\theoremstyle{plain}
\newtheorem{theorem}{Theorem}
\newtheorem{lemma}[theorem]{Lemma}
\newtheorem{proposition}[theorem]{Proposition}
\theoremstyle{definition}
\newtheorem{definition}{Definition}
\newcommand{\ck}{\mathsf{CK}}
\newcommand{\isf}{\mathsf{IS5}}
\newcommand{\muisf}{\mathsf{\mu IS5}}
\newcommand{\tuple}[1]{{\langle #1 \rangle}}
\newcommand{\verifier}{\mathsf{V}}
\newcommand{\refuter}{\mathsf{R}}
\newcommand{\role}{\mathsf{Q}}
\newcommand{\dualrole}{\bar{\mathsf{Q}}}
\newcommand{\sig}[1]{{\mathrm{sig}^\mathsf{I}\langle #1 \rangle}}
\newcommand{\sub}[1]{{\mathrm{Sub}(#1)}}
\title{Game semantics for the constructive $\mu$-calculus}
\author{Leonardo Pacheco \\
{\small Institute of Discrete Mathematics and Geometry, TU Wien, Austria.} \\
{\small \texttt{leonardovpacheco@gmail.com}}}
\date{}
\begin{document}
\maketitle
%%%%%%%%%%%%%%%%%%%%%%%%%%%%%%%%%%%%%%%%%%%%%%%%%%%%%%%%%%%%%%%

\begin{abstract}
    We define game semantics for the constructive $\mu$-calculus and prove its equivalence to bi-relational semantics.
    As an application, we use the game semantics to prove that the $\mu$-calculus collapses to modal logic over the modal logic $\isf$.
    We then show the completeness of $\isf$ extended with fixed-point operators.
\end{abstract}

%%%%%%%%%%%%%%%%%%%%%%%%%%%%%%%%%%%%%%%%%%%%%%%%%%%%%%%%%%%%%%%
\section{Introduction}
%%%%%%%%%%%%%%%%%%%%%%%%%%%%%%%%%%%%%%%%%%%%%%%%%%%%%%%%%%%%%%%

This paper is a first step into relating two strands of research modal logic: the modal $\mu$-calculus and constructive modal logics.
We define a constructive variant of the $\mu$-calculus by adding least and greatest fixed-point operators to constructive modal logic.
We define game semantics for the constructive $\mu$-calculus and prove its equivalence to bi-relational Kripke semantics.
We use then the game semantics to study an intuitionistic variant of the modal logic $\mathsf{S5}$ with fixed-point operators.
Before introducing our results, we briefly review the related literature on constructive modal logics and the $\mu$-calculus.

On constructive modal logics, the duality of the modalities $\Box$ and $\Diamond$ is lost.
These logics have been studied for a long time; some of the first texts on the topic are Fitch \cite{fitch1948intuitionistic} and Prawitz \cite{prawitz1965natural}.
In this paper, we use Mendler and de Paiva's $\ck$ bi-relational $\ck$-models \cite{mendler2005constructive}.
These models are based on those of Wijesekera \cite{wijesekera1990constructive}, but allow worlds where the false proposition $\bot$ holds.

The $\ck$-models are not the only semantics available for constructive modal logics.
Of note the semantics of Acclavio \emph{et al.} \cite{acclavio2021game}.
Acclavio \emph{et al.} provide complete denotational semantics for $\ck$ via game semantics.
Their games are canonical representations of proofs, not model checking games as the ones presented in this paper.
There are also categorical semantics \cite{alechina2001categorical} and realizability semantics \cite{kuznets2021justification}.

Furthermore, one should note that constructive modal logic is not the only non-classical variant of modal logic.
It can also be strengthened to intuitionistic and G\"odel modal logics.
On the axiomatic side, these logics are obtained by adding axioms to constructive modal logic.
On the semantics side, they are obtained by excluding fallible worlds and adding restrictions on $\ck$-models.
See \cite{das2023diamonds,degroot2024semantical} for more information on the relation between constructive and intuitionistic modal logic.

While models for intuitionistic modal logics can be seen as a particular type of $\ck$-models, constructive and intuitionistic variants of the same logic usually behave quite differently.
Of note is Das and Marin's \cite{das2023diamonds} paper which shows that the $\Diamond$-free fragment of $\ck$ and $\mathsf{IK}$ do not coincide: $\ck$ does not prove $\neg\neg\Box\bot \to \Box\bot$, while $\mathsf{IK}$ does.

The modal $\mu$-calculus was defined by Kozen \cite{kozen1983results}, who also defined a related proof system $\mathsf{\mu K}$.
The completeness of $\mathsf{\mu K}$ was first proved by Walukiewicz \cite{walukiewicz1995completeness}.
See \cite{lenzi2010recent,bradfield2018mucalculus} for surveys on the $\mu$-calculus.

The $\mu$-calculus' alternation hierarchy classifies the $\mu$-formulas by how many alternating least and greatest fixed-point operators they contain.
The strictness of the hierarchy was open for many years until it was proved by Bradfield \cite{bradfield1998strict}.
Bradfield later gave a simplified proof of the alternation hierarchy's strictness using  evaluation games \cite{bradfield1998simplifying}.
The strictness may not hold over restricted classes of models.
For example, Alberucci and Facchini \cite{alberucci2009modal} proved that the alternation hierarchy collapses to its alternation-free fragment over transitive models, and to modal logic over equivalence relations.
See Chapter 2 of \cite{pacheco2023exploring} for a survey on the alternation hierarchy.

The $\mu$-formulas are famously hard to understand.
One advantage of game semantics for the $\mu$-calculus over the standard Kripke semantics is that they give a more intuitive interpretation of the $\mu$-formulas.
Furthermore, evaluation games are also useful as a tool for proving theorems about the $\mu$-calculus.

In an evaluation game for the $\mu$-calculus, two players discuss whether a formula is true at a given world of a Kripke model.
In the classical version of the game, it is usual to refer to the players as Verifier and Refuter.
In the constructive version of the game, we will still have two players, but now they alternate between the roles of Verifier and Refuter, depending on their moves.
This difference happens because, over classical semantics, every formulas can be put in negative normal form; this allows us to simplify the evaluation games in the classical case.
In other words, we need to consider negation and implication in constructive semantics.
Therefore we will need a more delicate argument to prove the equivalence of the semantics in the constructive case.
Our proof is based on the proof of the correctness of game semantics for the classical $\mu$-calculus by Ong \cite{ong2015automata}.

Since our evaluation games build on the $\ck$-models of Mendler and de Paiva \cite{mendler2005constructive}, the game semantics can also be used for any logic whose semantics are based on (subsets of) $\ck$-models.
In particular, our game semantics can also be used to define semantics for an intuitionistic $\mu$-calculus, based on bi-relational models for the modal logic $\mathsf{IK}$.

As an application, we study the logic $\isf$, an intuitionistic variant of $\mathsf{S5}$ with fixed-points operators.
$\isf$ is also known as $\mathsf{MIPQ}$ and $\mathsf{MIPC}$, and was first studied by Prior \cite{prior1957time}.
The completeness of $\isf$ over $\isf$-models was proved by Ono \cite{ono1977intuitionistic} and Fischer Servi \cite{fischerservi1978finite}.
We use the game semantics to show that the constructive $\mu$-calculus collapses to constructive modal logic over $\isf$.
That is, every $\mu$-formula is equivalent to a formula without fixed-point operators over $\isf$-models.
Our proof is a generalization of Alberucci and Facchini's proof of the collapse of the (classical) $\mu$-calculus to (classical) modal logic over $\mathsf{S5}$-models \cite{alberucci2009modal}.
Finally, we use the $\mu$-calculus' collapse to modal logic over $\isf$ to prove the completeness of $\muisf$, the modal logic obtained by adding fixed-point axioms and rules to the modal logics $\isf$.
As far as the author is aware, these are the first completeness results for any logic over the constructive $\mu$-calculus.

At last, we note that a constructive variant $\mathsf{CS5}$ of $\mathsf{S5}$ was previously studied by Arisaka \emph{et al.} \cite{arisaka2015nested}, who defined and proved the correctness of a nested sequent calculus for $\mathsf{CS5}$.
The bi-relational semantics for this logic has not been studied yet in the literature, so the semantical methods we use to prove the collapse of the $\mu$-calculus over $\isf$ cannot be used for $\mathsf{CS5}$.

\paragraph{Outline}
In Section \ref{sec::preliminaries}, we define the syntax and bi-relational Kripke semantics for the constructive $\mu$-calculus, and review the modal logic $\isf$.
In Section \ref{sec::game-semantics}, we define the game semantics for the constructive $\mu$-calculus and prove its equivalence to Kripke semantics.
In Section \ref{sec::collapse}, we prove the constructive $\mu$-calculus' collapse to modal logic over $\isf$-models.
In Section \ref{sec::mu-is5}, we prove the completeness of $\muisf$.
In Section \ref{sec::future-work}, we point some directions for future work.

\paragraph{Acknowledgements}
I would like to thank David Fernández-Duque, Iris van der Giessen, and  Konstantinos Papafilippou for the discussions we had about constructive modal logics and the $\mu$-calculus.
I would also like to thank Thibaut Kouptchinsky for comments on an early draft and the anonymous reviewers for their comments.
The comments from the anonymous reviewers greatly improved this paper.
This research was partially funded by the FWF grant TAI-797.

%%%%%%%%%%%%%%%%%%%%%%%%%%%%%%%%%%%%%%%%%%%%%%%%%%%%%%%%%%%%%%%
\section{Constructive \texorpdfstring{$\mu$}{mu}-calculus}
\label{sec::preliminaries}
%%%%%%%%%%%%%%%%%%%%%%%%%%%%%%%%%%%%%%%%%%%%%%%%%%%%%%%%%%%%%%%
\paragraph{Syntax}
The language of the $\mu$-calculus is obtained by adding \emph{least} and \emph{greatest fixed-point operators} $\mu$ and $\nu$ to the language of modal logic.
When defining the fixed-point formulas $\mu X.\varphi$ and $\nu X.\varphi$, we have a syntactical requirement in order to have well-defined semantics.

Fix a set $\mathrm{Prop}$ of proposition symbols and a set $\mathrm{Var}$ of variable symbols.
The \emph{constructive $\mu$-formulas} are defined by the following grammar:
\[
    \varphi := P \mid X \mid \bot \mid \top \mid \neg \varphi \mid \varphi\land\varphi  \mid \varphi\lor\varphi \mid \varphi\to\varphi \mid \Box\varphi \mid \Diamond\varphi \mid  \mu X.\varphi \mid \nu X.\varphi,
\]
where $P$ is a proposition symbol, $X$ is a variable symbol.
A $\mu$-formula without fixed-point operators is called a \emph{modal formula}.
The fixed-point formulas $\mu X.\varphi$ and $\nu X.\varphi$ are defined iff $X$ is positive in $\varphi$.
\begin{definition}
    We classify $X$ as \emph{positive} or \emph{negative} in a given formula by structural induction:
    \begin{itemize}
        \item $X$ is positive and negative in $P$ and in $\bot$;
        \item $X$ is positive in $X$ ;
        \item if $Y\neq X$, $X$ is positive and negative in $Y$;
        \item if $X$ is positive (negative) in $\varphi$, then $X$ is negative (positive) in $\neg\varphi$;
        \item if $X$ is positive (negative) in $\varphi$ and $\psi$, then $X$ is positive (negative) in $\varphi\land \psi$, $\varphi\lor \psi$, $\Box\varphi$, and $\Diamond\varphi$;
        \item if $X$ is negative (positive) in $\varphi$ and positive (negative) in $\psi$, then $X$ is positive (negative) in $\varphi\to \psi$;
        \item $X$ is positive and negative in $\mu X.\varphi$ and $\nu X.\varphi$.
    \end{itemize}
\end{definition}

While burdensome, we need to consider the positiveness and negativeness of variables to guarantee that the semantics for fixed-points formulas are well-defined.
This contrasts with the classical $\mu$-calculus, where it is common to suppose every formula is in negative normal form, and so we can set up the grammar in a way only positive variables occur.
We cannot do the same on constructive semantics: for example, $\neg\Diamond\neg\varphi$ is not equivalent to $\Box\varphi$ over constructive semantics.

Denote the set of subformulas of $\varphi$ by $\sub{\varphi}$.
We we use $\eta$ to denote either $\mu$ or $\nu$, and $\triangle$ to denote $\Box$ or $\Diamond$.
An occurrence of a variable $X$ in a formula $\varphi$ is \emph{bound} iff it in the scope of a fixed-point operator $\eta X$.
An occurrence of $X$ is \emph{free} iff it is not bound.
We treat free occurrences of variable symbols as propositional symbols.
A formula $\varphi$ is \emph{closed} iff it has no free variables.
An occurrence of $X$ in $\varphi$ is \emph{guarded} iff it is in the scope of some modality $\triangle$.
A formula $\varphi$ is \emph{guarded} iff, for all $\eta X.\psi\in\sub{\varphi}$, $X$ is guarded in $\psi$.
A formula $\varphi$ is \emph{well-bounded} iff, for all variables $X$ occurring bounded in $\varphi$, $X$ occurs only once and there is only one fixed-point operator $\eta X$ in $\varphi$.
A formula is \emph{well-named} iff it is guarded and well-bounded.
Every formula is equivalent to a well-named formula.
If $\varphi$ is a well-named formula and $\eta X.\psi\in\sub{\psi}$, denote by $\psi_X$ the formula $\psi$ which is bound by the fixed-point operator $\eta X$.

\paragraph{Semantics}
We consider the bi-relational semantics defined by Mendler and de Paiva \cite{mendler2005constructive}.
\begin{definition}
    A $\ck$-model is a tuple $M=\tuple{W, W^\bot ,\preceq, R, V}$ where: $W$ is the set of \emph{possible worlds}; $W^\bot\subseteq W$ is the set of \emph{fallible worlds}; $\preceq$ is a reflexive and transitive relation over $W$; $R$ is a relation over $W$; and $V:\mathrm{Prop}\to \mathcal{P}(W)$ is a valuation function.
    We call $\preceq$ the \emph{intuitionistic relation} and $R$ the \emph{modal relation}.
    We require that, if $w \preceq v$ and $w\in V(P)$, then $v\in V(P)$; and that $W^\bot\subseteq V(P)$, for all $P\in\mathrm{Prop}$.
    We also require that if $w\in W^\bot$, then, if either $w\preceq v$ or $wRv$, then $v\in W^\bot$ too.
    For convenience, we sometimes write $w\succeq v$ iff $v\preceq w$.
\end{definition}

In Section \ref{sec::game-semantics}, we will consider models with augmented valuations when proving the correctness of game semantics.
When augmenting $M$, we treat some variable symbol $X$ as a proposition symbol and assign a value to it.
Formally, let $M = \tuple{W, W^\bot,\preceq, R,V}$ be a $\ck$-model, $A\subseteq W$ and $X$ be a variable symbol; the \emph{augmented $\ck$-model} $M[X\mapsto A]$ is obtained by setting $V(X) := A$.
Given any $\mu$-formula $\varphi$, we also define $\|\varphi(A)\|^M := \|\varphi(X)\|^{M[X\mapsto A]}$.

Fix a $\ck$-model $M = \tuple{W, W^\bot,\preceq, R,V}$.
Given a $\mu$-formula $\varphi$, define the operator $\Gamma_{\varphi(X)}(A) := \|\varphi(A)\|^M$.
Define the valuation of the $\mu$-formulas over $M$ by induction on the structure of the formulas:
\begin{itemize}
    \item $\|P\|^M = V(P)$;
    \item $\|\bot\|^M = W^\bot$;
    \item $\|\top\|^M = W$;
    \item $\|\varphi\land\psi\|^M = \|\psi\|^M \cap \|\psi\|^M$;
    \item $\|\varphi\lor\psi\|^M = \|\psi\|^M \cup \|\psi\|^M$;
    \item $\|\varphi\rightarrow\psi\|^M = \{w\mid \text{for all $v$, if } w\preceq v \text{, then } v\in \|\varphi\|^M \text{ implies } v\in\|\psi\|^M \}$;
    \item $\|\neg\varphi\|^M = \{ w\mid \text{for all $v$, if $w\preceq v$, then } v\not\models\varphi\}$;
    \item $\|\Box\varphi\|^M = \{ w \mid \text{for all $v$ and $w$, if } w \preceq v \text{ and } v R u \text{, then }u\in\|\varphi\|^M\}$;
    \item $\|\Diamond\varphi\|^M = \{ w \mid \text{for all $v$, if $w \preceq v$, then there is $v R u$ such that }u\in\|\varphi\|^M\}$;
    \item $\|\mu X.\varphi(X)\|^M$ is the least fixed-point of the operator $\Gamma_{\varphi(X)}$; and
    \item $\|\nu X.\varphi(X)\|^M$ is the greatest fixed-point of the operator $\Gamma_{\varphi(X)}$.
\end{itemize}
We also write $M,w\models\varphi$ when $w\in\|\varphi\|^M$.
We omit the reference to the model $M$ when it is clear from the context and write $w\in\|\varphi\|$ and $w\models\varphi$.

The Knaster--Tarski Theorem \cite{arnold2001rudiments} states that every monotone operator has least and greatest fixed-points.
The following proposition implies that the valuations of the fixed-point formulas $\mu X.\varphi$ and $\nu X.\varphi$ are well-defined.
\begin{proposition}
    \label{prop::monotoness_of_Gamma_varphi}
    Fix a $\ck$-model $M=\tuple{W, W^\bot,\preceq,R, V}$ and a formula $\varphi(X)$ with $X$ positive.
    Then the operator $\Gamma_{\varphi(X)}$ is monotone.
    Therefore the valuations of the fixed-point formulas $\mu X.\varphi$ and $\nu X.\varphi$ are well-defined.
\end{proposition}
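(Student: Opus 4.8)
The plan is to prove a statement strengthened to both polarities, by structural induction on $\varphi$: for every variable $X$, if $X$ is positive in $\varphi$ then $\Gamma_{\varphi(X)}$ is monotone, and if $X$ is negative in $\varphi$ then $\Gamma_{\varphi(X)}$ is \emph{antitone}, i.e.\ $A \subseteq B$ implies $\|\varphi(B)\|^M \subseteq \|\varphi(A)\|^M$. Strengthening to both polarities is essential, not cosmetic: the semantic clauses for $\neg$ and $\to$ invert the polarity of their (left) arguments, so a bare induction that only tracked monotonicity would fail to push through these clauses. The desired proposition is then the positive half of the induction.

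The base cases are immediate: for $P$, $\bot$, $\top$, and $Y \neq X$ the operator $\Gamma_{\varphi(X)}$ is constant in $A$, hence both monotone and antitone, while for $\varphi = X$ it is the identity, hence monotone. Conjunction and disjunction preserve polarity, so these clauses follow from the induction hypothesis together with the monotonicity of $\cap$ and $\cup$. For $\neg\psi$ with $X$ positive, the polarity definition makes $X$ negative in $\psi$, so by the induction hypothesis $A \subseteq B$ gives $\|\psi(B)\|^M \subseteq \|\psi(A)\|^M$; unwinding $\|\neg\psi\|^M = \{w \mid \forall v \succeq w,\ v \notin \|\psi\|^M\}$ then yields $\|\neg\psi(A)\|^M \subseteq \|\neg\psi(B)\|^M$, and the negative sub-case is symmetric. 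The implication clause is handled identically: for $X$ positive in $\psi \to \chi$ we have $X$ negative in $\psi$ and positive in $\chi$, and chasing a world $w$ through the clause for $\|\psi\to\chi\|^M$ using both halves of the induction hypothesis gives the inclusion. The modal clauses $\Box\psi$ and $\Diamond\psi$ preserve polarity, and the required inclusions follow by pushing $\|\psi(A)\|^M \subseteq \|\psi(B)\|^M$ through the quantifiers appearing in the definitions of $\|\Box\psi\|^M$ and $\|\Diamond\psi\|^M$.

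The case I expect to be the main obstacle is the nested fixed-point clause $\varphi = \eta Y.\psi$ with $Y \neq X$, where the polarity of $X$ is inherited from $\psi$. Here $\|\eta Y.\psi\|^{M[X\mapsto A]}$ is itself a least or greatest fixed point of the inner operator $B \mapsto \|\psi\|^{M[X\mapsto A][Y\mapsto B]}$; this inner operator is monotone in $B$ by the induction hypothesis applied to $\psi$ and the bound variable $Y$, which is positive in $\psi$ by the syntactic side condition on fixed-point formulas, so the inner fixed point exists by Knaster--Tarski and the valuation is well-defined. To get monotonicity in the parameter $A$, I would isolate the auxiliary fact that if $F(A,B)$ is monotone in $B$ and monotone in $A$, then $A \mapsto \mathrm{lfp}_B\,F(A,B)$ and $A \mapsto \mathrm{gfp}_B\,F(A,B)$ are monotone. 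For the least fixed point this uses the characterization of $\mathrm{lfp}$ as the least pre-fixed point: writing $g(A) = \mathrm{lfp}_B\,F(A,B)$ and assuming $A \subseteq A'$, we have $F(A,g(A')) \subseteq F(A',g(A')) = g(A')$, so $g(A')$ is a pre-fixed point of $F(A,\cdot)$ and minimality forces $g(A) \subseteq g(A')$; the greatest-fixed-point case is dual via the greatest post-fixed point, and the antitone variants (needed when $X$ is negative in $\psi$) are obtained by the same argument with the inclusion on $F$ reversed. Combining the inductive cases with this auxiliary fact establishes monotonicity of $\Gamma_{\varphi(X)}$ for positive $X$, which by Knaster--Tarski makes $\|\mu X.\varphi\|^M$ and $\|\nu X.\varphi\|^M$ well-defined.
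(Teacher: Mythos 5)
Your proposal is correct, and its core is the same as the paper's proof: a structural induction that tracks both polarities simultaneously (monotone for positive occurrences, antitone for negative ones), which is exactly what is needed to get through the $\neg$ and $\to$ clauses. The one genuine difference is your treatment of nested fixed points. The paper only addresses the case $\eta X.\varphi$, where the binder captures the very variable $X$ under consideration, and dismisses it as trivial because $X$ is then not free; it is silent on the case $\eta Y.\psi$ with $Y \neq X$ and $X$ free in $\psi$. You single this case out as the main obstacle and resolve it with the standard auxiliary fact that if $F(A,B)$ is monotone in $B$ and monotone (resp.\ antitone) in $A$, then $A \mapsto \mathrm{lfp}_B\,F(A,B)$ and $A \mapsto \mathrm{gfp}_B\,F(A,B)$ are monotone (resp.\ antitone), proved via the least pre-fixed point and greatest post-fixed point characterizations from Knaster--Tarski. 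This is a real improvement in completeness: your argument covers formulas such as $\nu Y.(X \land \Box Y)$ viewed as functions of $X$, which the paper's case analysis does not explicitly reach. What the paper's version buys in exchange is brevity, and nothing more; your extra case is the right way to close the induction.
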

\begin{proof}
    Fix a model $M$ and sets $A\subseteq B \subseteq W$.
    We prove that if $X$ is positive in $\varphi$ then $\|\varphi(A)\|\subseteq \|\varphi(B)\|$; and that if $X$ is negative in $\varphi$, then $\|\varphi(B)\|\subseteq \|\varphi(A)\|$.
    We will focus on the positive case, as the negative case is dual.

    The proof is by structural induction on the $\mu$-formulas.
    The cases of formulas of the form $P$, $X$, $Y$, $\varphi\land\psi$, and $\varphi\lor\psi$ follow by direct calculations.
    The case for formulas of the form $\eta X.\varphi$ is trivial, as $X$ is not free in $\eta X.\varphi$.

    We now prove the proposition for formulas of the form $\varphi\to\psi$.
    Suppose $X$ is positive in $\varphi\to \psi$, then $X$ is positive in $\psi$ and negative in $\varphi$.
    Therefore:
    \begin{align*}
      w\in\|(\varphi\to\psi)(A)\| \iff& \text{for all $v$, if $w \preceq v$ and }v\in \|\varphi(A)\|\text{, then } v\in\|\psi(A)\| \\
      \implies& \text{for all $v$, if $w \preceq v$ and }v\in \|\varphi(B)\|\text{, then } v\in\|\psi(B)\| \\
      \iff&w\in  \|(\varphi\to\psi)(B)\|.
    \end{align*}
    The case for formulas of the form $\neg\varphi$ is similar.

    Finally, we prove the proposition for formulas of the form $\Box\varphi$.
    Suppose $X$ is positive in $\Box\varphi$, then $X$ is positive in $\varphi$.
    Therefore:
    \begin{align*}
        w\in\|\Box\varphi(A)\| \iff& \text{for all $v, u$ such that $w\preceq v R u$, }u\in \|\varphi(A)\| \\
        \implies& \text{for all $v, u$ such that $w\preceq v R u$, }u\in \|\varphi(B)\| \\
        \iff& w\in \|\Box\varphi(B)\|.
    \end{align*}
    The proof for formulas of the form $\Diamond\varphi$ is similar.
\end{proof}

We will also need to consider the \emph{approximants} $\eta X^\alpha.\varphi$ of fixed-point formulas $\eta X.\varphi$, for all ordinal number $\alpha$.
Fix a $\ck$-model $M$ and a formula $\varphi(X)$ where $X$ is positive.
The $\alpha$th approximant $\mu X^\alpha.\varphi$ of $\mu X.\varphi$ is obtained by applying $\Gamma_\varphi(X)$ to $\emptyset$, $\alpha$ many times.
Similarly, the approximant $\nu X^\alpha.\varphi$ is obtained by applying $\Gamma_\varphi(X)$ to $W$, $\alpha$ many times.
Formally, we have:
\begin{definition}
    Given a $\ck$-model $M$, the approximants for $\mu X.\varphi$ and $\mu X.\varphi$ on $M$ are defined by:
    \begin{itemize}
    \item $\|\mu X^0.\varphi\|^M = \emptyset$, $\|\nu X^0.\varphi\|^M = W$;
    \item $\|\mu X^{\alpha+1}.\varphi\|^M = \|\varphi(\|\mu X^\alpha.\varphi\|^M)\|$, $\|\nu X^{\alpha+1}.\varphi\|^M = \|\varphi(\|\nu X^\alpha.\varphi\|^M)\|$, where $\alpha$ is any ordinal; and
    \item $\|\mu X^\lambda.\varphi\|^M = \bigcup_{\alpha<\lambda} \|\mu X^\alpha.\varphi\|^M$, $\|\nu X^\lambda.\varphi\|^M = \bigcap_{\alpha<\lambda} \|\nu X^\alpha.\varphi\|^M$, where $\lambda$ is a limit ordinal.
\end{itemize}
\end{definition}
Approximants will play a central role in the proof of the correctness of the game semantics.
Note that, for all $\ck$-model $M$ and $\mu$-formula $\varphi(X)$ with $X$ positive there are ordinal numbers $\alpha$ and $\beta$ such that $\|\mu X.\varphi\|^M = \|\mu X^\alpha.\varphi\|^M$ and $\|\nu X.\varphi\|^M = \|\nu X^\beta.\varphi\|^M$.

The $\ck$-models provide semantics for the basic constructive modal logic $\ck$.
The logic $\ck$ is the least set of formulas containing:
\begin{itemize}
    \item all intuitionistic tautologies;
    \item $K_\Box := \Box(\varphi\to\psi) \to (\Box\varphi \to \Box \psi)$;
    \item $K_\Diamond := \Box(\varphi\to\psi) \to (\Diamond\varphi \to \Diamond \psi)$;
\end{itemize}
and closed under necessitation and \emph{modus ponens}:
\[
    (\mathbf{Nec}) \; \frac{\varphi}{\Box\varphi} \;\;\;\;\;\text{ and }\;\;\;\;\;
    (\mathbf{MP}) \; \frac{\varphi \;\;\; \varphi\to\psi}{\psi}.
\]
The logic $\ck$ is a complete and sound axiomatization of the $\ck$-models:
\begin{theorem}[Mendler, de Paiva \cite{mendler2005constructive}]
    For all modal formula $\varphi$, $\ck$ proves $\varphi$ iff $\varphi$ is true at all $\ck$-models.
\end{theorem}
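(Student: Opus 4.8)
The plan is to treat the two directions separately: soundness by induction on $\ck$-derivations, and completeness by a canonical model construction adapted to the bi-relational setting. For soundness I would induct on the length of a $\ck$-proof. In the base case I must check that every axiom is true at every world of every $\ck$-model: the intuitionistic tautologies hold because the $\preceq$-fragment of the semantics is exactly intuitionistic Kripke semantics (the persistence requirement on $V$ supplies monotonicity), while $K_\Box$ and $K_\Diamond$ follow by directly unfolding the clauses for $\|\Box\varphi\|$ and $\|\Diamond\varphi\|$ and using transitivity of $\preceq$. For the inductive step I verify that $\mathbf{MP}$ and $\mathbf{Nec}$ preserve truth-at-all-worlds; necessitation is sound precisely because $w\models\Box\varphi$ is a universal quantification over the $R$-successors of the $\succeq$-successors of $w$, so $\varphi$ being true everywhere forces $\Box\varphi$ everywhere.

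For completeness I would argue contrapositively: assuming $\ck\not\vdash\varphi$, I build a $\ck$-model falsifying $\varphi$. The worlds of the canonical model are the prime theories of $\ck$ (deductively closed sets with the disjunction property), with the single theory consisting of all formulas serving as the unique fallible world; $\preceq$ is set inclusion and $V(P)=\{\Gamma : P\in\Gamma\}$. The decisive ingredient is the modal relation. Writing $\Box^{-}\Gamma=\{\psi : \Box\psi\in\Gamma\}$ and $\Diamond^{-}\Gamma=\{\psi : \Diamond\psi\in\Gamma\}$, the natural definition is $\Gamma R\Delta$ iff $\Box^{-}\Gamma\subseteq\Delta\subseteq\Diamond^{-}\Gamma$, so that the lower inclusion drives the $\Box$-case of the truth lemma and the upper inclusion the $\Diamond$-case. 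After checking that this really is a $\ck$-model---persistence of $V$ and hereditary closure of fallibility along $\preceq$ and $R$---I would prove the Truth Lemma $\Gamma\models\chi$ iff $\chi\in\Gamma$ by induction on $\chi$, using a Lindenbaum-style extension lemma (if $\Gamma\not\vdash\chi$ then some prime $\Delta\supseteq\Gamma$ omits $\chi$) for the propositional and implicational clauses.

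The hard part will be the two modal clauses of the Truth Lemma, exactly because $\Box$ and $\Diamond$ are not interdefinable in $\ck$. The upper bound $\Delta\subseteq\Diamond^{-}\Gamma$ makes the falsification of $\Diamond\varphi$ immediate---if $\Diamond\varphi\notin\Gamma$ then no $R$-successor of $\Gamma$ can contain $\varphi$---so the real work moves to constructing $R$-successors on demand. Concretely I must establish: (i) if $\Box\varphi\notin\Gamma$, there is a prime $\Delta$ with $\Box^{-}\Gamma\subseteq\Delta\subseteq\Diamond^{-}\Gamma$ and $\varphi\notin\Delta$; and (ii) if $\Diamond\varphi\in\Gamma$, there is a prime $\Delta$ with $\Box^{-}\Gamma\subseteq\Delta\subseteq\Diamond^{-}\Gamma$ and $\varphi\in\Delta$. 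Each is a constrained Lindenbaum extension that must respect both bounds simultaneously, and this is where $K_\Box$ and $K_\Diamond$ are indispensable: they are what guarantee that the consequences of $\Box^{-}\Gamma$ one is forced to add stay within $\Diamond^{-}\Gamma$, keeping the construction inside the prescribed interval. Pinning down $R$ so that both existence lemmas go through is the genuinely delicate point; once they are in place, taking a prime theory that contains every $\ck$-theorem but omits $\varphi$ yields a world falsifying $\varphi$, completing the argument.
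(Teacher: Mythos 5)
The paper itself does not prove this theorem---it imports it from Mendler and de Paiva---but your proposal can be judged on its own terms, and it has a genuine gap exactly at the point you flag as ``the genuinely delicate point.'' Your soundness half is fine (modulo checking the axioms at fallible worlds). The problem is that your existence lemma (ii) is not merely hard to establish for $\ck$: it is \emph{false} for the relation $\Gamma R\Delta \iff \Box^{-}\Gamma\subseteq\Delta\subseteq\Diamond^{-}\Gamma$. Consider the $\ck$-model with $W=\{w,v_1,v_2,u_1,u_2\}$, $W^\bot=\emptyset$, $\preceq$ the reflexive closure of $\{(w,v_1),(w,v_2)\}$, $R=\{(w,u_1),(v_1,u_1),(v_2,u_2)\}$, $V(P)=\{u_1\}$, $V(Q)=\{u_2\}$. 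Then $w\models\Diamond(P\lor Q)$ (each $\preceq$-successor of $w$ has an $R$-successor satisfying $P$ or $Q$), but $w\not\models\Diamond P$, $w\not\models\Diamond Q$, and $w\not\models\Diamond\bot$. The set $\Delta=\{\chi\mid w\models\chi\}$ is a prime theory in your sense: it is closed under $\mathbf{MP}$, contains every $\ck$-theorem by your soundness half, omits $\bot$, and has the disjunction property by the semantic clause for $\lor$. Now apply lemma (ii) to $\Diamond(P\lor Q)\in\Delta$: any prime $\Sigma$ with $P\lor Q\in\Sigma\subseteq\Diamond^{-}\Delta$ contains $P$ or $Q$ (by the disjunction property---and your unique fallible world contains both), which forces $\Diamond P\in\Delta$ or $\Diamond Q\in\Delta$, a contradiction. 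So $\Delta$ has no $R$-successor containing $P\lor Q$ at all, the Truth Lemma fails at $\Delta$, and your canonical model validates the non-theorem $\Diamond(P\lor Q)\to\Diamond P\lor\Diamond Q\lor\Diamond\bot$.

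The diagnosis is that $K_\Box$ and $K_\Diamond$ do exactly what you say---they keep the \emph{deductive closure} of $\Box^{-}\Delta\cup\{\varphi\}$ inside $\Diamond^{-}\Delta$---but they give no control over the \emph{priming} steps: when you must commit to a disjunct of some $\varphi_0\lor\psi_0$, staying under the upper bound $\Diamond^{-}\Delta$ requires precisely $DP$, and managing inconsistency requires $N$, neither of which $\ck$ proves (the paper stresses that $DP$ and $N$ are independent of $K_\Box$ constructively). This is not a repairable detail: your interval relation is the canonically correct one for logics \emph{with} $DP$ and $N$---it is literally the definition of $\equiv_c$ in this paper's canonical model for $\muisf$, whose confluence lemma leans on $DP$, $N$, $T$, and $5$---but for $\ck$ the completeness proof must be organized differently. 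In the cited Mendler--de Paiva/Wijesekera-style argument, the $\forall\exists$ clause for $\Diamond$ and a liberal use of fallible worlds are exploited so that the witness for $\Diamond\varphi$ at a theory $\Delta$ is \emph{not} required to lie below $\Diamond^{-}\Delta$; distinct $\preceq$-extensions of $\Delta$ then receive distinct witnesses, which is precisely how the canonical model avoids validating $DP$. Without restructuring your construction along those lines, the argument cannot go through.
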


\paragraph{The modal logic $\isf$}
We now introduce $\isf$, the basic modal logic for sections \ref{sec::collapse} and \ref{sec::mu-is5}.
$\isf$ was first studied by first studied by Prior \cite{prior1957time}.
We obtain $\isf$ by adding to $\ck$ the axioms:
\begin{itemize}
    \item $FS := (\Diamond \varphi \to \Box\psi) \to \Box(\varphi\to\psi)$;
    \item $DP := \Diamond (\varphi\lor\psi) \to \Diamond\varphi\lor\Diamond\psi$;
    \item $N := \neg \Diamond \bot$;
    \item $T := \Box\varphi\to \varphi \land \varphi\to \Diamond\varphi$;
    \item $4 := \Box\varphi\to \Box\Box\varphi \land \Diamond\Diamond\varphi\to \Diamond\varphi$; and
    \item $5 := \Diamond\varphi\to \Box\Diamond\varphi \land \Diamond\Box\varphi\to \Box\varphi$;
\end{itemize}
and taking the closuse under $\mathbf{Nec}$ and $\mathbf{MP}$.
Note that $K_\Diamond$, $FS$, $DP$, and $N$ are all equivalent to $K_\Box$ in the classical setting.
This is not the case in constructive modal logic; see \cite{das2023diamonds} and \cite{degroot2024semantical} for more information.

\begin{definition}
    A $\isf$ \emph{model} is a $\ck$-model $M=\tuple{W, W^\bot, \preceq, \equiv, V}$ where: $\equiv$ is an equivalence relation over $W$; $W^\bot = \emptyset$; and the relation $\equiv$ is forward and backward confluent.
    The relation $\equiv$ is forward confluent iff $w\equiv  v$ and $w\preceq w'$ implies there is $v'$ such that $v\preceq w' \equiv v'$.
    The relation $\equiv$ is backward confluent iff $w\equiv  v \preceq v'$ implies there is $w'$ such that $w\preceq w' \equiv v'$.
    Forward and backward confluence are illustrated in Figure \ref{figure::backward-confluency}.
    We denote the modal relation by $\equiv$ instead of $R$ to emphasize that it is an equivalence relation.
\end{definition}
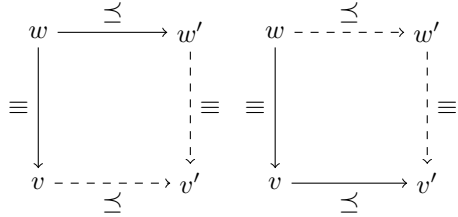
\begin{figure}[ht]
\centering
\tikzstyle{world}=[circle,draw,minimum size=5mm,inner sep=0pt]
\begin{tikzpicture}
    \node (w) at (0,0) {$w$};
    \node (w2) at (2,0) {$w'$};
    \node (v) at (0,-2) {$v$};
    \node (v2) at (2,-2) {$v'$};

    \draw[->] (w) -- (w2) node[midway,above] {$\preceq$};
    \draw[->] (w) -- (v) node[midway,left] {$\equiv$};
    \draw[dashed,->] (v) -- (v2) node[midway,below] {$\preceq$};
    \draw[dashed,->] (w2) -- (v2) node[midway,right] {$\equiv$};
\end{tikzpicture}
\begin{tikzpicture}
    \node (w) at (0,0) {$w$};
    \node (w2) at (2,0) {$w'$};
    \node (v) at (0,-2) {$v$};
    \node (v2) at (2,-2) {$v'$};

    \draw[dashed,->] (w) -- (w2) node[midway,above] {$\preceq$};
    \draw[->] (w) -- (v) node[midway,left] {$\equiv$};
    \draw[->] (v) -- (v2) node[midway,below] {$\preceq$};
    \draw[dashed,->] (w2) -- (v2) node[midway,right] {$\equiv$};
\end{tikzpicture}
\caption{Schematics for forward and backward confluence.}
\label{figure::backward-confluency}
\end{figure}

The modal logic $\isf$ is a complete and sound axiomatization of the $\isf$-models:
\begin{theorem}[Ono \cite{ono1977intuitionistic}, Fischer Servi \cite{fischerservi1978finite}]
    For all modal formula $\varphi$, $\isf$ proves $\varphi$ iff $\varphi$ is true at all $\isf$-models.
\end{theorem}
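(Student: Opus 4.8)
The plan is to treat soundness and completeness separately. Soundness is a routine induction on $\isf$-derivations: since every $\isf$-model is a $\ck$-model, the base logic is already sound by the Mendler--de Paiva theorem, and it suffices to check the six new axioms together with the fact that $\mathbf{Nec}$ and $\mathbf{MP}$ preserve validity. The axioms $T$, $4$, and $5$ are validated by $\equiv$ being reflexive, transitive, and symmetric; $N$ is validated by $W^\bot=\emptyset$, which forces $\|\bot\|=\emptyset$ and hence makes $\Diamond\bot$ false at every world; and $DP$ and, above all, $FS$ are validated by unwinding their semantic clauses through the forward and backward confluence conditions of Figure \ref{figure::backward-confluency}. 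I expect $FS$ to demand the most care, since its truth clause mixes the $\preceq$-monotone reading of $\Box$ with the $\preceq$-branching reading of $\Diamond$, and reconciling them requires a confluence step.

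For completeness I would argue contrapositively and build a canonical model. If $\isf\nvdash\varphi$, I take the worlds of the model to be the \emph{prime} $\isf$-theories (consistent, deductively closed sets with the disjunction property), with $\preceq$ interpreted as set inclusion. The valuation assigns to each $P$ the set of theories containing $P$; monotonicity under $\preceq$ is then immediate, and $W^\bot=\emptyset$ because no consistent theory contains $\bot$ (a fact underwritten by $N$). The modal relation $\equiv$ is defined so that $\Gamma\equiv\Delta$ holds exactly when every $\psi$ with $\Box\psi\in\Gamma$ lies in $\Delta$ and every $\psi\in\Delta$ has $\Diamond\psi\in\Gamma$, capturing the box-successor and diamond-predecessor behaviour simultaneously.

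The core of the argument is the Truth Lemma: for every prime theory $\Gamma$ and modal formula $\psi$, one has $\Gamma\models\psi$ iff $\psi\in\Gamma$, proved by induction on $\psi$. The propositional cases are the standard intuitionistic ones, with the disjunction case using primeness and the implication and negation cases using Lindenbaum-style extensions of theories. The modal cases rest on \emph{existence lemmas}: if $\Box\psi\notin\Gamma$ one must produce an $\equiv$-related prime theory omitting $\psi$, and if $\Diamond\psi\in\Gamma$ one must produce an $\equiv$-related prime theory containing $\psi$; both are obtained by extending an appropriate seed set to a prime theory, using $K_\Box$, $K_\Diamond$, $FS$, and $DP$ to check consistency. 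Applying the lemma to a prime theory that extends the $\isf$-theorems but avoids the underivable $\varphi$ yields the desired countermodel.

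It remains to verify that the canonical structure is a genuine $\isf$-model, and this is where I expect the main obstacle to lie. Using $T$, $4$, and $5$ one shows the canonical $\equiv$ is an equivalence relation, and the forward and backward confluence conditions relating $\subseteq$ and $\equiv$ must be extracted from $FS$ together with the interaction of the box-successor and diamond-predecessor clauses. The delicate point, which distinguishes this setting from classical $\mathsf{S5}$, is that the existence lemmas must simultaneously control box-successors and diamond-predecessors so that the witnesses are both prime and correctly $\equiv$-related, all while respecting the confluence diagrams of Figure \ref{figure::backward-confluency}; keeping these two requirements consistent during the Lindenbaum construction is the crux of the proof.
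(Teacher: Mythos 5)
Your proposal is correct and follows essentially the same route as the cited Ono/Fischer Servi proof and the paper's own analogous construction: soundness by checking each axiom against the bi-relational semantics, and completeness via a canonical model whose worlds are prime theories, with $\preceq$ as inclusion and $\Gamma\equiv\Delta$ iff $\Gamma^\Box\subseteq\Delta$ and $\Delta\subseteq\Gamma^\Diamond$. The paper itself does not reprove this theorem but builds exactly this canonical model in Section \ref{sec::mu-is5} for $\muisf$ --- same existence lemmas, same use of $T$, $4$, $5$ to show $\equiv_c$ is an equivalence, and backward confluence by a Zorn/Lindenbaum argument, with forward confluence then coming for free from Proposition \ref{prop::confluences-are-equivalent}.
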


The following lemma will be useful when proving the $\mu$-calculus' collapse over $\isf$-models:
\begin{lemma}
    \label{lem::transitiveness-is5}
    Let $M=\tuple{W, W^\bot, \preceq, \equiv, V}$ be an $\isf$-model.
    Then the composition $\preceq;\equiv$ is a transitive relation.
\end{lemma}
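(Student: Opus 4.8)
The plan is to unfold the definition of relational composition and then use \emph{forward confluence} (the left diagram of Figure \ref{figure::backward-confluency}) to commute an $\equiv$-step past a $\preceq$-step in the middle of a chain, after which the result follows by transitivity. Recall that $a \mathrel{(\preceq;\equiv)} b$ holds iff there is a world $p$ with $a \preceq p \equiv b$. So I would begin by assuming $a \mathrel{(\preceq;\equiv)} b$ and $b \mathrel{(\preceq;\equiv)} c$, and unfolding both hypotheses to obtain worlds $p, q$ fitting into the chain
\[
    a \preceq p \equiv b \preceq q \equiv c .
\]
The goal is to produce a single world $r$ with $a \preceq r \equiv c$, i.e. $a \mathrel{(\preceq;\equiv)} c$.

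The awkward segment is the middle portion $p \equiv b \preceq q$, where the intuitionistic and modal steps occur in the order $\equiv$ then $\preceq$, which does not telescope directly into a single $\preceq;\equiv$ witness. This is exactly where forward confluence applies: since $\equiv$ is symmetric we have $b \equiv p$, and together with $b \preceq q$ forward confluence yields a world $p'$ with $p \preceq p'$ and $q \equiv p'$. In effect this rewrites $p \equiv b \preceq q$ as $p \preceq p' \equiv q$, moving the $\preceq$-step to the front.

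From here the conclusion is routine bookkeeping. Combining $a \preceq p$ with $p \preceq p'$ and using transitivity of $\preceq$ gives $a \preceq p'$; combining $p' \equiv q$ with $q \equiv c$ and using that $\equiv$ is an equivalence relation gives $p' \equiv c$. Hence $a \preceq p' \equiv c$, so $p'$ is the desired witness $r$ and $a \mathrel{(\preceq;\equiv)} c$. The only step requiring any thought is recognising that forward confluence is precisely the tool that swaps the order of the middle $\equiv$ and $\preceq$; note in particular that backward confluence is not needed here. Everything else reduces to the transitivity of $\preceq$ and of $\equiv$.
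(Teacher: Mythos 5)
Your proof is correct and takes essentially the same approach as the paper's: both unfold the hypotheses into a chain $a \preceq p \equiv b \preceq q \equiv c$, apply a confluence property to the middle segment $p \equiv b \preceq q$ to swap the order of the steps, and finish by transitivity of $\preceq$ and $\equiv$. The only cosmetic difference is that the paper invokes backward confluence directly, whereas you invoke forward confluence after flipping $p \equiv b$ to $b \equiv p$ by symmetry --- an interchangeable move, since the two confluence properties coincide when the modal relation is an equivalence relation (Proposition \ref{prop::confluences-are-equivalent}).
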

\begin{proof}
    Suppose $w\preceq w'\equiv v \preceq v'\equiv u$.
    By backward confluence, there is $u'$ such that $w'\preceq u'\equiv v'$.
    By the transitivity of $\preceq$ and $\equiv$, $w\preceq u'\equiv u$.
\end{proof}

In general, forward confluence and backward confluence is are not equivalent over $\ck$-models; but if the modal relation is an equivalence relation, then they coincide.
\begin{proposition}
    \label{prop::confluences-are-equivalent}
    Let $M=\tuple{W, W^\bot, \preceq, \equiv, V}$ be a $\ck$-model where $\equiv$ is an equivalence relation over $W$.
    Then the modal relation $\equiv$ is forward confluent iff $\equiv$ is backward confluent.
\end{proposition}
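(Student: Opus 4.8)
The plan is to exploit the fact that, since $\equiv$ is an equivalence relation and hence symmetric, the two confluence diagrams in Figure~\ref{figure::backward-confluency} are reflections of one another: interchanging the two $\equiv$-related worlds turns a forward-confluence square into a backward-confluence square and vice versa. Consequently each implication follows from the other by a single application of symmetry, and in fact only the symmetry of $\equiv$ is used—reflexivity and transitivity play no role.

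First I would fix the reading of the two conditions from Figure~\ref{figure::backward-confluency}: forward confluence says that whenever $w\equiv v$ and $w\preceq w'$ there is a $v'$ with $v\preceq v'$ and $w'\equiv v'$; backward confluence says that whenever $w\equiv v$ and $v\preceq v'$ there is a $w'$ with $w\preceq w'$ and $w'\equiv v'$. To prove that forward confluence implies backward confluence, I would suppose the former holds and fix $w,v,v'$ with $w\equiv v$ and $v\preceq v'$. By symmetry of $\equiv$ we have $v\equiv w$, so I would apply forward confluence to the $\equiv$-pair $v\equiv w$ and the $\preceq$-step $v\preceq v'$; this yields a world $w'$ with $w\preceq w'$ and $v'\equiv w'$. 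A second use of symmetry gives $w'\equiv v'$, which is precisely the witness demanded by backward confluence.

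The converse is established by the identical argument with the hypotheses interchanged: given $w\equiv v$ and $w\preceq w'$, symmetry yields $v\equiv w$, and applying backward confluence to $v\equiv w$ together with $w\preceq w'$ returns a $v'$ with $v\preceq v'$ and $v'\equiv w'$, whence $w'\equiv v'$ again by symmetry, giving forward confluence. There is no genuine obstacle here; the only point requiring care is the bookkeeping of the variable roles, so that the single symmetric step is fed into the definition in the correct direction.
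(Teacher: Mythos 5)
Your proof is correct and is essentially the paper's own argument: both derive one confluence property from the other by applying it to the swapped pair $v\equiv w$ (using symmetry of $\equiv$), obtaining the required witness, and noting the converse direction is symmetric. Your additional observation that only symmetry of $\equiv$ is needed is accurate and consistent with the paper's proof.
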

\begin{proof}
    Suppose $\equiv$ is forward confluent.
    As $\equiv$ is an equivalence relation, $v\equiv w$ and $v\preceq v'$.
    By forward confluence, there is $w'$ such that $w\preceq w'$ and $v'\equiv w'$.
    Therefore $w \preceq w' \equiv v'$.
    The proof that backwards confluence implies forward confluence is similar.
\end{proof}

\paragraph{The fixed-point logic $\muisf$}
We obtain $\muisf$ by adding to $\isf$ the fixed-point axioms:
\begin{itemize}
    \item $\nu FP := \nu X.\varphi \to \varphi(\nu X.\varphi)$; and
    \item $\mu FP := \varphi(\mu X.\varphi) \to \mu X.\varphi$;
\end{itemize}
and taking the closure under $\mathbf{Nec}$, $\mathbf{MP}$ and the induction rules:
\[
    (\mathbf{\nu Ind}) \; \frac{\psi\to \varphi(\psi)}{\psi \to \nu X.\varphi} \;\;\;\;\;\text{ and }\;\;\;\;\;
    (\mathbf{\mu Ind}) \; \frac{\varphi(\psi) \to \psi}{\mu X.\varphi \to \psi},
\]
where $X$ is positive in $\varphi$.
Note that the two fixed-point axioms and the two induction rules are necessary as $\nu$ and $\mu$ cannot be defined in terms of each other over constructive semantics.
While, over classical semantics, $\nu X.\varphi$ is equivalent to $\neg \mu X.\neg\varphi(\neg X)$; we do not have the same in constructive semantics.
If $\varphi := P$, then $\nu X.\varphi$ is equivalent to $P$ and $\neg \mu X.\neg\varphi(\neg X)$ is equivalent to $\neg\neg P$; but $P$ and $\neg\neg P$ are not equivalent formulas.

%%%%%%%%%%%%%%%%%%%%%%%%%%%%%%%%%%%%%%%%%%%%%%%%%%%%%%%%%%%%%%%
\section{Game semantics for the constructive \texorpdfstring{$\mu$}{mu}-calculus}
\label{sec::game-semantics}
%%%%%%%%%%%%%%%%%%%%%%%%%%%%%%%%%%%%%%%%%%%%%%%%%%%%%%%%%%%%%%%
In this section, we define game semantics for the constructive $\mu$-calculus and prove its equivalence to the bi-relational semantics.
This game semantics is a modification of the game semantics of the classical $\mu$-calculus.
In the classical version, the players Verifier and Refuter discuss whether a formula $\varphi$ hods in a world $w$ of a Kripke model $M$.
While in the classical $\mu$-calculus we can suppose formulas use no implications and that negations are applied only to propositional symbols, we cannot do the same in the constructive $\mu$-calculus.
This complicates the games used for the constructive $\mu$-calculus: the players now have the \emph{roles} of Refuter and Verifier, and swap roles when discussing certain formulas.

\subsection{Definition}
Fix a $\ck$-model $M=\tuple{W,W^\bot,\preceq,R,V}$, a world $w\in W$, and a well-named $\mu$-formula $\varphi$.
In this subsection, we define the \emph{evaluation game} $\mathcal{G}(M,w\models\varphi)$.

The game $\mathcal{G}(M,w\models\varphi)$ has two players: $\mathsf{I}$ and $\mathsf{II}$.
The two players will have the roles of Verifier and Refuter (abbreviated to $\verifier$ and $\refuter$, respectively).
Each player has only one role at any given time, and the players always have different roles.
We usually write ``$\mathsf{I}$ is $\verifier$'' for ``the player in the role of $\verifier$'', and similar expressions for the other combination of players and games.
We denote an arbitrary role by $\role$ and the dual role by $\dualrole$; that is, if $\role$ is $\verifier$, then $\dualrole$ is $\refuter$ and \emph{vice versa}.

The game has two types of positions.
The main positions of the game are of the form $\tuple{v, \psi, \role}$ where $v\in W$, $\psi\in\sub{\varphi}$, and $\role$ is a role.
We also have auxiliary positions of the form $\tuple{\tuple{v}, \psi, \role}$ if $\Diamond\psi\in\sub{\varphi}$, $\tuple{[v], \psi, \role}$ if $\Box\psi\in\sub{\varphi}$, and $\tuple{v, \theta?\theta', \role}$ if $\theta\to\theta'\in\sub{\varphi}$, where $v\in W$.
In any position, $\role$ is the role currently held by $\mathsf{I}$; the role of $\mathsf{II}$ is $\dualrole$.
Intuitively, at a position $\tuple{v,\psi, \role}$, $\verifier$ tries to show that $v$ satisfies $\psi$ and $\refuter$ tries to prove that $v$ does not satisfy $\psi$.

These auxiliary positions are used to decompose the players moves at positions of the forms $\tuple{v,\Diamond\psi, \role}$, $\tuple{v,\Box\psi, \role}$, and $\tuple{v,\psi\to\theta, \role}$. For example, at the position $\tuple{v,\Diamond\psi, \role}$, $\refuter$ first makes a choice and then $\verifier$ does so.
The auxiliary positions make explicit that both players are involved in the choice of the next main position $\tuple{u,\psi, \role}$.
For the same reasoning, we also use auxiliary positions is also necessary for $\tuple{v,\theta\to\theta', \role}$.
We use auxiliary positions for $\tuple{v,\Box\psi, \role}$ for uniformity's sake.

The game begin at the position $\tuple{w,\varphi,\verifier}$, with $\mathsf{I}$ in the role of $\verifier$ and $\mathsf{II}$ in the role of $\refuter$.

Each position is owned by exactly one of the players.
At each turn of the game, the owner of the current position has chooses one of the available positions to move to.
The game then continues with the new position.
If no such position is available, the game ends.
We describe the ownership and possible plays for each type of position below; this information is summarized in Table \ref{table::intuitionistic-evaluation_game}.

\begin{table}[htb]\renewcommand\arraystretch{1.2}
    \caption{Rules of evaluation games for the constructive modal $\mu$-calculus.}
    \begin{center}
    \begin{tabular}{c|c}
    % \hline
    \multicolumn{2}{c}{Verifier}\\
    \hline
    Position &  Admissible moves\\

    $\tuple{v, \psi \lor \theta, \role}$ &  $\{\tuple{v, \psi, \role} , \tuple{v,\theta, \role} \}$\\

    $\tuple{v,\psi?\theta, \role}$ & $\{\tuple{v,\psi, \dualrole}, \tuple{v,\theta, \role}\}$ \\

    $\tuple{\tuple{v}, \psi, \role}$ & $\{ \tuple{u, \psi, \role} \mid vR u \}$ \\

    $\tuple{v,P, \role}$ and $v \not\in V(P)$  &  $\emptyset$ \\

    $\tuple{v,\mu X.\psi_X, \role}$ &  $\{\tuple{v,\psi_X, \role} \}$ \\

    $\tuple{v,X, \role}$ & $\{ \tuple{v,\mu X.\psi_X, \role} \}$ \\
    \end{tabular} \\
    \begin{tabular}{c|c}
    % \hline
    \multicolumn{2}{c}{Refuter}\\
    \hline
    Position &  Admissible moves\\

    $\tuple{v,\psi \land \theta, \role}$ &  $\{ \tuple{v,\psi, \role} , \tuple{v,\theta, \role} \}$ \\

    $\tuple{v,\neg\psi, \role}$ & $\{\tuple{u,\psi, \dualrole} \mid v\preceq v\}$ \\
    $\tuple{v,\psi\to\theta, \role}$ & $\{\tuple{u,\psi?\theta, \role} \mid v\preceq v\}$ \\

    $\tuple{v, \Diamond \psi, \role}$ & $\{ \tuple{\tuple{u}, \psi, \role} \mid v\preceq u \}$ \\

    $\tuple{v, \Box \psi, \role}$ & $\{ \tuple{[u], \psi, \role} \mid v\preceq u\}$ \\

    $\tuple{[v],\psi, \role}$ & $\{ \tuple{u, \psi, \role} \mid vR u \}$ \\

    $\tuple{v,P, \role}$ and $v \in V(P)$  &  $\emptyset$ \\

    $\tuple{v,\nu X.\psi_X, \role}$ &  $\{\tuple{v,\psi_X}, \role\}$ \\

    $\tuple{v,X, \role}$ & $\{ \tuple{v,\nu X.\psi_X, \role} \}$ \\

    $\tuple{v,\psi, \role}$, $v\in W^\bot$ and $\psi\in\sub{\varphi}$ & $\emptyset$ \\
    \end{tabular}
    \end{center}
    \label{table::intuitionistic-evaluation_game}
\end{table}

If $v\in W^\bot$ and $\psi\in \sub{\varphi}$, then the position $\tuple{v,\psi, \role}$ is owned by $\refuter$ and there is no available move.
Below, we suppose the world $v$ in the position $\tuple{v,\psi,\role}$ being described is not in $W^\bot$.

At the position $\tuple{v,P, \role}$ there is no available move and the game ends.
This position is owned by $\refuter$ if $v\models P$ and by $\verifier$ if $v\not\models P$.

The position $\tuple{v, \psi\lor\theta, \role}$ is owned by $\verifier$, who chooses one of $\tuple{v,\psi, \role}$ and $\tuple{v,\theta, \role}$.
Similarly, at $\tuple{v, \psi\land\theta, \role}$ is owned by $\refuter$, who chooses one of $\tuple{v,\psi, \role}$ and $\tuple{v,\theta, \role}$.

The position of the form $\tuple{v,\Box\psi, \role}$ is owned by $\refuter$, who chooses $v'$ such that $v\preceq v'$, and then move to the position $\tuple{[v'],\psi, \role}$; the position $\tuple{[v],\psi, \role}$ is again owned by $\refuter$, who chooses $v''$ such that $v'R v''$ and moves to $\tuple{v'',\psi, \role}$.
Similarly, the position $\tuple{v,\Diamond\psi, \role}$ is owned by $\refuter$ chooses $v'$ such that $v\preceq v'$, and then move the position $\tuple{\tuple{v'},\psi, \role}$; the position $\tuple{\tuple{v'},\psi, \role}$ is owned by $\verifier$ chooses $v''$ such that $v'R v''$ and moves to $\tuple{v'',\psi, \role}$.

At a position of the form $\tuple{v,\neg\psi, \role}$, $\refuter$ chooses $v'\succeq v$ and challenges $\verifier$ to show that $M,v'\not\models\varphi$; that is, $\refuter$ moves to $\tuple{v',\psi, \dualrole}$.
Positions of the form $\tuple{v,\psi\to\theta, \role}$ is similar.
In this case, $\refuter$ chooses $v'\succeq v$ and moves to $\tuple{v',\psi?\theta, \role}$, and then $\verifier$ chooses one of $\tuple{v',\psi, \dualrole}$ and $\tuple{v',\theta, \role}$.
That is, $\refuter$ chooses $v'\succeq v$ and $\verifier$ chooses whether to show that $M,v'\not\models \psi$ or $M,v'\models \theta$; in case $\verifier$ chooses $\tuple{v',\psi, \dualrole}$, the players exchange roles.

Let $\eta X.\psi_X\in \sub{\varphi}$; at the positions $\tuple{v,\eta X.\psi_X, \role}$ and $\tuple{v,X, \role}$ are owned by $\verifier$ if $\eta$ is $\nu$ and by $\refuter$ if $\eta$ is $\mu$; the only available position to move to is $\tuple{v,\psi_X, \role}$.
When moving from $\tuple{v,X, \role}$ to $\tuple{v,\nu X.\psi_X, \role}$, we say that the fixed-point formula $\eta X.\psi$ was \emph{regenerated}.

A \emph{run} of the game is a sequence of positions which respects the rules above.
That is a run $\rho$ is a (finite or infinite) sequence $\tuple{v_0,\psi_0,\role_0}, \tuple{v_1,\psi_1,\role_1}, \tuple{v_2,\psi_2,\role_2}, \dots$ of position such that:
\begin{itemize}
    \item $\tuple{v_0,\psi_0,\role_0}$ is $\tuple{w,\varphi,\verifier}$;
    \item it is possible to play $\tuple{v_{n+1},\psi_{n+1},\role_{n+1}}$ from $\tuple{v_n,\psi_n,\role_n}$; and
    \item if $\rho$ is finite, then the last position in $\rho$ is of the form $\tuple{v,P,\role}$ with $P\in\mathrm{Prop}$ or $\tuple{v,\psi,\role}$ with $v\in W^\bot$.
\end{itemize}
Before defining the winning conditions, we note that the positivity requirement on the fixed-point formulas guarantees that, if $\tuple{v,\eta X.\psi_X, \role}$ and $\tuple{v',\eta X.\psi_X, \role'}$ occur in any run of the game, then $\mathsf{I}$ has the same role in both positions:
\begin{proposition}
    Let $\rho$ and $\rho'$ be a runs of the game $\mathcal{G}(M,w\models\varphi)$.
    Suppose $\tuple{v,\eta X. \psi_X, \role}$ occurs in $\rho$ and if $\tuple{v',\eta X. \psi_X, \role'}$ occurs in $\rho'$.
    Then $\role = \role'$; that is, $\mathsf{I}$ has the same role at both positions.
\end{proposition}
\begin{proof}
    If $\tuple{v,\eta X. \psi_X, \role}$ and $\tuple{v',\eta X. \psi_X, \role'}$ occur in the same run $\rho$, then $\role$ and $\role'$ coincide by the positivity of $X$ in $\psi_X$: it implies that the players must swap roles an even number of times between these two positions.

    Now, let $\tuple{v,\eta X. \psi_X, \role}$ and $\tuple{v',\eta X. \psi_X, \role'}$ be the first occurrence of positions with the formula $\eta X.\psi_X$ in $\rho$ and $\rho'$, respectively.
    The well-namedness of $\varphi$ implies that there is only one occurrence of $\eta X$ in $\varphi$.
    This fact along with the positivity of $X$ implies that the number of times the players switch roles to get to $\tuple{v,\eta X. \psi_X, \role}$ and $\tuple{v',\eta X. \psi_X, \role'}$ must have the same parity.
\end{proof}
We say that the fixed-point formula \emph{$\eta X.\psi_X$ is owned by $\mathsf{I}$} if a position of the form $\tuple{v,\eta X.\psi_X,\role}$ is reachable from the initial position either $\role = \verifier$ and $\eta = \nu$, or if $\role = \refuter$ and $\eta = \mu$.
The fixed-point formula $\eta X.\psi_X$ is \emph{owned by $\mathsf{II}$} if it is not owned by $\mathsf{I}$.
This is well-defined by the above proposition.

We are now ready to define the winning conditions for the game.
Let $\rho$ be a run of the game.
If $\rho$ is finite, then the last position in $\rho$ is of the form $\tuple{v,P,\role}$ with $P\in\mathrm{Prop}$ or $\tuple{v,\psi,\role}$ with $v\in W^\bot$.
The owner of the last position has no available position and loses the game.
If $\rho$ is infinite, let $\eta X.\psi_{X}$ outermost infinitely often regenerated fixed-point formula in the play $\rho$; that is, $\eta X$ is regenerated infinitely often in $\rho$ and, if $\eta' Y$ is regenerated infinitely often in $\rho$, then $\eta' Y.\psi_Y\in\mathrm{Sub}(\eta X.\psi_X)$.
Then $\mathsf{I}$ wins $\rho$ iff $\mathsf{I}$ owns the fixed-point $\eta X$.

A \emph{(positional) strategy} for $\mathsf{I}$ is a function $\sigma$ which, given a position $\tuple{v,\psi, \role}$ owned by $\mathsf{I}$, outputs a position $\tuple{v',\psi', \role}$ where $\mathsf{I}$ can move to, if any such position is available.
$\mathsf{I}$ follows $\sigma$ in the run $\rho = \tuple{v_0,\psi_0,\role_0}, \tuple{v_1,\psi_1,\role_1}, \tuple{v_2,\psi_2,\role_2}, \dots$ if whenever $\tuple{v_n}, \psi_n, \role_n$ is owned by $\mathsf{I}$, then $\tuple{v_{n+1}}, \psi_{n+1}, \role_{n+1} = \sigma(\tuple{v_n}, \psi_n, \role_n)$.
The strategy $\sigma$ is \emph{winning} iff $\mathsf{I}$ wins all possible runs where they follow $\sigma$.
Strategies and winning strategies for $\mathsf{II}$ are defined similarly.

Note that at most one of the player can have a winning strategy for a given evaluation game.
It is not immediate that that one of the players has a winning strategy for a given evaluation game.
The existence of winning strategies is implied by our proof of the equivalence of the bi-relational Kripke semantics and game semantics for the constructive $\mu$-calculus:
\begin{theorem}
    Fix a $\ck$-model $M=\tuple{W,W^\bot,\preceq,R,V}$, a world $w\in W$, and a well-named $\mu$-formula $\varphi$.
    Let $\mathcal{G}(M,w\models\varphi)$ be an evaluation game.
    Then only one $\mathsf{I}$ and $\mathsf{II}$ has a positional winning strategy $\mathcal{G}(M,w\models\varphi)$.
\end{theorem}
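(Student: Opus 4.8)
The plan is to prove the sharper statement that $\mathsf{I}$ has a positional winning strategy in $\mathcal{G}(M,w\models\varphi)$ if and only if $w\in\|\varphi\|^M$, and dually that $\mathsf{II}$ has one if and only if $w\notin\|\varphi\|^M$. Since in the (classical) metatheory exactly one of $w\in\|\varphi\|^M$ and $w\notin\|\varphi\|^M$ holds, this yields at once both existence and uniqueness of the winning player. The ``at most one'' half is immediate: if $\sigma$ were winning for $\mathsf{I}$ and $\tau$ winning for $\mathsf{II}$, the single run in which both players follow their strategies would be won by both, a contradiction. So the real work is the two implications ``$w\models\varphi$ implies $\mathsf{I}$ wins'' and ``$w\not\models\varphi$ implies $\mathsf{II}$ wins'', and by the evident duality of the game (exchanging the two players and the two roles, and replacing truth by falsity) it suffices to treat the first.

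First I would isolate the right invariant. Call a main position $\tuple{v,\psi,\role}$ \emph{sound} if $v\models\psi$ when $\role=\verifier$ and $v\not\models\psi$ when $\role=\refuter$, with the analogous reading for the auxiliary positions. A routine case analysis over the position types, using exactly the semantic clauses for $\land,\lor,\to,\neg,\Box,\Diamond$ together with the conventions deciding atoms and fallible worlds, shows that soundness is preserved: from a sound position that $\mathsf{I}$ owns, $\mathsf{I}$ can move to a sound position, while from a sound position that $\mathsf{II}$ owns \emph{every} legal move lands in a sound position. The $\lor$, $\Box$ and $\Diamond$ cases use the existential/universal shape of the clauses; the $\neg$ and $\to$ cases are where the roles swap, and there the clause quantifying over $v'\succeq v$ matches precisely the ownership of the $\preceq$-choice and the use of $\dualrole$. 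Moreover a sound terminal position is always losing for its owner, so if $\mathsf{I}$ plays to keep every reached position sound then $\mathsf{I}$ wins every finite run; and the initial position $\tuple{w,\varphi,\verifier}$ is sound exactly because $w\models\varphi$.

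The main obstacle is the infinite plays, where soundness alone does not force the parity winning condition, and here I would adapt the signature argument of Ong (and of Bradfield--Stirling) to the role-swapping setting. The fixed points \emph{owned by} $\mathsf{II}$ are exactly the $\mu X.\psi_X$ met at a $\verifier$-position and the $\nu X.\psi_X$ met at a $\refuter$-position; by the preceding proposition the owner of a fixed point is well defined along any run. To each sound position I attach a signature $\sig{v,\psi,\role}$, a tuple of ordinals indexed by the $\mathsf{II}$-owned fixed points of $\varphi$ in order of nesting, whose entry for a $\mu X$ (met as Verifier) is the least $\alpha$ with $v\in\|\mu X^{\alpha}.\psi_X\|^M$ and whose entry for a $\nu X$ (met as Refuter) is the least $\alpha$ with $v\notin\|\nu X^{\alpha}.\psi_X\|^M$; these ordinals exist by the approximant characterization of the fixed points. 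I would then refine the invariant-preserving strategy so that, in addition to keeping positions sound, $\mathsf{I}$ always selects a successor whose signature is lexicographically least, and verify that this choice never increases the signature on outer entries and strictly decreases it whenever an $\mathsf{II}$-owned fixed point is regenerated. Because the lexicographic order on these ordinal tuples is well founded, no $\mathsf{II}$-owned fixed point can be the outermost infinitely-regenerated formula of an infinite run; hence on every infinite play the outermost infinitely-regenerated fixed point is owned by $\mathsf{I}$, and $\mathsf{I}$ wins.

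Finally, positionality: every choice prescribed above depends only on the current position $\tuple{v,\psi,\role}$, since soundness is a property of $v$ and $\psi$ in $M$ and the signature entries are the approximant levels at $v$, which are likewise fixed by the model and the well-named syntax; so the resulting strategy is positional. The dual construction, swapping $\mathsf{I}\leftrightarrow\mathsf{II}$ and $\verifier\leftrightarrow\refuter$, gives $\mathsf{II}$ a positional winning strategy whenever $w\not\models\varphi$. Combining the two implications with uniqueness yields that exactly one of $\mathsf{I}$ and $\mathsf{II}$ has a positional winning strategy, as claimed. I expect the delicate point to be the bookkeeping that the signature strictly decreases at regenerations of $\mathsf{II}$-owned fixed points while being insensitive to the role swaps introduced by $\neg$ and $\to$; the proposition identifying the owner of each fixed point is exactly what keeps this bookkeeping consistent.
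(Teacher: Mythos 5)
Your proposal is correct and takes essentially the same approach as the paper: the paper also derives the claim by combining the ``at most one winner'' argument (playing the two putative strategies against each other) with the classical dichotomy $M,w\models\varphi$ or $M,w\not\models\varphi$, except that where you inline a soundness-invariant-plus-signature proof of the correspondence between truth and winning strategies, the paper simply invokes its correctness theorem (Theorem \ref{thm::corectness-game-semantics}), whose proof is the same Ong-style argument you sketch. If anything, your bookkeeping is the cleaner orientation: you index $\mathsf{I}$'s signatures by the $\mathsf{II}$-owned fixed points (the ones dangerous for $\mathsf{I}$), which is what the decreasing-signature contradiction actually requires.
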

\begin{proof}
    It is immediate that not both $\mathsf{I}$ and $\mathsf{II}$ have winning strategies for $\mathcal{G}(M,w\models\varphi)$.
    For a contradiction, suppose $\sigma$ is a winning strategy for $\mathsf{I}$ and $\tau$ is a winning strategy for $\mathsf{II}$.
    Then the play resulting from the players using $\sigma$ and $\tau$ is winning for both $\mathsf{I}$ and $\mathsf{II}$, which is not possible by the definition of the game.

    Now, by the definition of the bi-relational semantics, either $M,w\models\varphi$ or $M,w\not\models\varphi$.
    Theorem \ref{thm::corectness-game-semantics} provides us strategies in both cases.
    In case $M,w\models\varphi$ holds, $\mathsf{I}$ has a winning strategy; in case $M,w\not\models\varphi$ holds, $\mathsf{II}$ has a winning strategy.
\end{proof}
Note that the existence of positional strategies for the evaluation games is not a trivial fact.
A key fact in the existence of such strategies is that, in order to determine the winner of an infinite run $\rho$, we need to look only at its tails; the exclusion of any initial segment of the run does not alter the result.
Another way of proving the existence of winning strategies for evaluation games is to represent them as parity games; the existence of positional winning strategies for parity games was proved by Emerson and Jutla \cite{emerson1991tree}.
The relation between the $\mu$-calculus and parity games is outside the scope of this paper, see \cite{gradel2003automata} for more information.

\subsection{Correctness of game semantics}
We now show the equivalence between the $\mu$-calculus' bi-relational semantics and game semantics.
That is, we will show that $M,w\models\varphi$ iff the player $\mathsf{I}$ has a winning strategy for $\mathcal{G}(M,w\models\varphi)$, and that $M,w\not\models\varphi$ iff the player $\mathsf{II}$ has a winning strategy for $\mathcal{G}(M,w\models\varphi)$.
Before proving it, we briefly sketch the key idea and remark on some technical points.

Fix an evaluation game $\mathcal{G}(M,w\models\varphi)$.
Call a position $\tuple{v,\psi, \role}$ \emph{true} iff $M,w\models \psi$, and \emph{false} iff $M,w\not\models\psi$.
We will show that, at a true positions, $\verifier$ can always move in a way favorable to themselves.
That is, in a way such that the resulting position is a true position, if the players have not switched roles; or the resulting position is a false position, if the players have switched roles.
On the other hand, $\refuter$ cannot move in any way favorable to themselves.
A similar situation occurs at false positions.
To make the statements above precise, we have to overcome two problems.

First, when considering whether $M, v\models\psi$, the formula $\psi$ might have free variables, and so its valuation might not be well-defined.
We solve this by augmenting $M$ with the intended valuations for the variables occurring in $\varphi$.

Second, we need to consider infinite plays.
Specifically, we need to guarantee that, when starting from a true position, the resulting play is winning for $\mathsf{I}$; and when starting from a false position, the resulting play is winning for $\mathsf{II}$.
To solve this, we assign two types of signatures to each position $\tuple{v,\psi, \role}$.
We will show that the players can play such that, if we start from a true position, the $\mathsf{I}$-signatures are non-increasing and eventually constant; and if we start from a false position, the $\mathsf{II}$-signatures are non-increasing and eventually constant.
This will guarantee the resulting plays are winning for the corresponding players.

\begin{theorem}
    \label{thm::corectness-game-semantics}
    Let $M=\tuple{W, W^\bot,\preceq,R,V}$ be a $\ck$-model, $w\in W$ and $\varphi$ be a well-named $\mu$-formula.
    Then
    \begin{align*}
        \mathsf{I} \text{ has a winning strategy for } \mathcal{G}(M,w\models\varphi) &\text{ if and only if } M,w\models\varphi\text{, and } \\
        \mathsf{II} \text{ has a winning strategy for } \mathcal{G}(M,w\models\varphi) &\text{ if and only if } M,w\not\models\varphi.
    \end{align*}
\end{theorem}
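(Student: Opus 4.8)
The plan is to prove both biconditionals simultaneously by establishing the two directions that together yield determinacy-with-correctness: if $M,w\models\varphi$ then $\mathsf{I}$ has a winning strategy, and if $M,w\not\models\varphi$ then $\mathsf{II}$ has one. Since at most one player can win any given game, these two implications already give all four implications in the statement (the converses follow by contraposition: if $\mathsf{I}$ wins then $M,w\not\models\varphi$ is impossible, so $M,w\models\varphi$, and dually). So the real work is producing winning strategies from semantic truth, following the approach outlined in the discussion preceding the theorem and adapting Ong's signature argument \cite{ong2015automata} to the constructive setting.

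First I would fix the augmented model $M[X\mapsto \|\eta X.\psi_X\|^M]$ for each bound variable $X$ occurring in $\varphi$, so that every subformula $\psi\in\sub{\varphi}$ has a well-defined valuation; this resolves the free-variable problem flagged in the preamble. Using this, I declare a main position $\tuple{v,\psi,\role}$ to be \emph{true} if $M,v\models\psi$ and \emph{false} otherwise, and I define the $\mathsf{I}$-signature and $\mathsf{II}$-signature of a position. The signature records, for each $\mu$-subformula currently in play (for the $\mathsf{I}$-signature) respectively each $\nu$-subformula (for the $\mathsf{II}$-signature), the least ordinal approximant stage $\alpha$ witnessing membership, ordered lexicographically by the alternation/subformula nesting. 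The central lemma is a local one-step claim: from any true position, the relevant player can move (or respond to the opponent's move) so that the successor is again true when roles are preserved and false when roles are swapped, and moreover the $\mathsf{I}$-signature does not increase, strictly decreasing whenever a $\mu$-formula is regenerated. I would verify this claim by a case analysis over the position types in Table \ref{table::intuitionistic-evaluation_game}, reading each clause of the semantics: for $\lor,\land$ and the modal cases the argument mirrors the classical one, while the novel cases are $\neg\psi$ and $\psi\to\theta$, where the role swap must be handled. Concretely, at a true position $\tuple{v,\psi\to\theta,\role}$ the semantics says for every $v'\succeq v$, if $v'\models\psi$ then $v'\models\theta$; so when $\refuter$ picks $v'$ and $\verifier$ faces $\tuple{v',\psi?\theta,\role}$, if $v'\not\models\psi$ then $\verifier$ moves to the role-swapped $\tuple{v',\psi,\dualrole}$ which is a \emph{false} position (as desired under the swap), and otherwise $v'\models\theta$ and $\verifier$ moves to the true $\tuple{v',\theta,\role}$. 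The case $\neg\psi$ is analogous and simpler, and the fallible-world and atomic cases are immediate from the ownership rules and the constraint $W^\bot\subseteq V(P)$.

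With the one-step lemma in hand, the global strategy for $\mathsf{I}$ starting from a true initial position $\tuple{w,\varphi,\verifier}$ is to always play a signature-non-increasing move; maintaining truth-under-role-tracking as an invariant. For finite runs this forces the opponent to own the terminal position and lose. For infinite runs I would argue that the outermost infinitely-often-regenerated fixed point cannot be a $\mu$-formula owned by $\mathsf{II}$: if it were, the $\mathsf{I}$-signature component for that $\mu$-formula would strictly decrease infinitely often while never increasing, contradicting well-foundedness of ordinals. Hence the outermost infinitely-regenerated fixed point is owned by $\mathsf{I}$, so $\mathsf{I}$ wins. The symmetric construction with the $\mathsf{II}$-signature handles the case $M,w\not\models\varphi$, using the dual reading of the semantics (at a false position it is $\refuter$ who can always move favorably).

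The main obstacle I anticipate is pinning down the signature bookkeeping so that the role-swapping cases of $\neg$ and $\to$ interact correctly with the ordering of approximant stages. Because a swap turns a true position into a false one and vice versa, the single $\mathsf{I}$-signature cannot simply track $\mu$-approximants across a swap without justification; the key observation that rescues this is the proposition already proved in the excerpt, namely that $\mathsf{I}$ holds a fixed role at every occurrence of a given fixed-point formula $\eta X.\psi_X$ across all runs, so ownership of each fixed point is swap-invariant and the winning condition depends only on which player owns the outermost infinitely-regenerated formula. I would therefore phrase the invariant in terms of \emph{favorable} positions (true when $\mathsf{I}$'s cumulative role parity is even, false when odd) rather than raw truth, so that a single signature decreases monotonically through swaps, and confirm that regeneration of a $\mu$-formula always forces a strict drop regardless of the current role parity. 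Getting this invariant stated cleanly, so that the limit-ordinal clauses of the approximant definition feed correctly into the non-increase argument, is the delicate technical heart of the proof.
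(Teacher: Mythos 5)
Your overall architecture is the same as the paper's: reduce the four implications to the two positive ones (truth yields a winning strategy for $\mathsf{I}$, falsity one for $\mathsf{II}$, and mutual exclusion gives the converses), augment the model so that subformulas with free variables have well-defined valuations, call positions true or false accordingly, attach lexicographically ordered ordinal signatures built from least approximant stages, and combine a one-step case analysis (your treatment of $\neg$ and $\to$ is exactly the paper's) with a well-foundedness argument for infinite runs. The genuine gap is in your signature bookkeeping, and it breaks the infinite-run argument. You index the $\mathsf{I}$-signature by the $\mu$-subformulas of $\varphi$ (and the $\mathsf{II}$-signature by the $\nu$-subformulas). In the constructive game this is the wrong index set: because of role swaps, the fixed points $\mathsf{I}$ must control are those owned by $\mathsf{II}$, namely the $\mu$-subformulas reached when $\mathsf{I}$ holds the role $\verifier$ \emph{and} the $\nu$-subformulas reached when $\mathsf{I}$ holds the role $\refuter$. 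Your infinite-run argument rules out only the first kind (``cannot be a $\mu$-formula owned by $\mathsf{II}$'') and then concludes ``hence the outermost infinitely-regenerated fixed point is owned by $\mathsf{I}$''; that inference is invalid, because a $\nu$-formula owned by $\mathsf{II}$ could be the one regenerated infinitely often. Concretely, take $\varphi=\neg\,\nu X.(P\land\Diamond X)$ true at $w$ (say $W=\{w,z\}$, $\preceq$ the identity, $wRz$, $zRz$, $V(P)=\{w\}$): after $\refuter$ moves past the negation the players swap, and $\mathsf{II}$, now $\verifier$, sits at a false $\nu$-position that $\mathsf{II}$ owns. Every position in the loop through $\tuple{z,\Diamond X,\refuter}$ stays false, so your favorability invariant is maintained, yet the resulting infinite run regenerates the $\nu$ forever and is won by $\mathsf{II}$. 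What kills this loop is that falsity of a greatest fixed point is witnessed by a least ordinal $\alpha$ with $v\notin\|\nu X^{\alpha}.(P\land\Diamond X)\|$, and $\mathsf{I}$ can play so that this stage strictly decreases at each regeneration --- a quantity your $\mu$-only signature never records.

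Your proposed repair makes the same polarity error: ``regeneration of a $\mu$-formula always forces a strict drop regardless of the current role parity'' is false. At $\refuter$-parity a $\mu$-formula is owned by $\mathsf{I}$ and, under your own invariant, sits at a \emph{false} position; since the $\mu$-approximants increase toward the least fixed point, falsity at the fixed point entails falsity at every approximant, so there is no least ``stage witnessing membership'' to decrease --- and no decrease is needed, as infinite regeneration there is a win for $\mathsf{I}$. The correct bookkeeping, which is what the paper's construction amounts to, is: enumerate the $\mathsf{II}$-owned fixed points, interpret exactly those variables by their $r(j)$-th approximants in the augmented models $M^{r}_{n}$, and let the signature of a position be the least vector $r$ keeping it true (at $\verifier$-parity) resp.\ false (at $\refuter$-parity); then \emph{every} $\mathsf{II}$-owned regeneration forces a strict lexicographic drop, and well-foundedness excludes all $\mathsf{II}$-owned candidates at once, which is precisely what ``hence owned by $\mathsf{I}$'' requires. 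One caveat when you compare with the paper: its text enumerates the fixed points ``owned by $\mathsf{I}$'' for the $\mathsf{I}$-signature, but its concluding step (the outermost infinitely regenerated $Z_i$ is none of the enumerated $Y_j$, ``and so $\mathsf{I}$ wins'') is coherent only if the $Y_j$ are the $\mathsf{II}$-owned fixed points, so the enumeration there should be read with the polarity flipped; your proposal stumbles on exactly the point where the paper's own prose is loose.
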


\begin{proof}
    We prove that, if $M,w\models\varphi$, then $\mathsf{I}$ has a winning strategy for $\mathcal{G}(M,w\models\varphi)$ and, if $M,w\not\models\varphi$, then $\mathsf{II}$ has a winning strategy for $\mathcal{G}(M,w\models\varphi)$.
    This is sufficient to prove the theorem since the two players cannot both have a winning strategy for $\mathcal{G}(M,w\models\varphi)$ and since one of $M,w\models\varphi$ or $M,w\not\models\varphi$ always holds.

    Suppose $w\models \varphi$.
    We will assign to each main position $\tuple{v,\psi, \role}$ of the game an ordinal signature $\sig{v,\psi, \role}$.
    We show $\mathsf{I}$ is always able to control the truth of the positions in the evaluation game $\mathcal{G}(M,w\models\varphi)$ and move in a way that the signature is eventually constant.

    To define $\mathsf{I}$-signatures, we enumerate the fixed-point subformulas of $\varphi$ in non-increasing size:
    \[
        \eta_1 Z_1.\psi_1, \eta_2 Z_2.\psi_2, \dots, \eta_n Z_n.\psi_n.
    \]
    That is, we require that, if $i<j$, then $\eta_{i+1} Z_{i+1}.\psi_{i+1}\not\in \mathrm{Sub}(\eta_j Z_j.\psi_j)$; and, if $\eta_{i+1} Z_{i+1}.\psi_{i+1}\in \mathrm{Sub}(\eta_j Z_j.\psi_j)$, then $j\leq i$.
    We also enumerate the fixed-point subformulas of $\varphi$ which are owned by $\mathsf{I}$ in non-increasing size:
    \[
        \eta_1' Y_1.\chi_1, \eta_2' Y_2.\chi_2, \dots, \eta_m' Y_m.\chi_m.
    \]

    An \emph{$\mathsf{I}$-signature} $r = \tuple{r(1), \dots, r(m)}$ is a sequence of $m$ ordinals.
    Denote by $r(k)$ the $k$th component of $r$.
    Write $r =_k r'$ iff the first $k$ components of $r$ are identical.
    Order the signatures by the lexicographical order: $r<r'$ iff there is $k\in\{1,\dots, m\}$ such that $r =_k r'$ and $r(k+1)<r'(k+1)$.
    The lexicographical order is a well-ordering of the signatures.

    The \emph{augmented Kripke model} $M[X\mapsto A]$ is obtained by setting $V(X) := A$, where $M = \tuple{W, W^\bot,\preceq, R,V}$ is a Kripke model, $A\subseteq W$ and $X$ is a variable symbol.
    We want to evaluate subformulas of $\varphi$ where some $Z_1,\dots, Z_n$ occur free, so we augment $M$ with the correct valuations of these variables:
    \begin{align*}
        M_0     &:= V; \\
        M_{i+1} &:= M_i[Z_{i+1}\mapsto \|\eta_{i+1} Z_{i+1}.\psi_{i+1}\|^{M_i}].
    \end{align*}
    By the choice of our enumeration, $\eta_{i+1} Z_{i+1}.\psi_{i+1}$ does not contain free occurrences of $Z_{i+1},\dots, Z_n$, and so $M_i$ is well-defined.

    The $\alpha$th approximant $\mu X^\alpha.\varphi$ of $\mu X.\varphi$ is obtained by applying $\Gamma_\varphi(X)$ to $\emptyset$, $\alpha$ many times; and that the approximant $\nu X^\alpha.\varphi$ is obtained by applying $\Gamma_\varphi(X)$ to $W$, $\alpha$ many times.
    We define models $M_n^r$ where the variables $Y_j$ owned by $\mathsf{I}$ are assigned their $r(j)$th approximant $\|\eta_j^{r(j)} Y_j.\chi_j\|$, and variables owned by $\mathsf{II}$ receive their correct value.
    Formally, given a signature $r$, we define augmented models $M^r_0, \dots, M^r_n$ by
    \begin{align*}
        M_0^r     &:= V; \\
        M_{i+1}^r &:= \left\{
        \begin{array}{ll}
            M_i[Z_{i+1}\mapsto \|\eta_j' Y_j^{r_j}.\chi_j\|^{M_i^r}], & \text{if $ Z_{i+1} = Y_j$}; \\
            M_i[Z_{i+1}\mapsto \|\eta_{i+1} Z_{i+1}.\psi_{i+1}\|^{M_i}], & \text{if there is no $j$ such that $Z_{i+1} = Y_j$}.
        \end{array} \right.
    \end{align*}

    If $M_n,v\models\psi$, we call $\tuple{v,\psi, \role}$ a \emph{true position}; if $M_n,v \not\models\psi$, we call $\tuple{v,\psi, \role}$ a \emph{false position}.
    Now, if $\tuple{v,\psi, \role}$ a true position, then there is a least signature $r$ such that $M_n^r,v\models\psi$.
    Similarly, if $\tuple{v,\psi, \role}$ a false position, then there is a least signature $r$ such that $M_n^r,v\not\models\psi$.
    Denote these signatures by $\sig{v,\psi, \role}$.

    We will define a strategy $\sigma$ for $\mathsf{I}$ which guarantees that when the players are at $\tuple{v,\psi, \role}$, $v\models\psi$ if $\mathsf{I}$ is in the role of $\verifier$, and $v\not\models\psi$ if $\mathsf{I}$ is in the role of $\refuter$.
    Furthermore, $\mathsf{II}$ cannot move in ways the signature is increasing and most of $\mathsf{I}$'s moves never increase the signature.
    The only time the signature may increase is when regenerating some fixed-poin formula $\eta'_jY_j.\chi_j$, but in this case the first $j-1$ positions of the signature are not modified.

    We will also have that any positions $\tuple{v,\psi,\role}$ reachable when $\mathsf{I}$ follows the strategy $\sigma$ are true positions when $\role = \verifier$ and false positions when $\role=\refuter$.
    Remember that the game starts on the position $\tuple{w,\varphi,\verifier}$ and we assumed that $M,w\models\varphi$ holds, so this is true for the initial position of the game.

    We define $\mathsf{I}$'s strategy as follows:
    \begin{itemize}
        %%% \lor %%%
        \item Suppose the game is at the position $\tuple{v,\psi_1\lor\psi_2, \role}$.
        If $\role = \verifier$ and $\tuple{v,\psi_1\lor\psi_2, \verifier}$ is a true position; then $\mathsf{I}$ moves to $\tuple{v,\psi_i, \verifier}$ such that $M_n^\sig{v,\psi_1\lor\psi_2, \verifier}, v\models \psi_i$, with $i\in\{1,2\}$.
        By the definition of the signatures, $\sig{v,\psi_1\lor\psi_2, \verifier} = \sig{v,\psi_i, \verifier}$.
        If $\role = \refuter$ and $\tuple{v,\psi_1\lor\psi_2,\refuter}$ is a false position; then $M_n^\sig{v,\psi_1\lor\psi_2,\refuter}, v\not\models \psi_i$ and $\sig{v,\psi_1\lor\psi_2,\refuter} \geq \sig{v,\psi_i,\refuter}$ for all $i\in\{1,2\}$. So whichever way $\mathsf{II}$ moves, the next position is false and the signature is non-increasing.

        %%% \land %%%
        \item Suppose the game is at the position $\tuple{v,\psi_1\land\psi_2,\role}$.
        If $\role = \verifier$ and $\tuple{v,\psi_1\land\psi_2,\verifier}$ is a true position; then $M_n^\sig{v,\psi_1\land\psi_2,\verifier}, v\models \psi_i$ and $\sig{v,\psi_1\land\psi_2,\verifier} \geq \sig{v,\psi_i}$ for all $i\in\{1,2\}$.
        So whichever way $\mathsf{II}$ moves, the next position is true and the signature is non-increasing.
        If $\role = \refuter$ and $\tuple{v,\psi_1\land\psi_2,\refuter}$ is a false position; then $\mathsf{I}$ moves to $\tuple{v,\psi_i,\refuter}$ such that $M_n^\sig{v,\psi_1\land\psi_2,\refuter}, v\not\models \psi_i$ and $\sig{v,\psi_1\land\psi_2,\refuter} = \sig{v,\psi_i,\refuter}$, with $i\in\{1,2\}$.

        %%% \Diamond %%%
        \item Suppose the game is at the position $\tuple{v,\Diamond\psi,\role}$.
        If $\role = \verifier$ and $\tuple{v,\Diamond\psi,\verifier}$ is a true position; for all move $\tuple{\tuple{v'},\psi,\verifier}$ of $\mathsf{II}$, $\mathsf{I}$ can move to some $\tuple{v'',\psi,\verifier}$ such that $M_n^\sig{v,\Diamond\psi,\verifier}, v''\models \psi$.
        By the definition of the signatures, $\sig{v,\Diamond\psi,\verifier} \geq \sig{v'',\psi,\verifier}$.
        If $\role = \refuter$ and $\tuple{v,\Diamond\psi,\refuter}$ is a false position; $\mathsf{I}$ moves to a position $\tuple{\tuple{v'},\psi,\refuter}$ such that all answers $\tuple{v'',\psi,\refuter}$ by $\mathsf{II}$ are false positions.
        Furthermore, $M_n^\sig{v,\Diamond\psi,\refuter}, v''\not\models \psi$ and $\sig{v,\Diamond\psi,\refuter} \geq \sig{v'',\psi,\refuter}$ for all such $v''$.

        %%% \Box %%%
        \item Suppose the game is at the position $\tuple{v,\Box\psi,\role}$.
        If $\role = \verifier$ and $\tuple{v,\Box\psi,\verifier}$ is a true position; for all moves $\tuple{[v'],\psi,\verifier}$ and $\tuple{v'',\psi,\verifier}$ of $\mathsf{II}$, we have $M_n^\sig{v,\Box\psi,\verifier}, v''\models \psi$.
        By the definition of the signatures, $\sig{v,\Box\psi,\verifier} \geq \sig{v'',\psi,\verifier}$.
        If $\role = \refuter$ and $\tuple{v,\Box\psi,\refuter}$ is a false position; $\mathsf{I}$ moves to a position $\tuple{[v'],\psi,\refuter}$ and then to a position $\tuple{v',\psi,\refuter}$ which is a false position.
        Furthermore, $M_n^\sig{v,\Box\psi,\refuter}, v''\not\models \psi$ and $\sig{v,\Box\psi,\refuter} \geq \sig{v'',\psi}$.

        %%% \neg %%%
        \item Suppose the game is at the position $\tuple{v,\neg\psi,\role}$.
        If $\role = \verifier$ and $\tuple{v,\neg\psi,\verifier}$ is a true position; after all move $\tuple{v',\psi,\refuter}$ of $\mathsf{II}$, the players switch roles and we have $M_n^\sig{v,\neg\psi,\verifier}, v'\not\models \psi$.
        By the definition of the signatures, $\sig{v,\neg\psi,\verifier} \geq \sig{v',\psi,\refuter}$.
        If $\role = \refuter$ and $\tuple{v,\Box\psi,\refuter}$ is a false position; $\mathsf{I}$ moves to a position $\tuple{v',\psi,\verifier}$ which is a true position and switches roles with $\mathsf{II}$.
        Furthermore, $M_n^\sig{v,\neg\psi,\refuter}, v'\models \psi$ and $\sig{v,\neg\psi,\refuter} \geq \sig{v',\psi,\verifier}$.

        %%% \to %%%
        \item Suppose the game is at the position $\tuple{v,\psi_1\to\psi_2,\role}$.
        If $\role = \verifier$ and $\tuple{v,\psi_1\to\psi_2,\verifier}$ is a true position.
        After $\mathsf{II}$ moves to $\tuple{v',\psi_1?\psi_2,\verifier}$, $\mathsf{I}$ moves to $\tuple{v',\psi_2,\verifier}$ if it is a true position.
        Otherwise, $\mathsf{I}$ moves to $\tuple{v',\psi_1,\refuter}$; in this case, $\tuple{v',\psi_1,\refuter}$ is a false position.
        Either way, $\sig{v,\psi_1\to\psi_2,\verifier} \geq \sig{v',\psi_i,\role'}$.
        If $\role = \refuter$ and $\tuple{v,\psi_1\to\psi_2,\refuter}$ is a false position; $\mathsf{I}$ moves to a position $\tuple{v',\psi_1?\psi_2,\refuter}$ such that $\tuple{v',\psi_1\verifier}$ is a true position  and $\tuple{v',\psi_2,\refuter}$ is a false position.
        Any answer of $\mathsf{II}$ satisfies our requirements.

        \item Suppose the game is at $\tuple{v,\eta_j' Y_j.\chi_j,\role}$ or at $\tuple{v,Y_j,\role}$, then the owner of the position must move to $\tuple{v,\chi_j,\role}$.
        If there there is $j$ such that $Z_i = Y_j$, then $\sig{v,\eta_j' Y_j.\chi_j} =_{j-1} \sig{v,Y_j} =_{j+1} \sig{w,\chi_j}$ and $\sig{w,Y_j}(j) > \sig{w,\chi_j}(j)$.
        If there is no $j$ such that $Z_i = Y_j$, then $\sig{w,\eta_{i+1} Z_{i+1}.\psi_{i+1}} = \sig{w,Z_i} = \sig{w,\psi_i}$.
    \end{itemize}

    On finite runs, $\mathsf{I}$ wins by the construction of the strategy $\sigma$: $\mathsf{I}$ is $\mathsf{V}$ at a true position of the form $\tuple{v,P,\role}$ reachable following $\sigma$.
    Similarly, $\mathsf{I}$ is $\mathsf{R}$ at a false positions of the form $\tuple{v,P,\role}$.
    Also, $\mathsf{I}$ is $\mathsf{V}$ at true positions $\tuple{v,\psi,\role}$ where $v\in W^\bot$.

    Now, consider an infinite run $\tuple{w_0,\varphi_0,\role_0}, \tuple{w_1, \varphi_1,\role_1}, \dots$ where $\mathsf{I}$ follows $\sigma$, let $i$ be the smallest number in $\{1,\dots, n\}$ such that $\eta_i Z_i$ is an infinitely often regenerated fixed-point operator.
    Suppose there is $j\in\{1,\dots, m\}$ such that $Z_i = Y_j$ for a contradiction.
    Let $k_1,k_2,\dots$ be the positions where $Y_j$ occur; that is, all the positions $\tuple{w_{k_l}, \psi,\role} = \tuple{w_{k_l}, Y_j,\role}$.
    Without loss of generality, we suppose that for all $i'<i$ no $Z_{i'}$ is regenerated after the $k_1$th position of the run.
    The move from $\tuple{w_{k_i}, Y_j,\role}$ to $\tuple{w_{k_{i}+1}, \chi_j,\role}$ causes a strict decrease in the signature.
    The other moves between $k_i+1$ and $k_{i+1}$ cannot cancel this decrease, since either the signature does not change or one of the first $i$ positions of the signature is reduced.
    Therefore the sequence of signatures
    \[
        \sig{w_{k_1}, Y_j,Q_{k_1}}, \sig{w_{k_2}, Y_j,Q_{k_2}}, \sig{w_{k_3}, Y_j,Q_{k_3}}, \dots
    \]
    is strictly decreasing.
    This is a contradiction, as the signatures are well-ordered.
    Therefore there is no $j$ such that $Z_i = Y_j$, and so $\mathsf{I}$ wins the run.

    We conclude that the strategy $\sigma$ is a winning strategy for $\mathsf{I}$.

    We now sketch how to prove the other half of the theorem.
    If $M,w\not\models\varphi$, then we can define a winning strategy for $\mathsf{II}$ similar to the strategy for $\mathsf{I}$ defined above.
    The main difference is that we need to consider $\mathsf{II}$-signatures, denoting approximants for $\mathsf{II}$'s variables.

    Again, enumerate the fixed-point subformulas of $\varphi$ in non-increasing size:
    \[
        \eta_1 Z_1.\psi_1, \eta_2 Z_2.\psi_2, \dots, \eta_n Z_n.\psi_n.
    \]
    We now also enumerate the fixed-point subformulas of $\varphi$ which are owned by $\mathsf{II}$ in non-increasing size:
    \[
        \eta_1' Y_1.\chi_1, \eta_2' Y_2.\chi_2, \dots, \eta_m' Y_m.\chi_m.
    \]
    An \emph{$\mathsf{II}$-signature} $r = \tuple{r(1), \dots, r(m)}$ is a sequence of $m$ ordinals.
    As with $\mathsf{I}$-signatures, denote by $r(k)$ the $k$th component of $r$ and write $r =_k r'$ iff the first $k$ components of $r$ are identical.
    Order the $\mathsf{II}$-signatures by the lexicographical order: $r<r'$ iff there is $k\in\{1,\dots, m\}$ such that $r =_k r'$ and $r(k+1)<r'(k+1)$.
    The lexicographical order is a well-ordering of the $\mathsf{II}$-signatures.

    As above, let $M_n$ be the model $M$ augmented with the correct valuations of the variables occurring in $\varphi$.
    Given a signature $r$, we define augmented models $M^r_0, \dots, M^r_n$ by
    \begin{align*}
        M_0^r     &:= V; \\
        M_{i+1}^r &:= \left\{
        \begin{array}{ll}
            M_i[Z_{i+1}\mapsto \|\eta_j' Y_j^{r_j}.\chi_j\|^{M_i^r}], & \text{if $ Z_{i+1} = Y_j$}; \\
            M_i[Z_{i+1}\mapsto \|\eta_{i+1} Z_{i+1}.\psi_{i+1}\|^{M_i}], & \text{if there is no $j$ such that $Z_{i+1} = Y_j$}.
        \end{array} \right.
    \end{align*}
    On $M_n^r$, the variables $Y_j$ owned by $\mathsf{II}$ are assigned their $r(j)$th approximant $\|\eta_j^{r(j)} Y_j.\chi_j\|$, and variables owned by $\mathsf{I}$ receive their correct value.
    If $M_n,v\models\psi$, we call $\tuple{v,\psi,\role}$ a true position; if $M_n,v \not\models\psi$, we call $\tuple{v,\psi,\role}$ a false position.
    Now, if $\tuple{v,\psi,\role}$ a true position, then there is a least signature $r$ such that $M_n^r,v\models\psi$.
    Similarly, if $\tuple{v,\psi,\role}$ a false position, then there is a least signature $r$ such that $M_n^r,v\not\models\psi$.
    Denote these signatures by $\mathsf{sig}^\mathsf{II}{v,\psi,\role}$.

    Similar to the first case, $\mathsf{I}$ cannot move in ways where the $\mathsf{II}$-signature increases, and we can build a strategy $\tau$ for $\mathsf{II}$ in a way that, eventually, their moves do not increase the signature.
    By the same argument as above, the strategy $\tau$ is winning.
\end{proof}

%%%%%%%%%%%%%%%%%%%%%%%%%%%%%%%%%%%%%%%%%%%%%%%%%%%%%%%%%%%%%%%
\section{The collapse to modal logic over \texorpdfstring{$\isf$}{IS5} models}
\label{sec::collapse}
%%%%%%%%%%%%%%%%%%%%%%%%%%%%%%%%%%%%%%%%%%%%%%%%%%%%%%%%%%%%%%%
In this section we use the game semantics to prove that the $\mu$-calculus collapses to modal logic over $\isf$, that is, that all $\mu$-formula is equivalent to a modal formula over $\isf$-models.
In the first subsection, we isolate the key lemma to this proof.
In the second subsection, we prove the collapse.

\subsection{The key lemma}
To prove the $\mu$-calculus' collapse over classical $\mathsf{S5}$-models, Alberucci and Facchini use the following result:
\begin{proposition}[Alberucci, Facchini \cite{alberucci2009modal}]
    Let $M=\tuple{M,\equiv,W}$ be an $\mathsf{S5}$-model.
    If $w\equiv v$, then $M,w\models\triangle\varphi$ iff $M,v\models\triangle\varphi$, where $\triangle \in \{\Box,\Diamond\}$.
\end{proposition}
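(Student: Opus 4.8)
The plan is to reduce the whole statement to the single observation that, in a classical $\mathsf{S5}$-model, the truth of $\triangle\varphi$ at a world depends only on the equivalence class of that world under $\equiv$. Since both $\Box$ and $\Diamond$ are interpreted by quantifying over the $\equiv$-successors of a world, and $\equiv$ is an equivalence relation, the set of such successors is exactly the $\equiv$-class of the world, and this class is shared by $w$ and $v$ whenever $w\equiv v$.

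Concretely, I would first establish the auxiliary fact that if $w\equiv v$ then the successor sets coincide, i.e.
\[
    \{u \mid w\equiv u\} = \{u \mid v\equiv u\}.
\]
This is where the equivalence-relation hypothesis is used: if $w\equiv u$, then from $w\equiv v$ and symmetry we get $v\equiv w$, and transitivity yields $v\equiv u$, so the left-hand set is contained in the right-hand set; the reverse inclusion is symmetric. This is the only step with any content, and even it is routine.

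With this equality in hand, I would treat the two cases uniformly. For $\triangle = \Box$, by definition $M,w\models\Box\varphi$ holds iff $M,u\models\varphi$ for every $u$ with $w\equiv u$; since the set of such $u$ is identical to the set of $u$ with $v\equiv u$, this condition is literally the same as the one defining $M,v\models\Box\varphi$, giving the biconditional. For $\triangle = \Diamond$, the argument is the same with the universal quantifier replaced by an existential one: $M,w\models\Diamond\varphi$ iff some $u$ with $w\equiv u$ satisfies $\varphi$, which ranges over the same set of worlds as the condition for $M,v\models\Diamond\varphi$.

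I do not expect any genuine obstacle here: the proposition is a direct consequence of the classical Kripke semantics together with the symmetry and transitivity of $\equiv$, and no appeal to the bi-relational constructive semantics or to fixed-point machinery is needed. The only point worth stating explicitly is the successor-set equality, since it is precisely the place where all three properties of an equivalence relation (reflexivity is implicit in keeping $w,v$ among the successors, symmetry and transitivity in the two inclusions) are invoked.
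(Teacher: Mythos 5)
Your proof is correct, and it is the standard argument for this fact. Note, however, that the paper itself does not prove this proposition at all: it is quoted directly from Alberucci and Facchini \cite{alberucci2009modal} as a known classical result, serving only to motivate what follows. Your successor-set argument --- that $w\equiv v$ forces $\{u \mid w\equiv u\} = \{u \mid v\equiv u\}$ by symmetry and transitivity, after which $\Box$ and $\Diamond$ quantify over literally the same set --- is exactly the reason the classical statement is routine. The point the paper is making is that this argument does \emph{not} survive the passage to bi-relational $\isf$-models, where the clauses for $\Box\varphi$ and $\Diamond\varphi$ interleave the intuitionistic preorder $\preceq$ with the modal relation $\equiv$, so the truth of $\triangle\varphi$ is no longer a function of the $\equiv$-class alone; this is why the paper replaces the proposition with the weaker, one-directional Lemma \ref{lem::glue_lemma-intuitionistic} for the composite relation $\preceq;\equiv$, whose proof needs transitivity of $\preceq;\equiv$ (Lemma \ref{lem::transitiveness-is5}) and backward confluence rather than a successor-set identity. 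One small remark on your write-up: reflexivity of $\equiv$ is not actually used anywhere in your argument; symmetry and transitivity alone give both inclusions, so the parenthetical claim that all three properties are invoked is slightly overstated, though this does not affect correctness.
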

We cannot prove the same result over $\isf$-models, but the following Lemma will suffice:
\begin{lemma}
    \label{lem::glue_lemma-intuitionistic}
    Let $M=\tuple{W,\emptyset,\preceq, \equiv, V}$ be an $\isf$-model.
    Let $\preceq;\equiv$ be the composition of $\preceq$ and $\equiv$.
    If $w\preceq;\equiv w'$, then
    \[
    M,w\models \triangle\varphi \text{ implies } M,w'\models \triangle\varphi,
    \]
  where $\triangle \in \{\Box,\Diamond\}$.
\end{lemma}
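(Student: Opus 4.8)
The plan is to unfold the bi-relational semantics of $\Box$ and $\Diamond$ on the $\isf$-model $M$ (where the modal relation is $\equiv$) and handle the two cases of $\triangle$ separately. In each case the goal is to reduce an instance of the conclusion at $w'$ to an instance of the hypothesis at $w$, using the transitivity and confluence properties of $\isf$-models to realign worlds. First I would make the hypothesis explicit: since $w\mathbin{(\preceq;\equiv)}w'$, fix $u_0$ with $w\preceq u_0\equiv w'$.

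For $\triangle=\Box$, suppose $M,w\models\Box\varphi$. To show $M,w'\models\Box\varphi$, take arbitrary $v',u'$ with $w'\preceq v'\equiv u'$; the goal is $u'\models\varphi$. Chaining hypotheses yields $w\preceq u_0\equiv w'\preceq v'\equiv u'$, which exhibits both $w\mathbin{(\preceq;\equiv)}w'$ and $w'\mathbin{(\preceq;\equiv)}u'$. By Lemma~\ref{lem::transitiveness-is5} the relation $\preceq;\equiv$ is transitive, so $w\mathbin{(\preceq;\equiv)}u'$; that is, there is $t$ with $w\preceq t\equiv u'$. Applying $M,w\models\Box\varphi$ to the pair $(t,u')$ gives $u'\models\varphi$. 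Thus the box case is essentially a direct consequence of the transitivity lemma.

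For $\triangle=\Diamond$, suppose $M,w\models\Diamond\varphi$. To show $M,w'\models\Diamond\varphi$, take arbitrary $v'$ with $w'\preceq v'$; the goal is to find $u'$ with $v'\equiv u'$ and $u'\models\varphi$. Here one cannot simply collapse the relational chain as in the box case, because $\Diamond$ quantifies existentially over the modal successor — this is the main obstacle. The key move is to use \emph{backward confluence} to pull $v'$ back along $\preceq$ into a world $\preceq$-reachable from $w$: from $u_0\equiv w'\preceq v'$, backward confluence supplies $w''$ with $u_0\preceq w''\equiv v'$, and transitivity of $\preceq$ then gives $w\preceq w''$. Now $M,w\models\Diamond\varphi$ applied at $w''$ yields $u$ with $w''\equiv u$ and $u\models\varphi$. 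Finally, from $w''\equiv v'$ and $w''\equiv u$, symmetry and transitivity of $\equiv$ give $v'\equiv u$, so $u':=u$ witnesses $M,w'\models\Diamond\varphi$.

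The main obstacle is precisely the diamond case: unlike the box case, the relational chain cannot be collapsed directly, and the argument genuinely relies on backward confluence (available for $\isf$-models, and interchangeable with forward confluence by Proposition~\ref{prop::confluences-are-equivalent}) to realign the universally chosen $\preceq$-successor $v'$ of $w'$ with a $\preceq$-successor of $w$ before invoking the existential witness provided by $M,w\models\Diamond\varphi$.
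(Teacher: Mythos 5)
Your proof is correct and follows essentially the same route as the paper's: the $\Box$ case is handled by collapsing the chain via the transitivity of $\preceq;\equiv$ (Lemma~\ref{lem::transitiveness-is5}), and the $\Diamond$ case uses backward confluence to realign the universally chosen $\preceq$-successor of $w'$ with a $\preceq$-successor of $w$, then transfers the witness along $\equiv$ using symmetry and transitivity. Your intermediate worlds $u_0$, $w''$, $u$ correspond exactly to the paper's $v$, $u$, $u'$.
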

\begin{proof}
    Fix an $\isf$-model $M=\tuple{W,\emptyset,\preceq, \equiv, V}$.
    The composition $\preceq;\equiv$ is a transitive relation by Lemma \ref{lem::transitiveness-is5}.

    Also note that the worlds occurring in some position of a play are $\preceq;\equiv$-accessible from the previously occurring worlds.
    That is, when if players have gone through a position $\tuple{v,\psi,\role}$ and later $\tuple{v',\psi',\role'}$, then $v\preceq;\equiv v'$.
    This happens because $\preceq$ and $\equiv$ are reflexive relations and $\preceq;\equiv$ is transitive.

    Now, suppose $w\preceq;\equiv w'$ and $M,w\models\Diamond\varphi$.
    For all $v\succeq w$, there is $u\equiv v$ such that $M,u\models \varphi$.
    Let $v,v'$ be such that $w\preceq v\equiv w'\preceq v'$.
    By downward confluence, there is $u$ such that $v\preceq u \equiv v'$.
    By the transitivity of $\preceq$, $w\preceq u$.
    So there is $u'\succeq w$ such that $u \equiv u'$ and $M,u'\models \varphi$.
    As $v'\equiv u \equiv u'$, $v'\equiv u'$.
    So for all $v'\succeq w'$ there is $u'\equiv v'$ such that $M,u'\models\varphi$.
    That is, $M,w'\models \Diamond \varphi$.

    Similarly, suppose $w\preceq;\equiv w'$ and $M,w\models \Box\varphi$.
    Therefore $w\preceq;\equiv u$ implies $M,u\models \varphi$.
    Let $w'\preceq;\equiv u'$, then $w\preceq;\equiv u'$ by the transitiveness of $\preceq;\equiv$.
    So $M,u'\models\varphi$.
    Thus $M,w'\models\Box\varphi$.
\end{proof}

\subsection{The collapse}
We first show that the fixed-points for modal formulas can be reached in two steps.
Our proof is by contradiction.
This contradiction is not essential, but makes the proof easier to understand.
\begin{lemma}
    \label{lem::intuitionistic-eta_x-varphi_equiv_varphi2}
    Let $M=\tuple{W,\emptyset,\preceq, \equiv, V}$ be an $\isf$-model and $\varphi$ be a modal formula where $X$ is positive and appears only once in $\varphi$.
    Then
    \[
      \|\mu X.\varphi\|^M = \|\varphi(\varphi(\top))\|^M \text{ and }\|\nu X.\varphi\|^M = \|\varphi(\varphi(\bot))\|^M.
   \]
\end{lemma}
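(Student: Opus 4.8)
The plan is to prove both identities by showing that, over an $\isf$-model, the approximant sequence of a single-occurrence positive modal formula stabilises after only two applications of $\varphi$, with the collapse forced by the glue lemma (Lemma~\ref{lem::glue_lemma-intuitionistic}). If the unique occurrence of $X$ is not guarded, then $\varphi$ is a propositional combination in which $X$ appears once and a direct computation already gives the claim; the substance is the guarded case, where the single occurrence sits inside one modal subformula $\triangle\theta$ of $\varphi$. The governing observation is that once $\varphi$ has been applied a single time, the truth of this modal subformula is $\preceq;\equiv$-persistent by Lemma~\ref{lem::glue_lemma-intuitionistic}, so a second application of $\varphi$ can no longer change its valuation.

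Following the paper's framing, I would argue by contradiction. Suppose one of the identities fails, say $\|\mu X.\varphi\|^M \ne \|\varphi(\varphi(\top))\|^M$, and pick a world $w$ witnessing the discrepancy. Since $X$ lies under $\triangle$, whether $w \in \Gamma_\varphi(A)$ depends only on the truth of $\triangle\theta$, with $X$ valued at $A$, at the worlds $\preceq;\equiv$-reachable from $w$; as in the proof of the glue lemma, all worlds met later in the evaluation are $\preceq;\equiv$-accessible from $w$, and by Lemma~\ref{lem::transitiveness-is5} this relation is transitive. Persistence then shows that the modal subformula takes the same value whether $X$ is read as $\varphi(\top)$ or as $\varphi(\varphi(\top))$, which makes $\varphi(\varphi(\top))$ a fixed point of $\Gamma_\varphi$. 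By the extremality of the least fixed point, $\varphi(\varphi(\top))$ must then coincide with $\mu X.\varphi$, contradicting the choice of $w$. The identity $\|\nu X.\varphi\|^M = \|\varphi(\varphi(\bot))\|^M$ is obtained dually, using the extremality of the greatest fixed point.

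The concrete steps are: (i) reduce each identity to showing that the twofold iterate of $\varphi$ is already a fixed point of $\Gamma_\varphi$, using monotonicity (Proposition~\ref{prop::monotoness_of_Gamma_varphi}) together with extremality; (ii) localise the single guarded occurrence of $X$ inside $\triangle\theta$; and (iii) apply the glue lemma at the worlds witnessing $\triangle\theta$ to collapse the third iterate onto the second. The main obstacle I expect is step (iii): tracking the unique occurrence of $X$ through the two nested applications of $\varphi$ and invoking Lemma~\ref{lem::glue_lemma-intuitionistic} at exactly the worlds that witness the modal subformula, so that $\preceq;\equiv$-persistence genuinely forces the two iterates to agree. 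The bookkeeping of which seed feeds which fixed point is delicate and is where the argument must be carried out with the most care.
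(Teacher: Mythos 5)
Your proposal has two genuine problems. First, the extremality step is invalid, and the pairing of fixed points with seeds that you inherited from the statement cannot be proved because it is false as printed (the displayed equation in the lemma is a typo: the paper's own proof establishes $\|\nu X.\varphi\| = \|\varphi(\varphi(\top))\|$ and $\|\mu X.\varphi\| = \|\varphi(\varphi(\bot))\|$, i.e.\ $\nu$ with seed $\top$ and $\mu$ with seed $\bot$). Your inference ``$\varphi(\varphi(\top))$ is a fixed point of $\Gamma_\varphi$, hence by leastness it coincides with $\mu X.\varphi$'' only yields $\|\mu X.\varphi\| \subseteq \|\varphi(\varphi(\top))\|$: leastness says $\mu$ is below every fixed point, not equal to it. Concretely, take $\varphi(X) = \Diamond X$ on any $\isf$-model: $\|\mu X.\Diamond X\| = \emptyset$, while $\|\Diamond\Diamond\top\| = W$ is a fixed point of $\Gamma_\varphi$; dually, $\varphi(X)=\Box X$ refutes $\|\nu X.\varphi\| = \|\varphi(\varphi(\bot))\|$, since $\|\nu X.\Box X\| = W$ but $\|\Box\Box\bot\| = \emptyset$. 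The correct reduction runs the other way around: monotonicity gives $\|\varphi(\varphi(\bot))\| \subseteq \|\mu X.\varphi\|$ and $\|\nu X.\varphi\| \subseteq \|\varphi(\varphi(\top))\|$ for free, and fixed-point-hood of the second iterate then closes the gap via minimality (for $\mu$, seed $\bot$) and maximality (for $\nu$, seed $\top$).

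Second, your step (iii) --- ``persistence then shows that the modal subformula takes the same value whether $X$ is read as $\varphi(\top)$ or as $\varphi(\varphi(\top))$'' --- is an assertion of precisely the thing that needs proof, and Lemma \ref{lem::glue_lemma-intuitionistic} alone does not deliver it: the glue lemma transfers the truth of \emph{one} formula $\triangle\psi$ along $\preceq;\equiv$, not the agreement of two different substitution instances. What must be shown is the nontrivial inclusion $\|\varphi(\varphi(\top))\| \subseteq \|\varphi(\varphi(\varphi(\top)))\|$, and the paper's mechanism for it is game-theoretic: assuming $w$ witnesses the failure, pit a winning strategy $\sigma$ for $\mathsf{I}$ in $\mathcal{G}(M,w\models\varphi(\varphi(\top)))$ against a winning strategy $\tau$ for $\mathsf{II}$ in $\mathcal{G}(M,w\models\varphi(\varphi(\varphi(\top))))$, mirroring moves through the syntactically identical outer context (this is what aligns the $\lor/\land$ choices, the worlds chosen at $\preceq$-steps, and the role swaps at $\to$ and $\neg$); the first divergence yields worlds $w' \preceq;\equiv\, w''$ with $w' \in \|\triangle\beta(\varphi(\top))\|$ (the outer modal layer of the first game, via Theorem \ref{thm::corectness-game-semantics}) and $w'' \notin \|\triangle\beta(\varphi(\top))\|$ (the middle layer of the second game) --- the same formula at two nested depths --- contradicting Lemma \ref{lem::glue_lemma-intuitionistic}. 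Your appeal to ``the worlds $\preceq;\equiv$-reachable from $w$'' is right in spirit, but with intuitionistic $\to$ and $\neg$ in the context the evaluation alternates quantifiers over $\preceq$-successors, and aligning the witnessing worlds of the two evaluations is exactly what the mirrored strategies accomplish; without them (or an equivalent induction on the context) the collapse of the third iterate onto the second is unsubstantiated. A minor further point: in your unguarded case $\varphi$ need not be a propositional combination ($X$ may be unguarded while $\varphi$ contains modal subformulas not involving $X$); the paper instead assumes $\varphi$ has the guarded shape $\alpha(\triangle\beta(X))$, justified by passing to well-named equivalents.
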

\begin{proof}
    We first show that $\|\nu X.\varphi\| = \|\varphi(\varphi(\top))\|$.
    Let $M=\tuple{W,W^\bot,\preceq,\equiv,V}$ be an $\isf$-models and $\nu X.\varphi$ be a well-named $\mu$-formula.
    We can also suppose that $\varphi$ is of the form $\alpha(\triangle \beta(X))$ with $\triangle \in \{\Box, \Diamond\}$.

    We show that $\nu X.\varphi$ is equivalent to $\varphi(\varphi(\top))$.
    As $X$ is positive in $\varphi(X)$, we have that $\|\varphi(\varphi(\varphi(\top)))\| \subseteq \|\varphi(\varphi(\top))\|$.
    So we need only to show that $\|\varphi(\varphi(\top))\| \subseteq \|\varphi(\varphi(\varphi(\top)))\|$.

    For a contradiction, suppose that $w\in \|\varphi(\varphi(\top))\|$ and $w\not\in \|\varphi(\varphi(\varphi(\top)))\|$.
    Then $\mathsf{I}$ has a winning strategy $\sigma$ for the evaluation game $\mathcal{G}_2 =\mathcal{G}(M,w\models\varphi(\varphi(\top)))$; and $\mathsf{II}$ has a winning strategy $\tau$ for the evaluation game $\mathcal{G}_3 =\mathcal{G}(M,w\models\varphi(\varphi(\varphi(\top))))$.
    We use $\sigma$ and $\tau$ to define strategies $\sigma'$ for $\mathsf{I}$ in $\mathcal{G}_3$ and $\tau'$ for $\mathsf{II}$ in $\mathcal{G}_2$.
    Remember that $\mathsf{I}$ starts on the role of $\verifier$ and $\mathsf{II}$ starts on the role of $\refuter$.

    We have the players use analogous strategies on both games.
    Suppose the players are in positions $\tuple{v,\psi(\top),\role}$ in $\mathcal{G}_2$ and $\tuple{v,\psi(\varphi(\top)),\role}$ in $\mathcal{G}_3$.
    Both positions have the same owner, in the same role.
    That is, if $\mathsf{I}$'s turn in some game, it is $\mathsf{I}$'s turn in both games; and the owner's role is $\verifier$ in some game, their role is $\verifier$ in both games.
    For example, suppose $\mathsf{I}$ is playing the role of $\refuter$ and the players are in positions $\tuple{v,\neg\psi(\top),\refuter}$ and $\tuple{v,\neg\psi(\varphi(\top)),\refuter}$ in $\mathcal{G}_2$ and $\mathcal{G}_3$.
    If $\mathsf{I}$ plays $\sigma(\tuple{v,\neg\psi(\top)\refuter}) = \tuple{v',\psi(\top)\verifier}$ in $\mathcal{G}_2$, they play $\tuple{v,\psi(\varphi(\top))\verifier}$ in $\mathcal{G}_3$.

    The players continue both games following the strategies described above until they get to a position of the form $\tuple{v,P}$ in both games; or they get to positions of the form $\tuple{w'',\triangle\beta(\top),\role}$ in $\mathcal{G}_2$ and $\tuple{w'',\triangle\beta(\varphi(\top)),\role}$ in $\mathcal{G}_3$.

    \emph{Case 1.} Suppose the players are in a position $\tuple{v,P,\role}$ in both games.
    Without loss of generality, suppose $\mathsf{I}$ is $\verifier$ and $\mathsf{II}$ is $\refuter$.
    As $\sigma$ is winning for $\mathsf{I}$ in $\mathcal{G}_2$, $v\in \|P\|$.
    As $\tau$ is winning for $\mathsf{II}$ in $\mathcal{G}_3$, $v\not\in \|P\|$.
    And so we have a contradiction.
    A similar contradiction is reached if $\mathsf{I}$ is $\refuter$ and $\mathsf{II}$ is $\verifier$.

    \emph{Case 2.} Suppose the players are in positions of the form $\tuple{w'',\triangle\beta(\top),\role}$ in $\mathcal{G}_2$ and $\tuple{w'',\triangle\beta(\varphi(\top)),\role}$ in $\mathcal{G}_3$.
    Without loss of generality, suppose $\mathsf{I}$ is $\verifier$ and $\mathsf{II}$ is $\refuter$.
    As $\tau$ is a winning strategy for $\mathsf{II}$ in $\mathcal{G}_3$, $w''\not \in \|\triangle\beta(\varphi(\top))\|$.
    Previously, the players must have been through some a position $\tuple{w',\triangle\beta(\varphi(\top)),\verifier}$ in $\mathcal{G}_2$.
    As $\sigma$ is a winning strategy for $\mathsf{I}$ in $\mathcal{G}_2$, $w'\in \|\triangle\beta(\varphi(\top))\|$.
    Note that, from the definition of the game, the reflexivity of $\preceq$ and $\equiv$, and the transitivity of $\preceq;\equiv$, we have that $w'\preceq;\equiv w''$.
    By Lemma \ref{lem::glue_lemma-intuitionistic}, $w''\in \|\triangle\beta(\varphi(\top))\|$ since $w'\in \|\triangle\beta(\varphi(\top))\|$.
    We have our contradiction.

    Either way, we conclude that $\|\varphi(\varphi(\top))\| \subseteq \|\varphi(\varphi(\varphi(\top)))\|$.
    And so $\|\nu X.\varphi\| = \|\varphi(\varphi(\bot))\|$.

    In classical semantics, we can prove $\|\mu X.\varphi\| = \|\varphi(\varphi(\bot))\|$ by a direct calculation.
    We cannot do the same in intuitionistic semantics as we cannot use the law of excluded middle.
    We have to prove it directly.

    First, $\|\varphi(\varphi(\bot))\| \subseteq \|\mu X.\varphi\|$ holds as $X$ is positive in $\varphi(X)$.
    If we suppose there is $w$ such that $w\in\|\mu X.\varphi\|$ and $w \not\in\|\varphi(\varphi(\bot))\|$, we get a similar contradiction.
\end{proof}

We are now able to show the constructive $\mu$-calculus' collapse to modal logic over $\isf$-models.
\begin{theorem}
    \label{thm::the_collapse}
    Over $\isf$-models, every $\mu$-formula is equivalent to a modal formula.
\end{theorem}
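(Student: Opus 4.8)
The plan is to prove the statement by structural induction on the $\mu$-formula $\psi$, showing that each such $\psi$ is equivalent over $\isf$-models to some modal formula. The driving observation is that the class of modal formulas is closed under the Boolean connectives and the modalities, and that the bi-relational semantics is compositional: if $\psi_1\equiv\psi_1'$ and $\psi_2\equiv\psi_2'$ over all $\isf$-models, then $\neg\psi_1$, $\psi_1\land\psi_2$, $\psi_1\lor\psi_2$, $\psi_1\to\psi_2$, $\Box\psi_1$, and $\Diamond\psi_1$ are equivalent to $\neg\psi_1'$, $\psi_1'\land\psi_2'$, and so on, since each such valuation depends only on the valuations $\|\psi_i\|^M$. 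Hence for $\psi$ equal to $P$, $X$, $\bot$, $\top$, or built by a connective or a modality, the induction hypothesis applied to the immediate subformulas already produces a modal formula equivalent to $\psi$. The only case that requires genuine work is $\psi=\eta X.\varphi$.

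For the fixed-point case I would first push the elimination inwards. Since $\varphi$ is structurally smaller than $\eta X.\varphi$, the induction hypothesis (strengthened so as to preserve the polarity of the free variables) gives a modal formula $\varphi'$ with $X$ positive such that $\|\varphi\|^N=\|\varphi'\|^N$ for every $\isf$-model $N$. I would then lift this equivalence to the level of the fixed-point operators. Because $W^\bot=\emptyset$ in an $\isf$-model and variable valuations are persistent, the augmented model $M[X\mapsto A]$ is again an $\isf$-model whenever $A$ is $\preceq$-upward closed; as the approximants of a fixed point are upward closed, the induction hypothesis yields $\Gamma_{\varphi(X)}(A)=\|\varphi(A)\|^M=\|\varphi'(A)\|^M=\Gamma_{\varphi'(X)}(A)$ on all the sets that matter, whence $\|\eta X.\varphi\|^M=\|\eta X.\varphi'\|^M$. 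It then remains to eliminate the fixed point from $\eta X.\varphi'$ with $\varphi'$ modal, and here Lemma \ref{lem::intuitionistic-eta_x-varphi_equiv_varphi2} does the job, collapsing $\nu X.\varphi'$ to $\varphi'(\varphi'(\top))$ and $\mu X.\varphi'$ to $\varphi'(\varphi'(\bot))$; both are modal, since substituting a modal formula for $X$ into the modal $\varphi'$ again yields a modal formula.

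The main obstacle is bridging the gap between the hypotheses of Lemma \ref{lem::intuitionistic-eta_x-varphi_equiv_varphi2} and the formula the induction hypothesis produces: the lemma asks $X$ to be guarded and to occur exactly once in the body, whereas eliminating inner fixed points (for example turning $\mu Y.(X\lor\Diamond Y)$ into $X\lor\Diamond(X\lor\Diamond\bot)$) can create several guarded occurrences of $X$ in $\varphi'$. I would overcome this by re-running the two-game comparison of Lemma \ref{lem::intuitionistic-eta_x-varphi_equiv_varphi2} for an arbitrary number of occurrences: the strategy-copying argument between $\mathcal{G}(M,w\models\varphi'(\varphi'(\top)))$ and $\mathcal{G}(M,w\models\varphi'(\varphi'(\varphi'(\top))))$ keeps the two plays synchronized position by position until a play reaches an occurrence of $X$; since every such occurrence is guarded, a modal subformula $\triangle\beta$ has just been crossed and the two worlds in play are related by $\preceq;\equiv$, so Lemma \ref{lem::glue_lemma-intuitionistic} transfers the truth of that modal subformula exactly as in the single-occurrence proof, independently for each branch. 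The same reasoning establishes the two-step stabilization of the descending and ascending approximant chains in the presence of many occurrences, so $\|\nu X^1.\varphi'\|\supseteq\|\nu X^2.\varphi'\|\supseteq\cdots$ is constant from the second step, and dually for $\mu$. Making this generalized two-step stabilization precise is the crux; the remaining bookkeeping, including the persistence check for the augmented models, is routine.
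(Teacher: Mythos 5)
Your proposal follows the paper's route exactly: structural induction, with the propositional and modal cases handled compositionally, and the fixed-point case reduced to Lemma \ref{lem::intuitionistic-eta_x-varphi_equiv_varphi2}. The obstacle you isolate is genuine---and, in fact, the paper's own proof of Theorem \ref{thm::the_collapse} silently ignores it: it applies Lemma \ref{lem::intuitionistic-eta_x-varphi_equiv_varphi2} to the modal formula $\varphi'$ supplied by the induction hypothesis without checking that $X$ still occurs exactly once (and guarded) in $\varphi'$, which, as your example shows, typically fails once the inner fixed points have been eliminated. Your remarks on strengthening the induction hypothesis to preserve polarity, and on upward closure of the sets used to augment the model, are also correct and are details the paper does not spell out.

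However, your repair of the fixed-point case has a real gap, located exactly at what you call the crux. In the single-occurrence proof, the contradiction of Case 2 works because the \emph{same} formula $\triangle\beta(\varphi(\top))$ occurs both as the depth-one guard subformula in $\mathcal{G}_2$ and as the depth-two guard subformula in $\mathcal{G}_3$: the strategy $\sigma$ makes it true at $w'$, the strategy $\tau$ makes it false at $w''$, and Lemma \ref{lem::glue_lemma-intuitionistic} transfers the truth of that one $\triangle$-formula along $w'\preceq;\equiv w''$. With several occurrences $o_1,\dots,o_k$ of $X$, each with its own guard $\triangle_i\beta_i$, the synchronized play follows a single path through the formula tree, and that path may pass through the guard of occurrence $o_i$ at depth one but through the guard of a \emph{different} occurrence $o_j$ at depth two. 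Then $\sigma$ yields $w'\in\|\triangle_i\beta_i(\varphi'(\top))\|$ while $\tau$ yields $w''\notin\|\triangle_j\beta_j(\varphi'(\top))\|$; these are different formulas, so Lemma \ref{lem::glue_lemma-intuitionistic} gives no contradiction, and neither player alone controls which occurrence the play enters, since the choices made while descending through the context are split between Verifier and Refuter. Nor can you instead compare positions at the occurrences of $X$ themselves: $\varphi'(\top)$ need not be of the form $\triangle\psi$, and truth of arbitrary formulas is not preserved along $\equiv$, only along $\preceq$. So the claim that the argument goes through ``exactly as in the single-occurrence proof, independently for each branch'' is unjustified; closing this gap requires a genuinely new ingredient (for example, an argument treating all guards of $X$ simultaneously, or a reduction restoring the single-occurrence hypothesis), not just bookkeeping.
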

\begin{proof}
    We argue by structural induction on $\mu$-formulas.
    First, some of the easy cases.
    $P$ is equivalent to a modal formula, as it is a modal formula.
    Suppose the $\mu$-formulas $\varphi$ and $\psi$ are equivalent to modal formulas $\varphi'$ and $\psi'$, then $\varphi\land\psi$ is equivalent to $\varphi'\land\psi'$, $\varphi\to\psi$ is equivalent to $\varphi'\to\psi'$, $\Box\varphi$ is equivalent to $\Box\varphi'$, $\Diamond\varphi$ is equivalent to $\Diamond\varphi'$.

    Now, the interesting cases.
    As above, $\nu X.\varphi$ is equivalent to $\nu X.\varphi'$, where $\varphi'$ is a modal formula.
    By Lemma \ref{lem::intuitionistic-eta_x-varphi_equiv_varphi2}, $\nu X.\varphi'$ is equivalent to $\varphi'(\varphi'(\top))$, which is a modal formula.
    The same Lemma shows that $\mu X.\varphi'$ is equivalent to $\varphi'(\varphi'(\bot))$.

    Therefore every $\mu$-formula is equivalent to a modal formula over $\isf$-models.
\end{proof}

%%%%%%%%%%%%%%%%%%%%%%%%%%%%%%%%%%%%%%%%%%%%%%%%%%%%%%%%%%%%%%%
\section{The completeness of \texorpdfstring{$\muisf$}{muIS5}}
\label{sec::mu-is5}
%%%%%%%%%%%%%%%%%%%%%%%%%%%%%%%%%%%%%%%%%%%%%%%%%%%%%%%%%%%%%%%

In this section, we prove:
\begin{theorem}
    \label{thm::completeness-muisf}
    For all closed $\mu$-formula $\varphi$, $\muisf$ proves $\varphi$ iff $\varphi$ is true at all $\isf$-models.
\end{theorem}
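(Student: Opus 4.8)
The plan is to prove completeness of $\muisf$ by reducing it to the already-known completeness of $\isf$ over modal formulas, using the collapse established in Theorem \ref{thm::the_collapse}. The soundness direction is routine: I would verify that each fixed-point axiom ($\nu FP$, $\mu FP$) is valid at all $\isf$-models and that the induction rules ($\mathbf{\nu Ind}$, $\mathbf{\mu Ind}$) preserve validity, which follows directly from the Knaster--Tarski characterization of least and greatest fixed-points as stated in the semantics section. So the heart of the matter is completeness: assuming $\varphi$ is valid at all $\isf$-models, I must show $\muisf \vdash \varphi$.

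The key idea is that the collapse is not merely semantic but can be internalized as a \emph{provable} equivalence. First I would prove a syntactic strengthening of Theorem \ref{thm::the_collapse}: for every closed $\mu$-formula $\varphi$ there is a modal formula $\varphi^\ast$ such that $\muisf$ proves $\varphi \leftrightarrow \varphi^\ast$. This is done by induction on the structure of $\varphi$, mirroring the proof of Theorem \ref{thm::the_collapse} but replacing each semantic equivalence with a derivation in $\muisf$. The propositional and modal cases use congruence of provable equivalence under the connectives and modalities (which holds in $\isf$). The crucial cases are $\eta X.\varphi$: having translated the body to a modal $\varphi'$, I must show $\muisf$ proves $\nu X.\varphi' \leftrightarrow \varphi'(\varphi'(\top))$ and $\mu X.\varphi' \leftrightarrow \varphi'(\varphi'(\bot))$. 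One direction of each comes from the fixed-point axioms and monotonicity; the other direction is where the induction rules $\mathbf{\nu Ind}$ and $\mathbf{\mu Ind}$ must be deployed, instantiating $\psi$ with $\varphi'(\varphi'(\top))$ (respectively $\varphi'(\varphi'(\bot))$) and discharging the premise $\varphi'(\varphi'(\varphi'(\top))) \to \varphi'(\varphi'(\top))$ using the semantic collapse together with the completeness of $\isf$.

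Once the provable equivalence $\muisf \vdash \varphi \leftrightarrow \varphi^\ast$ is in hand, completeness follows cleanly. Suppose $\varphi$ is valid at all $\isf$-models. By Theorem \ref{thm::the_collapse}, $\varphi$ is semantically equivalent to the modal formula $\varphi^\ast$, so $\varphi^\ast$ is also valid at all $\isf$-models. By the Ono--Fischer Servi completeness theorem for $\isf$ (restricted to modal formulas), $\isf \vdash \varphi^\ast$, hence $\muisf \vdash \varphi^\ast$ since $\muisf$ extends $\isf$. Combining this with the provable equivalence via $\mathbf{MP}$ gives $\muisf \vdash \varphi$, as required.

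I expect the main obstacle to be establishing the provable equivalences for the fixed-point cases, specifically the backward directions that require the induction rules. The premise needed to apply $\mathbf{\mu Ind}$ or $\mathbf{\nu Ind}$ is exactly the nontrivial inclusion proved semantically (and game-theoretically) in Lemma \ref{lem::intuitionistic-eta_x-varphi_equiv_varphi2}; the delicate point is that this premise is itself a modal-formula implication, so it is valid at all $\isf$-models and therefore provable in $\isf$ by completeness, which legitimizes feeding it into the induction rule. A secondary subtlety is ensuring the syntactic translation respects guardedness and well-namedness so that the fixed-point axioms and rules apply, and handling substitution instances of $\varphi'(\psi)$ correctly when $X$ occurs only once and positively, as required by the hypothesis of Lemma \ref{lem::intuitionistic-eta_x-varphi_equiv_varphi2}.
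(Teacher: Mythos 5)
Your proposal is correct, but its completeness direction takes a genuinely different route from the paper. Both arguments share the same engine: the provable collapse (the paper's Lemma \ref{lem::provable_monotonicity}, Lemma \ref{lem::provable_collapse}, and Theorem \ref{thm::the_provable-collapse_for_CS5}), whose fixed-point cases you handle exactly as the paper does --- the nontrivial premise of the induction rule is a modal implication, valid over $\isf$-models by Lemma \ref{lem::intuitionistic-eta_x-varphi_equiv_varphi2}, hence provable by Ono--Fischer Servi completeness of $\isf$, hence available to feed into $\mathbf{\nu Ind}$/$\mathbf{\mu Ind}$. Where you diverge is what happens afterwards: you reduce completeness of $\muisf$ directly to completeness of $\isf$ (if $\varphi$ is valid, then by soundness applied to the provable equivalence $\varphi\leftrightarrow\varphi^\ast$ the modal formula $\varphi^\ast$ is valid, so $\isf\vdash\varphi^\ast$, so $\muisf\vdash\varphi^\ast$, so $\muisf\vdash\varphi$ by $\mathbf{MP}$), whereas the paper instead builds a canonical model out of $\muisf$-theories, verifies it is an $\isf$-model (Lemmas \ref{lem::equiv_is_equiv}, \ref{lem::equiv_is_confluent}, \ref{lem::mc-is-canonical}), proves a Truth Lemma (Lemma \ref{lem::truth-lemma}) whose fixed-point cases invoke the provable collapse, and reads off a countermodel from a theory omitting $\varphi$. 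Your reduction is shorter and treats $\isf$-completeness as a black box, which is legitimate --- the paper itself already does so inside Lemma \ref{lem::provable_collapse} --- while the paper's construction is more work but self-contained at the level of models and produces an explicit canonical countermodel, machinery of independent interest. Two small repairs to your write-up: the premise for $\mathbf{\nu Ind}$ with $\psi := \varphi'(\varphi'(\top))$ should be $\varphi'(\varphi'(\top)) \to \varphi'(\varphi'(\varphi'(\top)))$ (you wrote the converse, which is the shape of the $\mathbf{\mu Ind}$ premise; both directions are $\isf$-valid, so nothing breaks); and the semantic equivalence of $\varphi$ and your particular $\varphi^\ast$ should be justified by applying Lemma \ref{lem::soundness} to the provable equivalence rather than by citing Theorem \ref{thm::the_collapse}, which only guarantees the existence of \emph{some} semantically equivalent modal formula, not a priori the one your syntactic translation produced.
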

We begin by proving the soundness of $\muisf$.
Then we show a formalized version of the collapse to modal logic.
At last, we use the provable collapse to prove the Truth Lemma for $\muisf$.
Our canonical model argument uses the notation of Balbiani \emph{et al.} \cite{balbiani2021constructive}, but the construction is similar to the canonical model for $\isf$ defined by Fischer Servi \cite{fischerservi1978finite}.

\subsection{Soundness}
The soundness of $\muisf$ is straightforward:
\begin{lemma}
    \label{lem::soundness}
    Fix a $\mu$-formula $\varphi$.
    Then $\varphi\in\mathsf{\mu IS5}$ implies $\varphi$ holds over all $\isf$-models.
\end{lemma}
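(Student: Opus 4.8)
The plan is to prove soundness by the standard induction on the length of derivations in $\muisf$. That is, I would show that every axiom of $\muisf$ is valid over all $\isf$-models, and that each inference rule preserves validity. Since $\muisf$ is obtained from $\isf$ by adding the fixed-point axioms $\nu FP$ and $\mu FP$ together with the induction rules $\mathbf{\nu Ind}$ and $\mathbf{\mu Ind}$, and since $\isf$ is already sound by the Ono--Fischer Servi completeness theorem, the only genuinely new work concerns these four fixed-point principles.

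First I would dispose of the purely modal part. All intuitionistic tautologies, the axioms $K_\Box$, $K_\Diamond$, $FS$, $DP$, $N$, $T$, $4$, $5$, and the rules $\mathbf{Nec}$ and $\mathbf{MP}$ are sound over $\isf$-models; this is exactly the soundness direction of the completeness theorem for $\isf$ stated earlier, so I would simply cite it. Next I would verify the fixed-point axioms. For $\nu FP := \nu X.\varphi \to \varphi(\nu X.\varphi)$, recall from the semantics that $\|\nu X.\varphi\|^M$ is the \emph{greatest} fixed-point of the monotone operator $\Gamma_{\varphi(X)}$, so $\|\nu X.\varphi\|^M = \Gamma_{\varphi(X)}(\|\nu X.\varphi\|^M) = \|\varphi(\nu X.\varphi)\|^M$; in particular $\|\nu X.\varphi\|^M \subseteq \|\varphi(\nu X.\varphi)\|^M$, which is the validity of $\nu FP$ (here I use that the intuitionistic implication holds at every world whenever the antecedent's truth set is contained in the consequent's, which requires checking the $\preceq$-monotonicity built into the valuation of $\to$, but containment of truth sets already gives the needed inclusion at each world since both are fixed points). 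The axiom $\mu FP := \varphi(\mu X.\varphi) \to \mu X.\varphi$ is dual, using that $\|\mu X.\varphi\|^M$ is the \emph{least} fixed-point, hence equal to $\|\varphi(\mu X.\varphi)\|^M$.

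Then I would handle the induction rules. For $\mathbf{\nu Ind}$, suppose $\psi \to \varphi(\psi)$ is valid, so $\|\psi\|^M \subseteq \|\varphi(\psi)\|^M = \Gamma_{\varphi(X)}(\|\psi\|^M)$ for every $\isf$-model $M$, meaning $\|\psi\|^M$ is a \emph{post}-fixed point of $\Gamma_{\varphi(X)}$. By the Knaster--Tarski characterization of the greatest fixed point as the union of all post-fixed points, $\|\psi\|^M \subseteq \|\nu X.\varphi\|^M$, so $\psi \to \nu X.\varphi$ is valid. The rule $\mathbf{\mu Ind}$ is dual: a valid premise $\varphi(\psi)\to\psi$ makes $\|\psi\|^M$ a pre-fixed point, and the least fixed point is contained in every pre-fixed point, giving $\mu X.\varphi \to \psi$. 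I would note that the monotonicity of $\Gamma_{\varphi(X)}$ needed throughout is exactly Proposition \ref{prop::monotoness_of_Gamma_varphi}, and that the positivity of $X$ in $\varphi$ (assumed in the rules) is what licenses invoking it.

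The main subtlety, rather than obstacle, is the bookkeeping between the classical Knaster--Tarski reasoning on truth sets and the intuitionistic reading of implication: I must confirm that ``$\|\alpha\|^M \subseteq \|\beta\|^M$ for all $M$'' really does certify validity of $\alpha\to\beta$ over $\isf$-models. Because the valuation clause for $\to$ quantifies over $\preceq$-successors and truth sets are $\preceq$-upward closed, set inclusion at the level of whole truth sets does deliver membership in $\|\alpha\to\beta\|^M$ at every world, so the translation is clean; I would state this once and reuse it for all four fixed-point principles. No other step presents real difficulty.
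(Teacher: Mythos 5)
Your proposal is correct and follows essentially the same route as the paper's proof: cite the soundness direction of the Ono--Fischer Servi theorem for the $\isf$ axioms and rules, then verify $\nu FP$, $\mu FP$, $\mathbf{\nu Ind}$, and $\mathbf{\mu Ind}$ via the Knaster--Tarski characterization of least/greatest fixed points of the monotone operator $\Gamma_{\varphi(X)}$, with monotonicity supplied by Proposition \ref{prop::monotoness_of_Gamma_varphi}. The paper writes out only the $\nu$ cases and declares the $\mu$ cases analogous, and it silently uses the fact (which you verify explicitly, and which actually needs only the semantic clause for $\to$ together with reflexivity of $\preceq$, not upward closure) that validity of an implication corresponds to inclusion of truth sets.
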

\begin{proof}
    We will prove only the soundness of the axiom $\nu FP$ and rule $\mathbf{\nu Ind}$; the soundness of $\mu FP$ and rule $\mathbf{\mu Ind}$ are analogous.
    The soundness of these axioms and rules will follow from Lemma \ref{prop::monotoness_of_Gamma_varphi} and basic properties of monotone operators (see also \cite{arnold2001rudiments}).
    For the soundness of the axioms in $\isf$, see Fischer Servi \cite{fischerservi1978finite} and Ono \cite{ono1977intuitionistic}.

    Fix an $\isf$-model $M = \tuple{W,R,V}$.
    Suppose $w\models\nu X.\varphi$; that is, $w$ is in the greatest fixed-point $\|\nu X.\varphi\|$ of $\Gamma_{\varphi(X)}$.
    Therefore, $w\in \Gamma_{\varphi(X)}( \|\nu X.\varphi\| )$.
    And so $w\models\varphi(\nu X.\varphi)$.
    This implies $\nu FP$ is sound.
    Now, suppose $\psi\to\varphi(\psi)$ holds on every world in $W$.
    Then $\|\psi\| \subseteq \|\varphi(\psi)\|$.
    As $\|\nu X.\varphi\|$ is the greatest fixed-point of $\Gamma_{\varphi(X)}$, $\|\psi\| \subseteq \|\nu X.\varphi\|$ too.
    Therefore $\psi\to\nu X.\varphi$ holds in every world in $W$, and so $\mathbf{\nu Ind}$ is sound.
\end{proof}

\subsection{The provable collapse}
We now show that any $\mu$-formula is provably equivalent to a modal formula in $\muisf$.

We first prove a technical lemma showing that monotonicity for formulas without fixed-point operators is provable.
\begin{lemma}
    \label{lem::provable_monotonicity}
    Suppose $A\to B\in \mathsf{\mu IS5}$ and $\varphi(X)$ is a formula without fixed-point operators.
    If $X$ is positive in $\varphi(X)$, then $\varphi(A)\to\varphi(B)\in\mathsf{\mu IS5}$.
    If $X$ is negative in $\varphi(X)$, then $\varphi(B)\to\varphi(A)\in\mathsf{\mu IS5}$.
\end{lemma}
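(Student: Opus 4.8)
The plan is to argue by structural induction on the formula $\varphi(X)$, which is built up from propositional symbols, $\top$, $\bot$, the variable $X$, and the connectives $\land,\lor,\to,\neg,\Box,\Diamond$ (no fixed-point operators, by hypothesis). The induction hypothesis states exactly the two implications to be proved (the positive one and the negative one), so that the clauses for $\neg$ and $\to$, which flip polarity, can feed the opposite half of the hypothesis back into the argument. I would carry the positive and negative statements simultaneously, exactly mirroring the structure of the semantic monotonicity argument in Proposition \ref{prop::monotoness_of_Gamma_varphi}, but now producing \emph{proofs} in $\muisf$ rather than inclusions of valuations.

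\begin{proof}[Proof sketch]
    The argument is by structural induction on $\varphi(X)$, proving simultaneously that if $X$ is positive then $\varphi(A)\to\varphi(B)\in\muisf$, and if $X$ is negative then $\varphi(B)\to\varphi(A)\in\muisf$.

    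In the base cases, if $\varphi$ is a proposition symbol $P$, the constant $\top$, $\bot$, or a variable $Y\neq X$, then $X$ is both positive and negative, and $\varphi(A)$ and $\varphi(B)$ are literally the same formula, so the required implication is the intuitionistic tautology $\theta\to\theta$, which lies in $\muisf$. If $\varphi$ is $X$ itself, then $X$ is positive, $\varphi(A)=A$ and $\varphi(B)=B$, and the required implication $A\to B$ is the hypothesis.

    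For the inductive steps I would treat each connective in turn. For $\land$ and $\lor$, polarity is preserved in both arguments, and one obtains $\varphi(A)\to\varphi(B)$ from the two sub-implications by the intuitionistic tautologies governing conjunction and disjunction (monotonicity of $\land$ and $\lor$). For $\Box$ and $\Diamond$, polarity is again preserved, and one applies the induction hypothesis to get $\psi(A)\to\psi(B)\in\muisf$, then closes under $\mathbf{Nec}$ to obtain $\Box(\psi(A)\to\psi(B))$, and finally invokes the distribution axioms $K_\Box$ and $K_\Diamond$ together with $\mathbf{MP}$ to derive $\Box\psi(A)\to\Box\psi(B)$ and $\Diamond\psi(A)\to\Diamond\psi(B)$. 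The negation case $\neg\psi$ flips polarity: if $X$ is positive in $\neg\psi$ then $X$ is negative in $\psi$, so the induction hypothesis yields $\psi(B)\to\psi(A)\in\muisf$, whence the intuitionistic contraposition tautology gives $\neg\psi(A)\to\neg\psi(B)$. The implication case $\theta\to\chi$ combines both halves: if $X$ is positive in $\theta\to\chi$, then $X$ is negative in $\theta$ and positive in $\chi$, so the induction hypothesis delivers $\theta(A)\to\theta(B)$ and $\chi(A)\to\chi(B)$ in $\muisf$, and the intuitionistically valid monotonicity principle for implication assembles these into $(\theta\to\chi)(A)\to(\theta\to\chi)(B)$. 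Throughout, the negative statement is handled by the dual computation.

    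I expect the main obstacle to be purely bookkeeping rather than conceptual: one must verify that each assembling step is an \emph{intuitionistic} tautology (so that it belongs to $\muisf$), since the contraposition and implication-monotonicity principles that are trivial classically must be checked to survive in the intuitionistic base. The $\Box$ and $\Diamond$ steps are the only places where genuinely modal reasoning enters, and these go through cleanly using only $\mathbf{Nec}$, $K_\Box$, and $K_\Diamond$, all of which are available already in $\ck\subseteq\muisf$. Since $\varphi$ contains no fixed-point operators, no fixed-point axiom or induction rule is needed, which keeps the induction entirely within the modal fragment.
\end{proof}
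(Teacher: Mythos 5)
Your overall strategy is exactly the paper's: structural induction carrying the positive and negative statements simultaneously, with $\mathbf{Nec}$, $K_\Box$, $K_\Diamond$ and $\mathbf{MP}$ handling the modal cases and intuitionistic tautologies assembling the propositional ones. However, your implication case contains a step that fails as written. You correctly note that if $X$ is positive in $\theta\to\chi$ then $X$ is \emph{negative} in $\theta$, but then you claim the induction hypothesis delivers $\theta(A)\to\theta(B)$. It does not: by your own formulation of the induction hypothesis, the negative half delivers $\theta(B)\to\theta(A)$. This is not a cosmetic issue, because the "monotonicity principle for implication" you invoke does not exist for the premises you wrote: from $\theta(A)\to\theta(B)$ and $\chi(A)\to\chi(B)$ one cannot derive $(\theta(A)\to\chi(A))\to(\theta(B)\to\chi(B))$ — this is invalid even classically (take $\theta(A)$, $\chi(A)$, $\chi(B)$ false and $\theta(B)$ true). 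Implication is \emph{antitone} in its antecedent, which is the entire reason the polarity bookkeeping exists.

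The repair is immediate and stays inside your framework: use the flipped implication $\theta(B)\to\theta(A)$ that the negative induction hypothesis actually provides, together with $\chi(A)\to\chi(B)$, and the assembling formula $[(\theta(B)\to\theta(A))\land(\chi(A)\to\chi(B))]\to[(\theta(A)\to\chi(A))\to(\theta(B)\to\chi(B))]$, which \emph{is} an intuitionistic tautology; then conclude by $\mathbf{MP}$. With this correction your proof coincides with the paper's (which uses precisely the ingredients $\psi(B)\to\psi(A)$ and $\theta(A)\to\theta(B)$ in this case); the rest of your cases, including the base cases, $\land$, $\lor$, $\neg$, $\Box$, and $\Diamond$, are correct and match the paper's argument.
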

\begin{proof}
We prove this lemma using structural induction.
We prove only the cases where $X$ is positive, as the cases where $X$ is negative are similar.
\begin{itemize}
    \item For $\varphi$ is a proposition symbol or a variable symbol, then the result is immediate.

    \item Let $\varphi= \psi\lor \theta$.
    By the induction hypothesis, $\psi(A)\to\psi(B)\in\mathsf{\mu IS5}$ and $\theta(A)\to\theta(B)\in\mathsf{\mu IS5}$.
    As $[(\psi(A)\to\psi(B)) \land (\theta(A)\to\theta(B))] \to (\varphi(A)\to\varphi(B))$ is a tautology, $\varphi(A)\to\varphi(B)\in\mathsf{\mu IS5}$ follows by $\mathbf{MP}$.

    \item Let $\varphi= \psi\land \theta$.
    By the induction hypothesis, $\psi(A)\to\psi(B)\in\mathsf{\mu IS5}$ and $\theta(A)\to\theta(B)\in\mathsf{\mu IS5}$.
    As $[(\psi(A)\to\psi(B)) \land (\theta(A)\to\theta(B))] \to (\varphi(A)\to\varphi(B))$ is a tautology, $\varphi(A)\to\varphi(B)\in\mathsf{\mu IS5}$ follows by $\mathbf{MP}$.

    \item Let $\varphi = \Box\psi$.
    By the induction hypothesis, $\psi(A)\to \psi(B)\in\mathsf{\mu IS5}$.
    Then $\Box(\psi(A)\to \psi(B))\in\mathsf{\mu IS5}$ by $\mathbf{Nec}$, and so $\Box\psi(A)\to \Box\psi(B)\in\mathsf{\mu IS5}$ by $K$ and $\mathbf{MP}$.

    \item Let $\varphi = \Diamond\psi$.
    By the induction hypothesis, $\psi(A)\to \psi(B)\in\mathsf{\mu IS5}$.
    Then $\Box(\psi(A)\to \psi(B))\in\mathsf{\mu IS5}$ by $\mathbf{Nec}$, and so $\Diamond\psi(A)\to \Diamond\psi(B)\in\mathsf{\mu IS5}$ by $K$ and $\mathbf{MP}$.

    \item Let $\varphi = \neg\psi$.
    Then $X$ is negative in $\psi$ and so $\psi(B)\to\psi(A)\in\mathsf{\mu IS5}$ by the induction hypothesis.
    Since $(\psi(B)\to\psi(A)) \to (\neg\psi(A)\to\neg\psi(B))$ is a tautology, $\neg\psi(A)\to\neg\psi(B)\in\mathsf{\mu IS5}$ too.

    \item Let $\varphi = \psi\to\theta$.
    Then $X$ is negative in $\psi$ and positive in $\theta$.
    So $\psi(B)\to\psi(A)\in\mathsf{\mu IS5}$ and $\theta(A)\to\theta(B)\in\mathsf{\mu IS5}$ by the induction hypothesis.
    Since $[(\psi(B)\to\psi(A)) \land (\theta(A)\to\theta(B))] \to (\neg\psi(A)\to\neg\psi(B))$ is a tautology, $\varphi(A)\to\varphi(B)\in\mathsf{\mu IS5}$ too. \qedhere
\end{itemize}
\end{proof}

Now, we show that fixed-points of modal formulas are equivalent to modal formulas over $\muisf$.
This is a formal version of Lemma \ref{lem::intuitionistic-eta_x-varphi_equiv_varphi2}.
\begin{lemma}
    \label{lem::provable_collapse}
    If $\varphi$ has no fixed-point operators, then $\nu X.\varphi\leftrightarrow\varphi(\varphi(\top))\in\mathsf{\mu IS5}$ and $\mu X.\varphi\leftrightarrow\varphi(\varphi(\bot))\in\mathsf{\mu IS5}$.
\end{lemma}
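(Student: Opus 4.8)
The plan is to prove each biconditional by its two implications separately. In both the $\nu$ and the $\mu$ case one implication is ``easy'' and follows from the relevant fixed-point axiom together with the provable monotonicity of Lemma \ref{lem::provable_monotonicity}, while the other is ``hard'' and comes from the relevant induction rule, whose premise I will obtain semantically rather than by a direct derivation.

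For the easy direction of the $\nu$ case I would start from the axiom $\nu FP$, that is $\nu X.\varphi \to \varphi(\nu X.\varphi)$. Since $\nu X.\varphi \to \top$ is an intuitionistic tautology and $X$ is positive in $\varphi$, Lemma \ref{lem::provable_monotonicity} yields $\varphi(\nu X.\varphi)\to\varphi(\top)$, hence $\nu X.\varphi\to\varphi(\top)$ by transitivity. Applying Lemma \ref{lem::provable_monotonicity} once more to this implication gives $\varphi(\nu X.\varphi)\to\varphi(\varphi(\top))$, and chaining with $\nu FP$ produces $\nu X.\varphi\to\varphi(\varphi(\top))$. The easy direction of the $\mu$ case is dual: starting from $\mu FP$ and the tautology $\bot\to\mu X.\varphi$, two applications of Lemma \ref{lem::provable_monotonicity} give $\varphi(\varphi(\bot))\to\mu X.\varphi$.

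For the hard directions the key observation is that the premise needed by each induction rule is an implication between genuine modal formulas, so its provability can be reduced to its validity. To obtain $\varphi(\varphi(\top))\to\nu X.\varphi$ via $\mathbf{\nu Ind}$ with $\psi:=\varphi(\varphi(\top))$, I need the premise $\varphi(\varphi(\top))\to\varphi(\varphi(\varphi(\top)))$. As $\varphi$ has no fixed-point operators this is a modal formula, and by Lemma \ref{lem::intuitionistic-eta_x-varphi_equiv_varphi2} both $\varphi(\varphi(\top))$ and $\varphi(\varphi(\varphi(\top)))$ have valuation $\|\nu X.\varphi\|$ in every $\isf$-model, so the implication is valid over all $\isf$-models. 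By the completeness theorem for $\isf$ (Ono, Fischer Servi) it is then provable in $\isf$, hence in $\muisf$, and $\mathbf{\nu Ind}$ delivers $\varphi(\varphi(\top))\to\nu X.\varphi$. The dual argument, using $\mathbf{\mu Ind}$ with $\psi:=\varphi(\varphi(\bot))$ and the valid modal implication $\varphi(\varphi(\varphi(\bot)))\to\varphi(\varphi(\bot))$, gives $\mu X.\varphi\to\varphi(\varphi(\bot))$.

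I expect the induction premise $\varphi(\varphi(\top))\to\varphi(\varphi(\varphi(\top)))$ (and its $\mu$-dual) to be the main obstacle. A direct Hilbert-style derivation would essentially have to internalize the ``glue'' argument of Lemma \ref{lem::glue_lemma-intuitionistic} together with the $\isf$ stability principles $\triangle\psi\leftrightarrow\Box\triangle\psi$ supplied by the transitivity and Euclidean axioms, which is delicate; routing the premise through the already-established semantic collapse and the completeness of $\isf$ sidesteps this entirely and keeps the derivation short. The only point I would be careful to check is that the hypotheses of Lemma \ref{lem::intuitionistic-eta_x-varphi_equiv_varphi2} are in force, namely that $X$ is positive and occurs guarded in $\varphi$, which holds for the well-named formulas to which the lemma is applied.
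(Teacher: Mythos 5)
Your proposal is correct and takes essentially the same route as the paper: the forward implications via the fixed-point axioms combined with two applications of Lemma~\ref{lem::provable_monotonicity}, and the converse implications via $\mathbf{\nu Ind}$/$\mathbf{\mu Ind}$, whose premises $\varphi(\varphi(\top))\to\varphi(\varphi(\varphi(\top)))$ and $\varphi(\varphi(\varphi(\bot)))\to\varphi(\varphi(\bot))$ are obtained from their semantic validity over $\isf$-models (Lemma~\ref{lem::intuitionistic-eta_x-varphi_equiv_varphi2}) together with the completeness of $\isf$. This matches the paper's proof step for step, including your closing caveat about the hypotheses of Lemma~\ref{lem::intuitionistic-eta_x-varphi_equiv_varphi2}, which the paper leaves implicit.
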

\begin{proof}
    $\nu X.\varphi\to\top$ is in $\muisf$ as it is a tautology.
    By Lemma \ref{lem::provable_monotonicity}, we get that $\varphi(\nu X.\varphi) \to \varphi(\top)$.
    By $\mu FP$, $\nu X.\varphi\to\varphi(\nu X.\varphi)$ is in $\muisf$.
    By $\mathbf{MP}$, we have that $\nu X.\varphi \to \varphi(\top)$.
    By repeating this argument once, we get $\nu X.\varphi \to \varphi(\varphi(\top))$.

    Now, as $\varphi(\varphi(\top))\to \varphi(\varphi(\varphi(\top)))$ is valid on any $\isf$-model, $\varphi(\varphi(\top))\to \varphi(\varphi(\varphi(\top)))$ is provable $\muisf$ by the completeness of $\isf$.
    By $\mathbf{\nu Ind}$, $\varphi(\varphi(\top)) \to \nu X.\varphi$.

    The proof for $\mu X.\varphi$ is similar.
\end{proof}

Similar to how we proved Theorem \ref{thm::the_collapse}, we use  Lemma \ref{lem::provable_collapse} to prove the following theorem:
\begin{theorem}
    \label{thm::the_provable-collapse_for_CS5}
    Any $\mu$-formula is provably equivalent to a modal formula over $\muisf$.
    That is, for all $\mu$-formula $\varphi$, there is a modal formula $\varphi'$ such that $\varphi\leftrightarrow\varphi'$ is in $\muisf$.
\end{theorem}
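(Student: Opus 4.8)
The plan is to prove Theorem \ref{thm::the_provable-collapse_for_CS5} by structural induction on the $\mu$-formula $\varphi$, mirroring exactly the semantic argument used for Theorem \ref{thm::the_collapse}, but replacing every appeal to semantic equivalence by an appeal to provable equivalence in $\muisf$. The induction hypothesis is that for each strict subformula there is a provably equivalent modal formula. For the propositional base case $\varphi = P$, the formula is already modal, so we take $\varphi' := P$. For a variable symbol treated as a proposition, the same applies.

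For the inductive step, I would handle the Boolean and modal connectives first, as these are routine. Suppose $\psi$ and $\theta$ are provably equivalent to modal formulas $\psi'$ and $\theta'$, so $\psi\leftrightarrow\psi'$ and $\theta\leftrightarrow\theta'$ are in $\muisf$. Then for each connective $\star \in \{\land, \lor, \to\}$ and each modality $\triangle \in \{\Box, \Diamond\}$, the congruence of provable equivalence gives that $\psi\star\theta$ is provably equivalent to $\psi'\star\theta'$ and that $\triangle\psi$ is provably equivalent to $\triangle\psi'$. These congruences follow from propositional reasoning together with $\mathbf{Nec}$, $K_\Box$, $K_\Diamond$, and $\mathbf{MP}$ --- essentially the same manipulations already carried out in the proof of Lemma \ref{lem::provable_monotonicity}. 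The resulting formulas $\psi'\star\theta'$ and $\triangle\psi'$ are modal, completing these cases.

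The interesting cases are the fixed-point operators $\eta X.\varphi$. Here I would proceed exactly as in Theorem \ref{thm::the_collapse}: by the induction hypothesis, $\varphi$ is provably equivalent to a modal formula $\varphi'$, and this provable equivalence propagates to the fixed-point formulas, so $\nu X.\varphi$ is provably equivalent to $\nu X.\varphi'$ and $\mu X.\varphi$ is provably equivalent to $\mu X.\varphi'$. I would then invoke Lemma \ref{lem::provable_collapse}, which applies precisely because $\varphi'$ has no fixed-point operators, to conclude that $\nu X.\varphi'$ is provably equivalent to the modal formula $\varphi'(\varphi'(\top))$, and $\mu X.\varphi'$ is provably equivalent to $\varphi'(\varphi'(\bot))$. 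Composing these provable equivalences yields a modal formula provably equivalent to $\eta X.\varphi$.

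The main obstacle, and the step requiring the most care, is establishing that provable equivalence is a congruence --- in particular that passing from $\varphi\leftrightarrow\varphi'$ to $\eta X.\varphi \leftrightarrow \eta X.\varphi'$ is derivable in $\muisf$. Unlike the Boolean and modal cases, this does not follow from a simple tautology plus $\mathbf{Nec}$; it requires combining the fixed-point axioms $\nu FP$, $\mu FP$ with the induction rules $\mathbf{\nu Ind}$, $\mathbf{\mu Ind}$, much as in the derivation inside Lemma \ref{lem::provable_collapse}. Concretely, to show $\nu X.\varphi \to \nu X.\varphi'$, one uses $\nu FP$ to get $\nu X.\varphi \to \varphi(\nu X.\varphi)$, then the provable monotonicity-style step to replace $\varphi$ by $\varphi'$, and finally $\mathbf{\nu Ind}$ to fold the result back into $\nu X.\varphi'$; the converse direction is symmetric. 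Once this congruence is secured, the theorem follows by gluing the inductive steps together.
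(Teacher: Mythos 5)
Your proposal is correct and takes essentially the same approach as the paper, which proves this theorem exactly by structural induction as in Theorem \ref{thm::the_collapse}, with Lemma \ref{lem::provable_collapse} supplying the fixed-point cases. If anything, you give more detail than the paper (which leaves the argument as a one-line remark), in particular by spelling out how the congruence $\eta X.\varphi\leftrightarrow\eta X.\varphi'$ is derived from the fixed-point axioms $\nu FP$, $\mu FP$, substitution into the inductive equivalence, and the rules $\mathbf{\nu Ind}$, $\mathbf{\mu Ind}$.
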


\subsection{The canonical model}
We say $\Gamma$ is a \emph{$\muisf$-theory} if:
    $\Gamma$ is a set of formulas containing all the axioms of $\muisf$;
    $\Gamma$ is closed under under $\mathbf{MP}$;
    $\bot\not\in\Gamma$; and
    $\varphi\lor\psi\in \Gamma$ implies $\varphi\in\Gamma$ or $\psi\in\Gamma$.
Denote by $\Gamma^\Diamond$ the set $\{\varphi\mid \Diamond \varphi\in\Gamma\}$ and by $\Gamma^\Box$ the set $\{\varphi\mid \Box \varphi\in\Gamma\}$.
Define the \emph{canonical $\muisf$-model $M_c := \tuple{W_c, W^\bot_c, \preceq_c,\equiv_c, V_c}$} by:
\begin{itemize}
    \item $W_c := \{ \Gamma \mid \Gamma\text{ is a $\muisf$-theory} \}$;
    \item $W^\bot_c = \emptyset$;
    \item $\Gamma \preceq_c \Delta$ iff $\Gamma\subseteq\Delta$;
    \item $\Gamma \equiv_c \Delta$ iff $\Delta\subseteq \Gamma^\Diamond$ and $\Gamma^\Box \subseteq \Delta$; and
    \item $\Gamma\in V_c(\varphi)$ iff $P\in\Gamma$.
\end{itemize}

\begin{lemma}
    \label{lem::equiv_is_equiv}
    Let $M_c := \tuple{W_c, W^\bot_c, \preceq_c,\equiv_c, V_c}$ be the canonical $\muisf$-model.
    The relation $\equiv_c$ is an equivalence relation.
\end{lemma}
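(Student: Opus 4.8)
The plan is to verify the three defining properties of an equivalence relation — reflexivity, symmetry, and transitivity — separately, in each case reducing the set-inclusion conditions in the definition of $\equiv_c$ to single applications of the $\isf$-axioms $T$, $4$, and $5$. The one structural fact I would use throughout is that every $\muisf$-theory $\Gamma$ contains all axioms of $\isf$ (since $\muisf$ extends $\isf$) and is closed under $\mathbf{MP}$; hence whenever $A\to B$ is an axiom instance and $A\in\Gamma$, we also have $B\in\Gamma$. For reflexivity this immediately gives both inclusions from the two halves of $T$: if $\varphi\in\Gamma$ then $\varphi\to\Diamond\varphi\in\Gamma$ yields $\Diamond\varphi\in\Gamma$, i.e.\ $\varphi\in\Gamma^\Diamond$, so $\Gamma\subseteq\Gamma^\Diamond$; and if $\Box\varphi\in\Gamma$ then $\Box\varphi\to\varphi\in\Gamma$ yields $\varphi\in\Gamma$, so $\Gamma^\Box\subseteq\Gamma$.

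For symmetry I would assume $\Gamma\equiv_c\Delta$, i.e.\ $\Delta\subseteq\Gamma^\Diamond$ and $\Gamma^\Box\subseteq\Delta$, and derive $\Gamma\subseteq\Delta^\Diamond$ and $\Delta^\Box\subseteq\Gamma$ using $T$ together with $5$. For $\Gamma\subseteq\Delta^\Diamond$: if $\varphi\in\Gamma$ then $\Diamond\varphi\in\Gamma$ (by $T$), and then the half $\Diamond\varphi\to\Box\Diamond\varphi$ of $5$ gives $\Box\Diamond\varphi\in\Gamma$, so $\Diamond\varphi\in\Gamma^\Box\subseteq\Delta$. For $\Delta^\Box\subseteq\Gamma$: if $\Box\varphi\in\Delta\subseteq\Gamma^\Diamond$ then $\Diamond\Box\varphi\in\Gamma$, whence $\Box\varphi\in\Gamma$ by the half $\Diamond\Box\varphi\to\Box\varphi$ of $5$, and finally $\varphi\in\Gamma$ by $T$.

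For transitivity I would assume $\Gamma\equiv_c\Delta$ and $\Delta\equiv_c\Theta$ and appeal to $4$. For $\Gamma^\Box\subseteq\Theta$: if $\Box\varphi\in\Gamma$ then $\Box\Box\varphi\in\Gamma$ by $\Box\varphi\to\Box\Box\varphi$, so $\Box\varphi\in\Gamma^\Box\subseteq\Delta$, i.e.\ $\varphi\in\Delta^\Box\subseteq\Theta$. For $\Theta\subseteq\Gamma^\Diamond$: if $\varphi\in\Theta\subseteq\Delta^\Diamond$ then $\Diamond\varphi\in\Delta\subseteq\Gamma^\Diamond$, so $\Diamond\Diamond\varphi\in\Gamma$, whence $\Diamond\varphi\in\Gamma$ by $\Diamond\Diamond\varphi\to\Diamond\varphi$, giving $\varphi\in\Gamma^\Diamond$.

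I do not expect a genuine obstacle here: all the mathematical content is carried by the $\mathsf{S5}$ axioms $T$, $4$, and $5$, which belong to every $\muisf$-theory, so the argument is essentially bookkeeping once the closure-under-$\mathbf{MP}$ observation is in place. The only point that requires care is matching each of the four inclusions (two for symmetry, two for transitivity) with the correct half of $4$ or $5$ and the correct direction of $\Box$ versus $\Diamond$, since the definition of $\equiv_c$ deliberately mixes a $\Diamond$-condition ($\Delta\subseteq\Gamma^\Diamond$) with a $\Box$-condition ($\Gamma^\Box\subseteq\Delta$); this asymmetry is where a sign error would most easily creep in, so I would double-check each derivation against the exact statement of the axioms.
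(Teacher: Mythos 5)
Your proposal is correct and follows essentially the same argument as the paper: reflexivity from the two halves of $T$, symmetry from $T$ together with both halves of $5$, and transitivity from both halves of $4$, all via closure of theories under $\mathbf{MP}$. The only difference is the order in which symmetry and transitivity are treated, which is immaterial.
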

\begin{proof}
    By $T$ and $\mathbf{MP}$, $\varphi\in\Gamma$ implies $\Diamond\varphi\in\Gamma$ and $\Box\varphi\in\Gamma$ implies $\varphi\in\Gamma$.
    So $\Gamma\subseteq \Gamma^\Diamond$ and $\Gamma^\Box\subseteq\Gamma$. Thus $\Gamma\equiv_c\Gamma$.

    Let $\Gamma \equiv_c \Delta \equiv_c \Sigma$.
    Then $\Delta \subseteq \Gamma^\Diamond$, $\Gamma^\Box\subseteq\Delta$, $\Sigma\subseteq\Delta^\Diamond$, and $\Delta^\Box\subseteq\Sigma$.
    Suppose $\varphi\in \Gamma^\Box$, then $\Box\varphi\in\Gamma$ and $\Box\Box\varphi\in\Gamma$, so $\Box\varphi\in\Gamma^\Box\subseteq \Delta$.
    Thus $\varphi\in\Sigma$.
    Suppose $\varphi\in\Sigma$, then $\varphi\in\Delta^\Diamond$ and $\Diamond\varphi\in\Delta\subseteq \Gamma^\Diamond$.
    So $\Diamond\Diamond\varphi\in\Gamma$.
    By $4$ and $\mathbf{MP}$, $\Diamond\varphi\in\Gamma$ and so $\varphi\in\Gamma^\Diamond$.
    Therefore $\Gamma\equiv_c\Sigma$.

    Let $\Gamma \equiv_c\Delta$, then $\Delta \subseteq \Gamma^\Diamond$ and $\Gamma^\Box\subseteq\Delta$.
    We want to show $\Gamma \subseteq \Delta^\Diamond$, $\Delta^\Box\subseteq\Gamma$.
    Let $\varphi\in\Gamma$.
    By $T$ and $\mathbf{MP}$, $\Diamond\varphi\in\Gamma$.
    By $5$, $\Box\Diamond\varphi\in\Gamma$, so $\Diamond\varphi\in\Gamma^\Box\subseteq\Delta$.
    Thus $\varphi\in\Delta^\Diamond$.
    Now, suppose $\varphi\in\Delta^\Box$.
    So $\Box\varphi\in\Delta\subseteq \Gamma^\diamond$.
    Thus $\Diamond\Box\varphi\in\Gamma$.
    By $5$, $\Box\varphi\in \Gamma$ and so $\varphi\in\Gamma$.
    Therefore $\Delta \equiv_c \Gamma$.
\end{proof}

\begin{lemma}
    \label{lem::equiv_is_confluent}
    Let $M_c := \tuple{W_c, W^\bot_c, \preceq_c,\equiv_c, V_c}$ be the canonical $\muisf$-model.
    The relation $\equiv_c$ is backward confluent.
    That is, if $\Gamma,\Delta,\Sigma\in W_c$ and $\Gamma\equiv_c\Delta\preceq_c\Sigma$, then there is $\Phi\in W_c$ such that $\Gamma\preceq_c\Phi\equiv_c\Sigma$
\end{lemma}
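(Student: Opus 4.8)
The plan is to build the required $\Phi$ as a prime $\muisf$-theory squeezed between a lower and an upper bound determined by $\Gamma$ and $\Sigma$. Since $\equiv_c$ is an equivalence relation by Lemma~\ref{lem::equiv_is_equiv}, the goal $\Phi\equiv_c\Sigma$ is the same as $\Sigma\equiv_c\Phi$, which by definition means $\Phi\subseteq\Sigma^\Diamond$ and $\Sigma^\Box\subseteq\Phi$. Together with $\Gamma\preceq_c\Phi$ (that is, $\Gamma\subseteq\Phi$), the whole statement reduces to finding a prime theory $\Phi$ with
\[
    \Gamma\cup\Sigma^\Box\ \subseteq\ \Phi\ \subseteq\ \Sigma^\Diamond .
\]
First I would check that these bounds are compatible, i.e.\ $\Gamma\cup\Sigma^\Box\subseteq\Sigma^\Diamond$. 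For $\gamma\in\Gamma$, axiom $T$ gives $\Diamond\gamma\in\Gamma$ and then $5$ gives $\Box\Diamond\gamma\in\Gamma$, so $\Diamond\gamma\in\Gamma^\Box\subseteq\Delta\subseteq\Sigma$ using $\Gamma\equiv_c\Delta$ and $\Delta\preceq_c\Sigma$; hence $\gamma\in\Sigma^\Diamond$. For $\varphi\in\Sigma^\Box$, two applications of $T$ give first $\varphi\in\Sigma$ and then $\Diamond\varphi\in\Sigma$, so $\varphi\in\Sigma^\Diamond$.

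Next I would obtain $\Phi$ from a Lindenbaum-style pair-extension lemma, in the form used by Balbiani \emph{et al.}~\cite{balbiani2021constructive}: it suffices to show that no finite disjunction of formulas from $B:=\{\varphi\mid\Diamond\varphi\notin\Sigma\}$ is derivable from $A:=\Gamma\cup\Sigma^\Box$, for then $A$ extends to a prime theory $\Phi$ disjoint from $B$, which is exactly $\Gamma\cup\Sigma^\Box\subseteq\Phi\subseteq\Sigma^\Diamond$. Suppose for contradiction that $\bigwedge A_0\to\bigvee_{i\le k}b_i$ is provable for some finite $A_0\subseteq A$ and $b_1,\dots,b_k\in B$. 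Collecting the $\Gamma$-conjuncts into a single $\gamma\in\Gamma$ and the $\Sigma^\Box$-conjuncts into a single $\delta$ with $\Box\delta\in\Sigma$ (using that $\Box$ distributes over $\land$), I would apply $\mathbf{Nec}$ and $K_\Diamond$ to get $\Diamond(\gamma\land\delta)\to\Diamond\bigvee_i b_i$, and then $DP$ to obtain $\Diamond(\gamma\land\delta)\to\bigvee_i\Diamond b_i$.

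The crux is to show $\Diamond(\gamma\land\delta)\in\Sigma$. From the tautology $\delta\to(\gamma\to(\gamma\land\delta))$, $\mathbf{Nec}$ and $K_\Box$ yield $\Box\delta\to\Box(\gamma\to(\gamma\land\delta))$, and $K_\Diamond$ yields $\Box(\gamma\to(\gamma\land\delta))\to(\Diamond\gamma\to\Diamond(\gamma\land\delta))$; chaining these gives $\Box\delta\land\Diamond\gamma\to\Diamond(\gamma\land\delta)$. Since $\Diamond\gamma\in\Sigma$ (shown above) and $\Box\delta\in\Sigma$, $\mathbf{MP}$ gives $\Diamond(\gamma\land\delta)\in\Sigma$, hence $\bigvee_i\Diamond b_i\in\Sigma$; primeness of $\Sigma$ then forces some $\Diamond b_i\in\Sigma$, contradicting $b_i\in B$. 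The degenerate case $k=0$ (inconsistency of $A$) is handled identically and ends with $\Diamond\bot\in\Sigma$, which contradicts axiom $N$. I expect this gluing computation to be the main obstacle: over constructive semantics the passage between $\Diamond$, $\land$, and $\lor$ is not free, so it must be driven explicitly by $K_\Box$, $K_\Diamond$, $DP$, and $N$ rather than by classical duality.
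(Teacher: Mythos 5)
Your proof is correct, but it takes a genuinely different route from the paper's. The paper enforces the two halves of $\Phi\equiv_c\Sigma$ in their original orientation: it seeds $\Phi$ with the closure of $\Gamma\cup\{\Diamond\varphi\mid\varphi\in\Sigma\}$ under $\mathbf{MP}$ (which guarantees $\Sigma\subseteq\Phi^\Diamond$), runs a Zorn argument over consistent extensions satisfying the side condition ``$\Box\varphi\in\Phi$ implies $\varphi\in\Sigma$'' --- whose preservation is checked via the derivation $\bigwedge_{j}\Box\Diamond\chi_j\to(\Diamond\bigwedge_{i}\psi_i\to\Diamond\Box\varphi)$ followed by the axiom-$5$ half $\Diamond\Box\varphi\to\Box\varphi$ --- and then proves primeness of the maximal $\Phi$ by hand, using $DP$, $N$, $T$, $5$ and maximality. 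You instead use the symmetry of $\equiv_c$ (Lemma~\ref{lem::equiv_is_equiv}) to flip the target into $\Sigma^\Box\subseteq\Phi\subseteq\Sigma^\Diamond$, seed with $\Gamma\cup\Sigma^\Box$, and delegate both the Zorn construction and primeness to a pair-extension lemma, concentrating the modal content in the single $\ck$-derivable implication $\Box\delta\land\Diamond\gamma\to\Diamond(\gamma\land\delta)$ together with $DP$, $N$, and primeness of $\Sigma$; your refutation step is dual to the paper's, splitting a disjunction inside $\Sigma$ rather than extracting $\Box\varphi$ from $\Phi$. Both arguments are sound, and each buys something: yours keeps the Lindenbaum bookkeeping generic and makes the use of the $\mathsf{S5}$ axioms very local ($5$ enters only as $\Diamond\gamma\to\Box\Diamond\gamma$ in the compatibility check), while the paper's is self-contained in that it never appeals to an external extension lemma, at the cost of an ad hoc primeness argument for $\Phi$. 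The one thing you should make explicit is the pair-extension lemma itself (if no finite $\bigwedge A_0\to\bigvee B_0$ with $A_0\subseteq A$, $B_0\subseteq B$ is derivable, then some prime $\muisf$-theory contains $A$ and is disjoint from $B$): it is standard, and its Zorn-style proof is essentially what the paper inlines, but since the paper does not state it you should either cite it precisely or include its short proof.
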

\begin{proof}
    Suppose $\Gamma\equiv_c\Delta\preceq_c\Sigma$.
    By hypothesis, $\Delta\subseteq \Gamma^\Diamond$, $\Gamma^\Box\subseteq\Delta$, and $\Delta\subseteq \Sigma$.

    Let $\Upsilon$ be the closure of $\Gamma \cup \{\Diamond\varphi \mid \varphi\in\Sigma\}$ under $\mathbf{MP}$.
    We first show that, if $\Box\varphi$ is provable formulas in $\Upsilon$, then $\varphi\in\Sigma$.
    There are $\{\psi_i\}_{i<m}\subseteq \Gamma$ and $\{\chi_i\}_{i<n}\subseteq \Sigma$ such that $\isf$ proves $\bigwedge_{j<n}\Diamond\chi_j \land \bigwedge_{i<m}\psi_i \to \Box \varphi$. By $\mathbf{Nec}$ and $K$, $\isf$ proves $\bigwedge_{j<n}\Box\Diamond\chi_j \to \Box( \bigwedge_{i<m}\psi_i \to \Box \varphi)$ and $\bigwedge_{j<n}\Box\Diamond\chi_j \to (\Diamond\bigwedge_{i<m}\psi_i \to \Diamond\Box \varphi)$.
    Since each $\chi_j$ is in $\Sigma$, so is $\Box\Diamond\chi_j$, by $T$ and $5$ along with $\mathbf{MP}$.
    Since $\bigwedge_{i<m}\psi_i\in\Gamma$, $\Diamond\bigwedge_{i<m}\psi_i\in\Delta$, and thus $\Diamond\bigwedge_{i<m}\psi_i\in\Delta$ too.
    By repeated applications of $\mathbf{MP}$, we have $\Diamond\Box\varphi\in\Sigma$.
    By $5$ and $T$, we have $\varphi\in\Sigma$.

    By an application of Zorn's Lemma, there is a maximal set $\Phi$ such that: $\Phi$ is a consistent set of formulas containing $\Upsilon$; $\Phi$ closed under $\mathbf{MP}$; and $\Box\varphi\in\Phi$ implies $\varphi\in\Sigma$.
    Suppose $\varphi\lor\psi\in\Phi$.
    By $T$ and $5$, $\Box\Diamond(\varphi\lor\psi)\in\Phi$, and so $\Diamond(\varphi\lor\psi)$.
    Since $\isf\vdash DP$ and $\Sigma$ is a theory, at least one of $\Diamond\varphi$ and $\Diamond\psi$ is in $\Sigma$.
    Suppose both $\Phi\cup\{\varphi\}$ and $\Phi\cup\{\psi\}$ are inconsistent.
    Then $\bot\in\Phi$, thus $\Box\Diamond\bot\in\Phi$, and so $\Diamond\bot\in\Sigma$; this is a contradiction.
    So at least one of $\Phi\cup\{\varphi\}$ and $\Phi\cup\{\psi\}$ is consistent.

    If $\Phi\cup\{\varphi\}$ is consistent and $\Diamond\varphi\in\Sigma$, we can show by the same argument as the paragraph above that, if $\Box\chi$ is a consequence of $\Phi\cup\{\varphi\}$, then $\chi$ is a consequence of $\Sigma$.
    Therefore the closure of $\Phi\cup\{\varphi\}$ under $\mathbf{MP}$ is a subset of $\Phi$, so $\varphi\in\Phi$ from the beginning.
    Similarly, if $\Phi\cup\{\psi\}$ is consistent and $\Diamond\psi\in\Sigma$, then $\psi\in\Phi$.
    If $\Phi\cup\{\varphi\}$ is inconsistent and $\Diamond\psi\not\in\Sigma$, we get that $\Diamond\bot\in\Sigma$; and so $\bot\in\Sigma$ by $N$, a contradiction.
    $\Phi\cup\{\psi\}$ is inconsistent and $\Diamond\varphi\not\in\Sigma$ give a similar contradiction.
    Therefore either $\varphi\in\Phi$ or $\psi\in\Phi$.
    Therefore $\Phi$ is a $\isf$-theory.

    Trivially, $\Gamma\subseteq\Phi$ and so $\Gamma\preceq_c\Phi$.
    If $\Box\varphi\in\Phi$, then $\varphi\in\Sigma$ by the construction of $\Phi$.
    If $\varphi\in\Sigma$ then $\Diamond\varphi\in\Upsilon\subseteq\Phi$.
    Therefore $\Phi\equiv_c\Sigma$.
    This concludes the lemma.
\end{proof}

We now have have:
\begin{lemma}
    \label{lem::mc-is-canonical}
    The canonical $\muisf$-model $M_c$ is an $\isf$-model.
\end{lemma}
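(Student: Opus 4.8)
The plan is to verify directly that $M_c$ satisfies every clause in the definition of an $\isf$-model, assembling the lemmas already established in this subsection. Recall that an $\isf$-model is a $\ck$-model $\tuple{W, W^\bot, \preceq, \equiv, V}$ in which $\equiv$ is an equivalence relation, $W^\bot = \emptyset$, and $\equiv$ is both forward and backward confluent. I would dispatch these requirements one at a time.

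First I would check the $\ck$-model conditions. The relation $\preceq_c$ is set inclusion on $\muisf$-theories, so it is trivially reflexive and transitive. For persistence, suppose $\Gamma \preceq_c \Delta$ and $\Gamma \in V_c(P)$; then $\Gamma \subseteq \Delta$ and $P \in \Gamma$, hence $P \in \Delta$, so $\Delta \in V_c(P)$. Since $W^\bot_c = \emptyset$ by construction, the clauses $W^\bot_c \subseteq V_c(P)$ and the closure of $W^\bot_c$ under $\preceq_c$ and $\equiv_c$ hold vacuously. Thus $M_c$ is a $\ck$-model.

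Next, the remaining $\isf$ conditions follow from the preceding results. By Lemma \ref{lem::equiv_is_equiv}, $\equiv_c$ is an equivalence relation, and $W^\bot_c = \emptyset$ holds by construction. By Lemma \ref{lem::equiv_is_confluent}, $\equiv_c$ is backward confluent. Finally, since $\equiv_c$ is an equivalence relation, Proposition \ref{prop::confluences-are-equivalent} shows that backward and forward confluence coincide, so $\equiv_c$ is also forward confluent. Collecting these facts gives that $M_c$ is an $\isf$-model.

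In this case there is no substantial obstacle: the genuine work---the equivalence of $\equiv_c$ and its backward confluence---is carried out in Lemmas \ref{lem::equiv_is_equiv} and \ref{lem::equiv_is_confluent}, so the present lemma is really the bookkeeping step that collects these facts and checks the routine $\ck$-model axioms. If anything requires care, it is only ensuring that \emph{every} clause of the $\isf$-model definition is accounted for, in particular that the fallible-world conditions are correctly seen to be vacuous and that forward confluence is obtained via Proposition \ref{prop::confluences-are-equivalent} rather than proved from scratch.
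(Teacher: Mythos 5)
Your proposal is correct and follows essentially the same route as the paper's proof: both check that $\preceq_c$ is a preorder and persistent, that $W^\bot_c=\emptyset$, and then invoke Lemma \ref{lem::equiv_is_equiv} for the equivalence relation and Lemma \ref{lem::equiv_is_confluent} together with Proposition \ref{prop::confluences-are-equivalent} for the confluence conditions. Your version is slightly more explicit about the vacuous fallible-world clauses, but the substance is identical.
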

\begin{proof}
    Since the subset relation $\subseteq$ is a preorder, $\preceq_c$ is a reflexive and transitive relation over $W_c$.
    $W^\bot_c$ is empty by definition.
    The relation $\equiv_c$ is an equivalence relation over $W_c$ by Lemma \ref{lem::equiv_is_equiv}
    $M_c$ also satisfies the convergence requirements by Lemma \ref{lem::equiv_is_confluent} and Proposition \ref{prop::confluences-are-equivalent}.
    It follows from the definition that $\preceq_c$ preserves the truth of propositions.
\end{proof}

With the provable collapse over $\muisf$, we can prove Truth Lemma for the canonical $\muisf$-model.
\begin{lemma}
    \label{lem::truth-lemma}
    Let $M_c := \tuple{W_c, W^\bot_c, \preceq_c,\equiv_c, V_c}$ be the canonical $\muisf$-model.
    For $\muisf$-theory $\Gamma$ and all closed $\mu$-formula $\varphi$,
    \[
        M_c,\Gamma\models \varphi \text{ iff } \varphi\in\Gamma.
    \]
\end{lemma}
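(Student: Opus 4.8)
The crucial preliminary move is to invoke Theorem \ref{thm::the_provable-collapse_for_CS5}: every $\mu$-formula is provably equivalent over $\muisf$ to a \emph{modal} formula $\varphi'$. Since $M_c$ is an $\isf$-model (Lemma \ref{lem::mc-is-canonical}) and $\muisf$ is sound (Lemma \ref{lem::soundness}), the biconditional $\varphi\leftrightarrow\varphi'$ holds at every world of $M_c$ and lies in every $\muisf$-theory $\Gamma$; hence both $M_c,\Gamma\models\varphi\iff M_c,\Gamma\models\varphi'$ and $\varphi\in\Gamma\iff\varphi'\in\Gamma$. This reduces the Truth Lemma for arbitrary $\mu$-formulas to the Truth Lemma for \emph{modal} formulas, so I would carry out the induction only over fixed-point-free formulas, where the fixed-point operators never appear and the standard intuitionistic-modal machinery applies.

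\textbf{First I would handle the propositional base and the intuitionistic connectives.} The atomic case $M_c,\Gamma\models P\iff P\in\Gamma$ is the definition of $V_c$. The cases $\land$, $\lor$ use that each $\Gamma$ is a theory closed under $\mathbf{MP}$ with the disjunction property (for $\lor$). The $\to$ and $\neg$ cases are the familiar intuitionistic ones: for $\psi\to\theta$, the forward direction uses closure under $\mathbf{MP}$, and the backward direction (if $\psi\to\theta\notin\Gamma$, build a $\preceq_c$-successor containing $\psi$ but not $\theta$) requires a Lindenbaum-style extension lemma producing a $\muisf$-theory $\Delta\supseteq\Gamma\cup\{\psi\}$ with $\theta\notin\Delta$. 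I would state and use such an extension lemma; it is routine given the disjunction-property requirement in the definition of $\muisf$-theory.

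\textbf{The modal cases are the main obstacle, and they are where the canonical relations do their work.} For $\Box\psi$: the easy direction shows $\Box\psi\in\Gamma$ forces $\psi$ at every world $\preceq_c;\equiv_c$-reachable from $\Gamma$, using $\Gamma^\Box\subseteq\Delta$ whenever $\Gamma\preceq_c\cdot\equiv_c\Delta$ together with the induction hypothesis. The hard direction shows that $\Box\psi\notin\Gamma$ yields a witnessing world where $\psi$ fails; this needs a construction (parallel to the existence argument inside Lemma \ref{lem::equiv_is_confluent}) producing a theory $\Delta$ with $\Gamma\preceq_c;\equiv_c\Delta$ and $\psi\notin\Delta$. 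Dually, for $\Diamond\psi$: if $\Diamond\psi\in\Gamma$ one must, for \emph{every} $\preceq_c$-successor $\Gamma'\succeq_c\Gamma$, find an $\equiv_c$-related $\Delta$ containing $\psi$; here backward confluence (Lemma \ref{lem::equiv_is_confluent}) is essential to transport the diamond-witness along $\preceq_c$, and axioms $T$, $4$, $5$, $N$, $DP$ are used exactly as in Lemma \ref{lem::equiv_is_equiv} and Lemma \ref{lem::equiv_is_confluent} to guarantee the witnessing theory exists. I expect the delicate point to be matching the intuitionistic clause ``for all $\Gamma'\succeq_c\Gamma$ there is a witness'' with the canonical construction, which is precisely what confluence is designed to supply.

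\textbf{Finally I would assemble the pieces:} the induction establishes the lemma for all modal formulas, and the provable-collapse reduction at the start lifts it to all closed $\mu$-formulas, completing the proof.
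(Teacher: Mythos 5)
Your proposal is correct and matches the paper's proof in all essentials: the paper likewise proves the lemma by structural induction, handling the atomic, intuitionistic, and modal cases via Zorn-style extensions of theories along the canonical relations $\preceq_c$ and $\equiv_c$, and disposing of the fixed-point cases by the provable collapse (Lemma \ref{lem::provable_collapse}) combined with soundness of $\muisf$. The only difference is organizational: you front-load the collapse to reduce everything to modal formulas before inducting, whereas the paper inducts over all $\mu$-formulas and invokes the collapse inside the $\mu X.\psi$ and $\nu X.\psi$ cases; your ordering is arguably the cleaner presentation, since the paper's justification ``by completeness for $\isf$'' in those cases is really an appeal to the already-established modal cases of the very same lemma.
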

\begin{proof}
    The proof is by structural induction on modal formulas.
    \begin{itemize}
        \item If $\varphi = P$, then the lemma holds by the definition of $M_c$.

        \item If $\varphi = \bot$, then the lemma holds by the definition of the semantics and of $W_c^\bot$.

        \item If $\varphi = \psi_1\land\psi_2$, then
        \begin{align*}
            \Gamma\models \psi_1\land\psi_2 &\text{ iff } \Gamma\models\psi_1 \text{ and } \Gamma \models \psi_2 \\
            &\text{ iff } \psi_1\in\Gamma \text{ and }\psi_2\in\Gamma \\
            &\text{ iff } \psi_1\land \psi_2\in\Gamma.
        \end{align*}

        \item If $\varphi = \psi_1\lor\psi_2$, then
        \begin{align*}
            \Gamma\models \psi_1\lor\psi_2 &\text{ iff } \Gamma\models\psi_1 \text{ or } \Gamma \models \psi_2 \\
            &\text{ iff } \psi_1\in\Gamma \text{ or }\psi_2\in\Gamma \\
            &\text{ iff } \psi_1\lor \psi_2\in\Gamma.
        \end{align*}
        Here we use that if $\psi_1\lor\psi_2\in\Gamma$ then $\psi_1\in\Gamma$ or $\psi_2\in\Gamma$, as $\Gamma$ is a $\muisf$ theory.

        \item Let $\varphi := \psi_1\to\psi_2$.

        First suppose that $\psi_1\to\psi_2\in\Gamma$.
        Let $\Delta$ be a theory such that $\Gamma\preceq_c\Delta\models\psi_1$.
        By the induction hypothesis, $\psi_1\in\Delta$.
        As $\Gamma\preceq_c\Delta$, $\psi_1\to\psi_2\in\Delta$.
        By $\mathbf{MP}$, $\psi_2\in\Delta$.
        So $\Gamma\models\psi_1\to\psi_2$.

        Now suppose that $\psi_1\to\psi_2\not\in\Gamma$.
        Take $\Upsilon$ to be the closure of $\Gamma\cup \{\psi_1\}$ under the derivation rules.
        If $\psi_2\in\Upsilon$, then there is $\chi\in \Gamma$ such that $(\chi \land \psi_1)\to \psi_2\in \muisf$.
        And so $\chi \to (\psi_1\to \psi_2)\in \muisf$.
        As $\chi\in\Gamma$, this means $\psi_1\to\psi_2\in \Gamma$, a contradiction.
        Therefore $\psi_2\not\in\Upsilon$.
        By Zorn's Lemma, we can build a theory $\Sigma$ which contains $\Upsilon$ and do not prove $\psi_2$.
        By the induction hypothesis, $\Sigma\models \psi_1$ and $\Sigma\not\models\psi_2$.
        As $\Gamma\preceq_c\Sigma$, $\Gamma\not\models\psi_1\to\psi_2$.

        \item Let $\varphi = \neg\psi$. This case follows by the equivalence between $\neg\psi$ and $\varphi\to\bot$ over intuitionistic logic.

        \item Let $\varphi = \Box\psi$.

        First suppose that $\Box\psi\in\Gamma$.
        Let $\Gamma\preceq_c\Delta\equiv_c\Sigma$.
        Then $\Box\psi\in\Delta$ and $\psi\in\Sigma$.
        By induction hypothesis, $\Sigma\models\psi$.
        So $\Gamma\models\Box\psi$.

        Now suppose that $\Box\psi\not\in\Gamma$.
        Define $\Sigma:= \Gamma^\Box$.
        By definition, $\psi\not\in\Sigma$.
        By the induction hypothesis, $\Sigma\not\models\psi$.
        Now we show that $\Gamma\equiv_c\Sigma$.
        $\Gamma^\Box\subseteq\Sigma$ follows by definition.
        Let $\theta\in\Sigma$.
        Then $\Box\theta\in\Gamma$.
        By two applications of $T$, $\Diamond\theta\in\Gamma$.
        So $\theta\in\Gamma^\Diamond$.
        So $\Gamma\equiv_c\Sigma$.
        Therefore $\Gamma\preceq_c\Gamma\equiv_c\Sigma\not\models\psi$, and thus $\Gamma\not\models\Box\psi$.

        \item Let $\varphi = \Diamond\psi$.

        First suppose that $\Diamond\psi\in\Gamma$.
        Let $\Delta$ be a theory such that $\Gamma\preceq_c\Delta$.
        Furthermore, suppose $\Delta$ is consistent.
        Let $\Upsilon$ be the closure under derivation rules of $\Delta^\Box\cup\{\psi\}$.
        $\Upsilon^\Box\subseteq\Sigma$ holds by definition.
        Let $\theta\in\Sigma$, then $\chi\land \psi \to \theta\in \muisf$ for some $\chi\in\Upsilon^\Box$.
        Thus $\chi\to (\psi\to\theta)\in \muisf$ and $\Box\chi\to \Box(\psi\to\theta)\in \muisf$.
        So $\Box(\psi\to\theta)\in \Upsilon$.
        By $K$, $\Diamond\psi\to\Diamond\theta\in\Upsilon$.
        So $\Diamond\theta\in\Upsilon$.
        By Zorn's Lemma, there is a theory $\Sigma$ containing $\Upsilon$ such that $\Sigma\equiv_c\Delta$.
        By induction hypothesis, $\Sigma\models\psi$.
        Therefore $\Gamma\models\Diamond\psi$.

        Now suppose that $\Diamond\psi\not\in\Gamma$.
        Let $\Delta$ be such that $\Gamma\equiv_c\Delta$ and $\psi\in\Delta$.
        By the definition of $\equiv_c$, $\Delta\subseteq \Gamma^\Diamond$, so $\psi\in\Gamma^\Diamond$.
        Therefore $\Diamond\psi\in\Gamma$, a contradiction.
        We conclude that for all $\Delta$, if $\Gamma\preceq_c\Gamma\equiv_c\Delta$, then $\psi\not\in\Delta$.
        By the induction hypothesis, $\Delta\not\models\psi$.
        Therefore $\Gamma\not\models\Diamond\psi$.

        \item Let $\varphi$ be $\nu X.\psi(X)$.
        We want to show that $M_c,\Gamma\models \nu X.\psi$ iff $\nu X.\psi \in \Gamma$.

        By Lemma \ref{lem::provable_collapse}, $\nu X.\psi$ is provably equivalent to some modal formula $\varphi'$.
        So $\varphi\leftrightarrow\varphi' \in \Gamma$.
        Thus:
        \[
          \nu X.\psi\in\Gamma \iff \varphi'\in \Gamma \iff M_c,\Gamma\models \varphi' \iff M_c,\Gamma\models \nu X.\psi.
        \]
        The first equivalence holds by $\mathbf{MP}$, the second by completeness for $\isf$, and the last from the soundness of $\muisf$.

        \item Let $\varphi$ be $\nu X.\psi(X)$.
        By a proof similar to the paragraph above, we prove that $M_c,\Gamma\models \mu X.\psi$ iff $\mu X.\psi \in \Gamma$.
    \end{itemize}

    This finishes the proof of Lemma \ref{lem::truth-lemma}.
\end{proof}

\begin{proof}[Proof of Theorem \ref{thm::completeness-muisf}]
    Let $\varphi$ be a closed $\mu$-formula.
    If $\muisf$ proves $\varphi$, then $\varphi$ is true at all $\isf$-models by Lemma \ref{lem::soundness}.
    Now, suppose $\muisf$ does not prove $\varphi$.
    By Zorn's Lemma, there is an $\muisf$-theory $\Gamma$ such that $\varphi\not\in\Gamma$.
    Therefore, $\varphi$ does not hold over $\Gamma$ in the canonical model by Lemma \ref{lem::mc-is-canonical}; and so $\varphi$ is not true in all $\isf$-models.
\end{proof}

%%%%%%%%%%%%%%%%%%%%%%%%%%%%%%%%%%%%%%%%%%%%%%%%%%%%%%%%%%%%%%%
\section{Future Work}
\label{sec::future-work}
%%%%%%%%%%%%%%%%%%%%%%%%%%%%%%%%%%%%%%%%%%%%%%%%%%%%%%%%%%%%%%%
We now present some topics for research work that we are currently working on.
Most of these are centered on non-classical variants of $\mathsf{S4}$ with fixed-points.

\paragraph{Semantics for the constructive modal cube}
On one hand, the logics in the modal cube were studied by Arisaka \emph{et al.} \cite{arisaka2015nested}, who defined and proved the correctness of a nested sequent calculus them.
On the other hand, the bi-relational Kripke semantics have not been studied in general.
The author is only aware of such semantics for the constructive modal logic $\ck$ \cite{mendler2005constructive} and the constructive variant $\mathsf{CS4}$ of the modal logic $\mathsf{S4}$ \cite{alechina2001categorical,balbiani2021constructive}.
In particular, bi-relational semantics of the constructive variant $\mathsf{CS5}$ of $\mathsf{S5}$ has not been studied yet.

\paragraph{Completeness results}
The collapse over non-classical variants of $\mathsf{S5}$ greatly simplifies the proof of the completeness of the logic $\muisf$.
This collapse is not available for variants of $\mathsf{S4}$.
We conjecture that the completeness of non-classical variants of $\mathsf{S4}$ may be proved by combining the methods in Balbiani \emph{et al.} \cite{balbiani2021constructive} and Baltag \emph{et al.} \cite{baltag2023topological}.
The collapse is also not useful in the multimodal case: over multimodal $\mathsf{S5}$, the $\mu$-calculus' alternation hierarchy is strict \cite{pacheco2024fusions}.
This leaves the completeness of multimodal non-classical variants of $\mathsf{S5}$ open.
For an overview of multimodal logics, see \cite{kurucz2007Combining,carnielli2020Combining}.

\paragraph{Alternation hierarchy}
The Kripke models and $\mu$-formulas witnessing the $\mu$-calculus' alternation hierarchy strictness are also constructive Kripke models and constructive $\mu$-formulas.
Therefore the constructive $\mu$-calculus' alternation hierarchy is also strict.
Similarly, the $\mu$-calculus does not collapse to modal logic over non-classical variants of $\mathsf{S4}$.
We plan to investigate whether it collapses to its alternation-free hierarchy in a future paper.
Do note that the proofs of the collapse to the alternation-free hierarchy over $\mathsf{S4}$ found in \cite{alberucci2009modal} and \cite{dagostino2010transitive} do not translate to the constructive $\mu$-calculus.

\paragraph{Decidability results}
The decidability of constructive and intuitionistic variants of $\mathsf{S4}$ were settled recently by Balbiani \emph{et al.} \cite{balbiani2021constructive} and Girlando \emph{et al.} \cite{girlando2023intuitionistic}, respectively.
It is also known that the extension of $\mathsf{S4}$ by fixed-points is decidable, by the work of Baltag \emph{et al.} \cite{baltag2023topological}.
These results suggest that constructive and intuitionistic variants of $\mathsf{S4}$ with fixed-point operators might also be decidable.

%%%%%%%%%%%%%%%%%%%%%%%%%%%%%%%%%%%%%%%%%%%%%%%%%%%%%%%%%%%%%%%
% Bibliography
%%%%%%%%%%%%%%%%%%%%%%%%%%%%%%%%%%%%%%%%%%%%%%%%%%%%%%%%%%%%%%%

\bibliographystyle{alphaurl}
\bibliography{intuitionistic-mu-calculus}

\end{document}